\newtheorem{Theorem}{Theorem}[section]
\newtheorem{Definition}[Theorem]{Definition}
\newtheorem{Proposition}[Theorem]{Proposition}
\newtheorem{Lemma}[Theorem]{Lemma}
\newtheorem{Remark}[Theorem]{Remark}
\newtheorem{Example}[Theorem]{Example}
\newtheorem{Hypothesis}{Hypothesis}
\numberwithin{equation}{section}
\begin{document}

\def\le{\left}
\def\r{\right}
\def\cost{\mbox{const}}
\def\a{\alpha}
\def\d{\delta}
\def\ph{\varphi}
\def\e{\epsilon}
\def\la{\lambda}
\def\si{\sigma}
\def\La{\Lambda}
\def\B{{\cal B}}
\def\A{{\mathcal A}}
\def\L{{\mathcal L}}
\def\O{{\mathcal O}}
\def\bO{\overline{{\mathcal O}}}
\def\F{{\mathcal F}}
\def\K{{\mathcal K}}
\def\H{{\mathcal H}}
\def\D{{\mathcal D}}
\def\C{{\mathcal C}}
\def\M{{\mathcal M}}
\def\N{{\mathcal N}}
\def\G{{\mathcal G}}
\def\T{{\mathcal T}}
\def\R{{\mathbb R}}
\def\I{{\mathcal I}}

\def\bw{\overline{W}}
\def\phin{\|\varphi\|_{0}}
\def\s0t{\sup_{t \in [0,T]}}
\def\lt{\lim_{t\rightarrow 0}}
\def\iot{\int_{0}^{t}}
\def\ioi{\int_0^{+\infty}}
\def\ds{\displaystyle}
\def\pag{\vfill\eject}
\def\fine{\par\vfill\supereject\end}
\def\acapo{\hfill\break}

\def\beq{\begin{equation}}
\def\eeq{\end{equation}}
\def\barr{\begin{array}}
\def\earr{\end{array}}
\def\vs{\vspace{.1mm}   \\}
\def\rd{\reals\,^{d}}
\def\rn{\reals\,^{n}}
\def\rr{\reals\,^{r}}
\def\bD{\overline{{\mathcal D}}}
\newcommand{\dimo}{\hfill \break {\bf Proof - }}
\newcommand{\nat}{\mathbb N}
\newcommand{\E}{\mathbb E}
\newcommand{\Pro}{\mathbb P}
\newcommand{\com}{{\scriptstyle \circ}}
\newcommand{\reals}{\mathbb R}

\title{Pathwise uniqueness for stochastic reaction-diffusion equations in Banach
spaces with an H\"{o}lder drift component\thanks{ {\em Key words and phrases:} Stochastic reaction-diffusion equations, Kolmogorov equations in infinite dimension, pathwise uniqueness.}}

\author{Sandra Cerrai\thanks{Partially supported by the NSF grant DMS0907295 ``Asymptotic Problems for SPDE's''.}
\\
\normalsize University of Maryland, College Park, USA
\and
Giuseppe Da Prato\\
\normalsize Scuola Normale Superiore,  Pisa, Italy
\and
Franco Flandoli\\
\normalsize Universit\`a di Pisa, Italy
}

\date{}

\maketitle

\begin{abstract}
We prove pathwise uniqueness for an abstract  stochastic reaction-diffusion
equation in Banach spaces. The drift contains a bounded H\"{o}lder term; in
spite of this, due to the space-time white noise it is possible to prove
pathwise uniqueness. The proof is based on a detailed analysis of the
associated Kolmogorov equation. The model includes examples not covered by the
previous works based on Hilbert spaces or concrete SPDEs.

\end{abstract}

We prove pathwise uniqueness for a general class of reaction-diffusion
equations in Banach spaces with an H\"{o}lder drift component, of the form%
\[
\left\{
\begin{array}
[c]{l}%
dX(t)=[AX(t)+F(X(t))+B(X(t))]dt+dw(t)\\
\\
X(0)=x.
\end{array}
\right.
\]
Here $A$ is the Laplacian operator in the 1-dimensional space domain $\left[
0,1\right]  $ with Dirichlet or Neumann boundary conditions, the Banach space
$E$ is the closure of $D(A)$ in $C\left(  \left[  0,1\right]  \right)  $,
$x\in E$, $F$ is a very general reaction-diffusion operator in $E$ which
covers the usual polynomial nonlinearities with odd degree, having strictly negative leading coefficient, 
$B:E\rightarrow E$ is only H\"{o}lder continuous and bounded, $w\left(
t\right)  $ is a space-time white noise. See the next section for more details,
in particular about the assumptions on $F$.

For finite dimensional stochastic differential equations it is well known that
additive non degenerate noise leads to pathwise uniqueness in spite of the
poor regularity of the drift (see \cite{V81}, \cite{KR05} among others). Due
to a number of relevant open problems of uniqueness for PDEs, there is intense
research activity to understand when noise improves uniqueness in infinite
dimensions (see \cite{F11} for a review). Our result, which applies to a large
class of systems of interest for applications, contributes to this research direction.

The present paper is the first one dealing with the problem of pathwise uniqueness in  Banach spaces instead of Hilbert spaces.
This extension introduces many difficulties and does not represent a mere generalization of the previous cases studied in the existing literature. We treat here the concrete case of the
Banach space $E=C\left(  \left[  0,1\right]  \right)  $ or $E=C_{0}\left(
\left[  0,1\right]  \right)  $ (depending on the boundary conditions). A
typical tool in Hilbert spaces is the finite dimensional projection or
approximation by means of the elements of an orthonormal basis. Here we
implement the idea recently developed in \cite{CD12} of using an orthonormal
basis of the Hilbert space $L^{2}\left(  0,1\right)  $ made of elements which
belong to $E$. This method allows to perform certain finite dimensional
approximations and in particular to write It\^{o} formulae for certain
quantities;\ the control of many terms is often nontrivial but successful.

This paper is, in a sense, the generalization of \cite{DF10} to Banach spaces
(see also \cite{DFPR12} on bounded measurable drift and the work in finite
dimensions \cite{FGP10} where part of the technique was developed in order to
construct stochastic flows). From the viewpoint of examples, this
generalization is relevant. Both the reaction-diffusion term $F$ and the
H\"{o}lder term $B$ are not covered by \cite{DF10} except for particular
cases. One could na\"{\i}vely think that it is sufficient to apply a cut-off
and reduce (locally in time) reaction diffusion to the Hilbert set-up but it
is not so: a cut-off of the form $\varphi\left(  \left\Vert x\right\Vert
_{L^{2}}\right)  $ does not make a polynomial $x^{n}$ locally Lipschitz in
$L^{2}$. Concerning the H\"{o}lder term $B$, there are examples in $E$ which
are not even defined as operators on $L^{2}\left(  0,1\right)  $, see section
\ref{subsection examples}.

Before the more recent works (the present one and the other mentioned above)
on pathwise uniqueness for abstract stochastic evolution equations in Hilbert
or Banach spaces, there have been several important works on one-dimensional
SPDEs of parabolic type driven by space-time white noise, with several levels
of generality of the drift term, see \cite{GP93}, \cite{G98}, \cite{GN99},
\cite{GM01}, \cite{AG01}. These works remain highly competitive with the
abstract ones, and sometimes more general, but conversely the abstract works
cover examples not treated there. Concerning reaction-diffusion, some examples
are included in these previous works but not in the generality treated here
and moreover, the abstract nature of the H\"{o}lder term $B$ allows us to
treat new examples, like those of section \ref{subsection examples}.

Finally, we want to stress that this paper contains, for the purpose of
pathwise uniqueness, a detailed analysis of the Kolmogorov equation associated
to the SPDE above. These results may have other applications and also an
intrinsic interest for infinite dimensional analysis. The Kolmogorov equation
associated to reaction-diffusion equation has been investigated in \cite{C01},
\cite{C03}, \cite{CD12} and related works. In our work here we add new
informations. First, an improved analysis of second derivatives is given,
needed to control one of the terms which appears in the reformulated evolution
equation (one of the main points for the proof of pathwise uniqueness).
Second, a vectorial form of the Kolmogorov equation is discussed, again needed
in this particular approach to pathwise uniqueness. Third, the classical case
of the Kolmogorov equation with reaction diffusion term $F$ has been extended
to cover also the H\"{o}lder operator $B$.

\subsection{Examples\label{subsection examples}}

Let $E=C\left(  \left[  0,1\right]  \right)  $, $H=L^{2}\left(  0,1\right)  $.
We give two examples of maps $B:E\rightarrow E$ which are not well defined as
maps from $H$ to $H$, and are of class%
\[
B\in C_{b}^{\alpha}\left(  E,E\right)  .
\]
This shows that our theory has more applications than the previous works.

\begin{Example}
{\em 
Given $g\in E$, $\xi_{0}\in\left[  0,1\right]  $, $b\in C_{b}^{\alpha}\left(
\mathbb{R},\mathbb{R}\right)  $ such that
\[
\left\vert b\left(  s\right)  -b\left(  s^{\prime}\right)  \right\vert \leq
M\left\vert s-s^{\prime}\right\vert ^{\alpha}
\]
set%
\[
B\left(  x\right)  \left(  \xi\right)  =b\left(  x\left(  \xi_{0}\right)
\right)  g\left(  \xi\right),\ \ \ \ x \in\, E.
\]
Then $B\in C_{b}^{\alpha}\left(  E,E\right)  $. Indeed,%
\[\begin{array}{l}
\ds{| B\left(  x\right)  -B\left(  x^{\prime}\right) | _{E}  
=\max_{\xi\in\left[  0,1\right]  }\left\vert b\left(  x\left(  \xi_{0}\right)
\right)  g\left(  \xi\right)  -b\left(  x^{\prime}\left(  \xi_{0}\right)
\right)  g\left(  \xi\right)  \right\vert }\\
\vs
\ds{
=\left\vert b\left(  x\left(  \xi_{0}\right)  \right)  -b\left(  x^{\prime
}\left(  \xi_{0}\right)  \right)  \right\vert | g| _{E}
 \leq M\left\vert x\left(  \xi_{0}\right)  -x^{\prime}\left(  \xi_{0}\right)
\right\vert ^{\alpha}|g| _{E}\leq M\left\Vert g\right\Vert
_{E}|x-x^{\prime}| _{E}^{\alpha}.}
\end{array}\]

}

\end{Example}

\begin{Example}
{\em 
Given $b$ as above, set%
\[
B\left(  x\right)  \left(  \xi\right)  =b\left(  \max_{s\in\left[
0,\xi\right]  }x\left(  s\right)  \right)  .
\]
Then $B\in C_{b}^{\alpha}\left(  E,E\right)  $. Indeed,%
\[\begin{array}{l}
\ds{|B\left(  x\right)  -B\left(  x^{\prime}\right) | _{E}  
=\max_{\xi\in\left[  0,1\right]  }\left\vert b\left(  \max_{s\in\left[
0,\xi\right]  }x\left(  s\right)  \right)  -b\left(  \max_{s\in\left[
0,\xi\right]  }x^{\prime}\left(  s\right)  \right)  \right\vert }\\
\vs
\ds{ \leq M\max_{\xi\in\left[  0,1\right]  }\left\vert \max_{s\in\left[
0,\xi\right]  }x\left(  s\right)  -\max_{s\in\left[  0,\xi\right]  }x^{\prime
}\left(  s\right)  \right\vert ^{\alpha}.}
\end{array}\]
Now, one has
\begin{equation}
\left\vert \max_{s\in\left[  0,\xi\right]  }x\left(  s\right)  -\max
_{s\in\left[  0,\xi\right]  }x^{\prime}\left(  s\right)  \right\vert \leq
\max_{s\in\left[  0,\xi\right]  }\left\vert x\left(  s\right)  -x^{\prime
}\left(  s\right)  \right\vert \label{max ineq}%
\end{equation}
Indeed, assume that 
\[\max_{s\in\left[  0,\xi\right]  }x\left(  s\right)
\geq\max_{s\in\left[  0,\xi\right]  }x^{\prime}\left(  s\right).\]
 Let
$s_{M},s_{M}^{\prime}\in\left[  0,\xi\right]  $ be two points such that
\[\max_{s\in\left[  0,\xi\right]  }x\left(  s\right)  =x\left(  s_{M}\right),\ \ \ \ 
\max_{s\in\left[  0,\xi\right]  }x^{\prime}\left(  s\right)  =x^{\prime
}\left(  s_{M}^{\prime}\right).\] We have%
\[
x^{\prime}\left(  s_{M}^{\prime}\right)  \geq x^{\prime}\left(  s_{M}\right)
\]
and thus%
\[\begin{array}{l}
\ds{\max_{s\in\left[  0,\xi\right]  }x\left(  s\right)  -\max_{s\in\left[
0,\xi\right]  }x^{\prime}\left(  s\right) =x\left(  s_{M}\right)
-x^{\prime}\left(  s_{M}^{\prime}\right)  \leq x\left(  s_{M}\right)
-x^{\prime}\left(  s_{M}\right)}\\
\vs
\ds{
\leq\max_{s\in\left[  0,\xi\right]  }\left(  x\left(  s\right)  -x^{\prime
}\left(  s\right)  \right)  \leq\max_{s\in\left[  0,\xi\right]  }\left\vert
x\left(  s\right)  -x^{\prime}\left(  s\right)  \right\vert .}
\end{array}\]
We arrive to the same conclusion if $\max_{s\in\left[  0,\xi\right]  }x\left(
s\right)  \leq\max_{s\in\left[  0,\xi\right]  }x^{\prime}\left(  s\right)
$.Therefore we have proved (\ref{max ineq}). We apply it to the estimates
above and get%
\[\begin{array}{l}
\ds{| B\left(  x\right)  -B\left(  x^{\prime}\right)  | _{E}  
\leq M\max_{\xi\in\left[  0,1\right]  }\left(  \max_{s\in\left[  0,\xi\right]
}\left\vert x\left(  s\right)  -x^{\prime}\left(  s\right)  \right\vert
\right)  ^{\alpha}}\\
\vs
\ds{=M\max_{\xi\in\left[  0,1\right]  }\max_{s\in\left[  0,\xi\right]
}\left\vert x\left(  s\right)  -x^{\prime}\left(  s\right)  \right\vert
^{\alpha}=M\,|x-x^{\prime}| _{E}^{\alpha}.}
\end{array}\]
The proof is complete.}
\end{Example}

\begin{Example}
{\em With minor adjustments the same result holds for%
\[
B\left(  u\right)  \left(  \xi\right)  =b\left(  \max_{s\in\left[
0,\xi\right]  }\left\vert u\left(  s\right)  \right\vert \right)  .
\]}

\end{Example}

\begin{Remark}
{\em On the contrary, the example%
\[
B\left(  u\right)  \left(  \xi\right)  =b\left(  u\left(  \xi\right)  \right)
\]
is also of class $B\in C_{b}^{\alpha}\left(  H,H\right)  $ and thus it is
covered by the previous theories. Indeed,
\[\begin{array}{l}
\ds{\left\Vert B\left(  u\right)  -B\left(  u^{\prime}\right)  \right\Vert
_{H}^{2}   =\int_{0}^{1}\left\vert b\left(  u\left(  \xi\right)  \right)
-b\left(  u^{\prime}\left(  \xi\right)  \right)  \right\vert ^{2}d\xi}\\
\vs
\ds{\leq M^{2}\int_{0}^{1}\left\vert u\left(  \xi\right)  -u^{\prime}\left(
\xi\right)  \right\vert ^{2\alpha}d\xi \leq M^{2}\left(  \int_{0}^{1}\left\vert u\left(  \xi\right)  -u^{\prime
}\left(  \xi\right)  \right\vert ^{2}d\xi\right)  ^{\alpha}=M^{2}\left\Vert
u-u^{\prime}\right\Vert _{H}^{2\alpha}.}
\end{array}\]}

\end{Remark}

\subsection{Notations}

Let $X$ and $Y$ be two separable Banach spaces. In what follows, we shall denote by $B_b(X,Y)$ the Banach space of bounded Borel function $\varphi:X\to Y$, endowed with the sup-norm
 \[\|\varphi\|_{B_b(X,Y)}:=\sup_{x \in\,X}|\varphi(x)|_Y,\]
and by $C_b(X,Y)$ the subspace of uniformly continuous mappings.  $\text{Lip}_{b}(X,Y)$ is the subspace of Lipschitz-continuous mappings, endowed with the norm
\[\|\varphi\|_{{\text{\tiny Lip}_b}(X,Y)}:=\|\varphi\|_{C_b(X,Y)}+\sup_{x,y\in E,\,x\neq y}\frac{|\varphi(x)-\varphi(y)|_Y}{|x-y|_X}=:\|\varphi\|_{C_b(X,Y)}+[\varphi]_{{\text{\tiny Lip}_b}(X,Y)}.\]
For any $\theta \in\,(0,1)$, we denote by $C^\theta_b(X,Y)$ the Banach space of all $\theta$-H\"older continuous mappings $\varphi\in C_b(X,Y)$, endowed with the norm
$$
\|\varphi\|_{C^\theta(X,Y)}=\|\varphi\|_{C_b(X,Y)}+\sup_{x,y\in X,\,x\neq y}\frac{|\varphi(x)-\varphi(y)|_Y}{|x-y|_X^\theta}.
$$

Finally, for any integer $k\geq1$, we denote by $C^k_b(X,Y)$ the space of all mappings $\varphi:X\to Y$ which are $k$ times differentiable, with uniformly continuous and bounded derivatives. $C^k_b(X,Y)$ is a Banach space, endowed with the norm
 \[\|\varphi\|_{C^k_b(X,Y)}=:\|\varphi\|_{C_b(X,Y)}+\sum_{j=1}^k\sup_{x \in\,X}\|D^j\varphi(x)\|_{{\cal L}^j(X,Y)}.\]
Spaces $C^{\theta+k}_b(X,Y)$, with $k\in\mathbb{N}$ and $\theta \in\,(0,1)$, are defined similarly.

Finally,  when $Y=\reals$, we shall denote $B_b(X,Y)$ and $C_b^{\theta+k}(X,Y)$, for $\theta \in\,[0,1]$ and $k \in\,\nat$, by $B_b(X)$ and $C_b^{\theta+k}(X)$, respectively.

\bigskip

\section{The unperturbed reaction-diffusion equation} 
\label{sec2}
 We are here concerned with the following stochastic reaction--diffusion equation in the Banach space $C([0,1])$,
\begin{equation}
\label{e1.1}
\left\{\begin{array}{lll}
dX(t,\xi)=[D^2_\xi X(t,\xi)+f(\xi,X(t,\xi))]dt+dw(t,\xi),\quad \xi\in (0,1),\\
\\
{\cal B}X(t,0)={\cal B}X(t,1)=0,\quad t\ge 0,\\
\\
X(0,\xi)=x(\xi),\quad \xi\in [0,1],
\end{array}\right.
\end{equation}
where $b:[0,1]\times \R\to\R$ is a given function, $w(t)$  is a cylindrical Wiener process in $L^2(0,1)$, defined on a filtered
probability space
$(\Omega,\mathcal F, (\mathcal F_t)_{t\ge 0},\Pro)$, and  either ${\cal B}u=u$ (Dirichlet boundary condition) or ${\cal B}u=u^\prime$ (Neumann boundary condition).

If we denote by $A$ the  realization in $C([0,1])$ of the operator $D^2_\xi$, endowed with the boundary condition ${\cal B}$,
and if we denote by $F$ the Nemytski operator associated with $f$, namely
\[F(x)(\xi)=f(\xi,x(\xi)),\ \ \ \ x \in\,C([0,1]),\ \ \ \xi \in\,[0,1],\]
then problem \eqref{e1.1} can be written as the following stochastic differential equation
  in $C([0,1])$
\begin{equation}
\label{e1.3}
\left\{\begin{array}{lll}
dX(t)=[A X(t)+F(X(t))]dt+dw(t),\\
\\
X(0)=x.
\end{array}\right.
\end{equation}

In what follows, we shall denote by $H$ the Hilbert space $L^2(0,1)$, endowed with the scalar product $\langle\cdot,\cdot\rangle_H$ and the corresponding norm $|\cdot|_H$.
With $E$ we shall denote the closure of $D(A)$ in the space $C([0,1])$, endowed with the uniform norm $|\cdot|_E$ and the duality $\le<\cdot,\cdot\r>_E$  between $E$ and $E^\star$. Notice that in the case of Dirichlet boundary conditions $\overline{D(A)}=C_0([0,1])$ and in the case of Neumann boundary conditions $\overline{D(A)}=C([0,1])$.  However, in both cases
the semigroup  $e^{tA}$ generated by $A$  is strongly continuous and analytic in $E$.
 Finally, for any $\e>0$ we shall denote by $E_\e$ the subspace of $\e$-H\"older continuous functions,  endowed with the norm 
\[|x|_{E_\e}:=|x|_E+\sup_{\substack{\xi,\eta \in\,[0,1]\\ \xi\neq \eta}}\frac{|x(\xi)-x(\eta)|}{|\xi-\eta|^\e_E}.\]

\medskip

In what follows, we shall assume that the mapping $f:[0,1]\times \R\to \R$ is continuous and satisfies the following conditions.
\begin{Hypothesis}
\label{H1}
\begin{enumerate}
\item For any $\xi \in\,[0,1]$, the mapping $f(\xi,\cdot):\R\to \R$ is of class $C^3$ and there exists an integer $m\geq 0$ such that
\begin{equation}
\label{dp3}
\sup_{\substack{\xi \in\,[0,1]\\s \in\,\R}}\,\frac{|D^j_s  f(\xi,s)|}{1+|s|^{2m+1-j}}<\infty,\quad j=0,1,2,3.
\end{equation}
Moreover, the mappings $D_s^jf:[0,1]\times \R\to \R$ are all continuous.
\item We have
\begin{equation}
\label{dp33}
\sup_{\substack{\xi \in\,[0,1]\\s \in\,\R}}\,Df(\xi,s)=:\rho<\infty.
\end{equation}
\item If $m\geq 1$, then there exist $\a>0$, $\gamma\geq 0$ and $c \in\,\R$ such that
\[\sup_{\xi \in\,[0,1]}\le(f(\xi,s+h)-f(\xi,s)\r)h\leq -\a h^{2(m+1)}+c\,\le(1+|s|^{\gamma}\right).\]
\end{enumerate}

\end{Hypothesis}

A simple example of a function $f$ fulfilling all conditions in Hypothesis \ref{H1}
is
\[f(\xi,s)=-\a(\xi)\,s^{2m+1}+\sum_{j=0}^{2m} c_j(\xi)\,s^j,\ \ \ \ \ (\xi,s) \in\,[0,1]\times \R,\]
for some continuous functions $\a, c_j:[0,1] \to \mathbb{R}$, with
\[\inf_{\xi \in\,[0,1]}\a(\xi)=:\a_0>0.\]

\begin{Definition}
Let $x\in E.$ We say that an adapted process $X(\cdot,x)$ is a
 {\em mild} solution
of problem \eqref{e1.1} if $X(t,x)\in E$, for all $t\ge 0$, and fulfills the integral equation
\begin{equation}
\label{e1.4}
X(t,x)=e^{tA}x+\int_0^te^{(t-s)A}F(X(s))ds+W_A(t),\quad t\ge 0,
\end{equation}
where $W_A(t)$ is the stochastic convolution
$$
W_A(t)=\int_0^te^{(t-s)A}dw(s),\quad t\ge 0.
$$
\end{Definition} 

In \cite[Proposition 6.2.2]{C01} is proved that, for any $x \in\,E$, problem \eqref{e1.1} admits a unique adapted {\em mild solution} $X(\cdot,x) \in\,L^p(\Omega;C([0,T];E))$, for any $T>0$ and $p\geq 1$, such that for any $t \in\,[0,T]$
\begin{equation}
\label{6.2.2}
\sup_{s \in\,[0,t]}|X(s,x)|_E\leq \Lambda(t)\,\le(1+|x|_E\r),\ \ \ \ \Pro-\text{a.s.}
\end{equation}
for some  random variable $\Lambda(t)$ such that
\[\E\,\Lambda(t)^p<\infty,\]
for any $p\geq 1$.
Moreover, in \cite[Theorem 6.2.3]{C01} is proved that for any $t>0$
\begin{equation}
\label{6.2.9}
\sup_{x \in\,E}|X(t,x)|_E\leq \Gamma(t)\,t^{-\frac 1{2m}},\ \ \ \ \ \Pro-\text{a.s.}
\end{equation}
for some random variable $\Gamma(t)$, increasing with respect to $t$,  such that
\[\E\,\Gamma(t)^p<\infty,\]
for any $p\geq 1$ and $t\geq 0$.

Notice that there exists $\e_0>0$ such that for any $x \in\,E$
\begin{equation}
\label{dp161}
X(t,x) \in\,E_{\e_0},\ \ \ \    t>0,\ \ \ \Pro-\text{a.s.}
\end{equation}
and the mapping $x \in\,E\to X(t,x) \in\,E_{\e_0}$ is continuous, $\Pro$-a.s.

Moreover, in \cite[Proposition 7.1.2]{C01} it has been proved that for any $x \in\,H$ there exists a unique {\em generalized solution} $X(\cdot,x) \in\,L^p(\Omega;C([0,T];H))$, for any $p\geq 1$ and $T>0$. This means that for any sequence $\{x_n\}_{n \in\,\nat}\subset E$ converging to $x$ in $H$, the sequence $\{X(\cdot,x_n)\}_{n  \in\,\nat}$ converges to $X(\cdot,x)$ in $C([0,T];H)$, $\Pro$-a.s. as $n\rightarrow \infty$.
Furthermore, estimates analogous to \eqref{6.2.2} and \eqref{6.2.9} hold in $H$. Namely,
\begin{equation}
\label{6.2.2H}
\sup_{s \in\,[0,t]}|X(s,x)|_H\leq \Lambda(t)\,\le(1+|x|_H\r),\ \ \ \ \Pro-\text{a.s.}
\end{equation}
and 
\begin{equation}
\label{6.2.9H}
\sup_{x \in\,H}|X(t,x)|_H\leq \Gamma(t)\,t^{-\frac 1{2m}},\ \ \ \ \ \Pro-\text{a.s.}
\end{equation}
for suitable random variables $\Lambda(t)$ and $\Gamma(t)$ as above.

In \cite[Chapter 6]{C01} the regularity of the mapping
\[x \in\,E\mapsto X(t,x) \in\,C([0,T];L^p(\Omega,E)),\]
has been studied and in Theorem 6.3.3 it has been proved that, as $f$ is assumed to be of class $C^3$, such a mapping is three times differentiable and the derivatives satisfy
\begin{equation}
\label{6.3.9}
\sup_{\substack{x \in\,E\\ t \in\,[0,T]}}|D^j_xX(t,x)(h_1,\ldots,h_r)|_E\leq \Lambda_j(T)|h_1|_E\cdots|h_r|_E,
\end{equation}
for any $r=1,2,3$, $T>0$ and $h_1,\ldots,h_r \in\,E$ and for some random variables $\Lambda_j(T)$ having  finite moments of any order.

The regularity of the mapping
\[x \in\,H\mapsto X(t,x) \in\,C([0,T];L^p(\Omega,H)),\]
has not been investigated, but in \cite[Proposition 7.2.1]{C01} it has been proved that for any $x, h \in\,H$ there exists a process $v(\cdot,x,h)$ such that for any two sequences $\{x_n\}_{n \in\,\nat}$ and $\{h_n\}_{n \in\,\nat}$, converging in $H$ to $x$ and $h$, respectively, the sequence $\{D_x X(\cdot,x_n)h_n\}_{n \in\,\nat}$ converges to $v(\cdot,x,h)$ in $C([0,T];H)$, $\Pro$-a.s.

Now, for any $x \in\,E$, $h \in\,H$  and $s\geq 0$, let us consider the problem
\begin{equation}
\label{random}
\eta^\prime(t)=A\eta(t)+F^\prime(X(t,x))\eta(t),\ \ \ \ \ \eta(s)=h,\ \ t\geq s,\ \ \ \omega \in\,\Omega.
\end{equation}
This is a random equation, whose solution is denoted by $\eta(t;s,x,h)$, and 
it defines the following random evolution operator
\begin{equation}
\label{dp225}
[U^x_{t,s}(\omega)]h=\eta(t;s,x,h)(\omega).\end{equation}

In view of Hypothesis \ref{H1}, it is immediate to check that  $U^x_{t,s}$ satisfies the following properties (the proof is left to the reader).
\begin{Lemma}
\label{l2.2}
\begin{enumerate}
\item There exists a kernel $K^x_{t,s}:\Omega\times [0,1]\times[0,1]\to \reals^+$ such that for any 
$x \in\,E$, $h \in\,H$ and $0\leq s\leq t$
\begin{equation}
\label{dp101}
U^x_{t,s}(\omega)h(\xi)=\int_0^1 K^x_{t,s}(\omega,\xi,\theta)h(\theta)\,d\theta.
\end{equation}
\item For any $(\xi,\theta) \in\,[0,1]\times[0,1]$ and $x \in\,E$, we have
\begin{equation}
\label{dp102}
0\leq K^x_{t,s}(\omega,\xi,\theta)\leq K_{t-s}(\xi,\theta)\,e^{\rho t}\leq (4\pi t)^{-\frac 12}\,e^{\rho t},\ \ \ \ 0\leq s\leq t,\ \ \ \Pro-\text{a.s.}
\end{equation}
where $\rho$ is the constant introduced in \eqref{dp33} and $K_t(\xi,\theta)$ is the kernel associated with the operator $A$.
\item The evolution operator $U^x_{t,s}$ is ultracontractive and for any $1\leq q\leq p$
\begin{equation}
\label{dp103}
|U^x_{t,s}(\omega)h|_p\leq c((t-s)\wedge 1)^{-\frac{p-q}{2pq}}|h|_q,\ \ \ \ \ t>s,\ \ \ \Pro-\text{a.s.}
\end{equation}
\end{enumerate}

\end{Lemma}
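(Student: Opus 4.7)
The plan is to view \eqref{random} as a linear parabolic equation with a random multiplicative potential $V(t,\xi,\omega):=D_s f(\xi,X(t,x)(\omega)(\xi))$. By Hypothesis \ref{H1}(2), $V\leq \rho$ uniformly in $(t,\xi,\omega)$, so the three claims are essentially a comparison with the heat semigroup generated by $A$.

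For part (1), I would iterate the Duhamel identity
\[
\eta(t)=e^{(t-s)A}h+\int_s^t e^{(t-r)A}\bigl[V(r,\cdot)\eta(r)\bigr]\,dr,
\]
getting the Neumann series
\[
\eta(t)=\sum_{n\geq 0}\int_{s\leq r_1\leq\ldots\leq r_n\leq t} e^{(t-r_n)A}V(r_n)\,e^{(r_n-r_{n-1})A}\cdots V(r_1)\,e^{(r_1-s)A}h\,dr_1\cdots dr_n.
\]
Since $V$ enters as a multiplication operator and the heat kernel $K_t(\xi,\theta)$ is nonnegative, each term has an explicit nonnegative integral kernel against $h$; using the uniform bound $V\leq\rho$ and the semigroup property of $e^{tA}$, the sum converges absolutely, yielding a nonnegative measurable kernel $K^x_{t,s}(\omega,\xi,\theta)$ satisfying \eqref{dp101}.

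For part (2), the cleanest route is a maximum principle argument. Fix $h\geq 0$ and set $\tilde\eta(t):=e^{-\rho(t-s)}\eta(t)$; then $\tilde\eta$ is the mild solution of $\partial_t\tilde\eta=A\tilde\eta+(V-\rho)\tilde\eta$ with $V-\rho\leq 0$. A Feynman--Kac (or Trotter product) representation of this Schrödinger-type semigroup with nonpositive random potential gives $0\leq \tilde\eta(t)\leq e^{(t-s)A}h$ pointwise, hence $0\leq K^x_{t,s}(\omega,\xi,\theta)\leq e^{\rho(t-s)}K_{t-s}(\xi,\theta)$. The Gaussian upper bound $K_{t-s}(\xi,\theta)\leq(4\pi(t-s))^{-1/2}$ holds for both the Dirichlet and Neumann realizations (by the method of images), and replacing $e^{\rho(t-s)}$ with $e^{\rho t}$ gives \eqref{dp102}.

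Finally, part (3) is a direct consequence of the kernel bound: by Hölder's inequality in $\theta$,
\[
|U^x_{t,s}(\omega)h(\xi)|\leq\Bigl(\int_0^1 K^x_{t,s}(\omega,\xi,\theta)^{q'}d\theta\Bigr)^{1/q'}|h|_q,
\]
and taking the $L^p$ norm in $\xi$ and invoking \eqref{dp102} reduces everything to the classical $L^q\to L^p$ smoothing estimate $\|e^{(t-s)A}\|_{L^q\to L^p}\leq c((t-s)\wedge 1)^{-\frac{p-q}{2pq}}$ for the one-dimensional heat semigroup on $[0,1]$. The truncation with $\wedge 1$ absorbs the $e^{\rho t}$ factor on bounded time intervals. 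The only mildly delicate point is justifying the pointwise comparison used in (2) when $V$ is merely measurable in $(t,\omega)$; this is handled by approximating $X(\cdot,x)$ by smoother adapted processes for which the classical maximum principle applies, and passing to the limit via the Neumann series representation from step (1).
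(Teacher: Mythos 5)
The paper gives no proof of this lemma (it is explicitly ``left to the reader''), and your argument is essentially the intended one: a Duhamel/Neumann series to produce the kernel, positivity preservation and domination of the perturbed evolution by $e^{\rho(t-s)}e^{(t-s)A}$ for \eqref{dp102}, and the resulting pointwise comparison combined with the classical $L^q\to L^p$ smoothing of the heat semigroup for \eqref{dp103}. Two points in your write-up need repair, though neither is fatal. First, the claim that each Neumann-series term ``has an explicit nonnegative integral kernel'' is false: the potential $V(t,\xi)=D_sf(\xi,X(t,x)(\xi))$ is only bounded \emph{above} by $\rho$ (via \eqref{dp33}); from below it behaves like $-\alpha(2m+1)X(t,x)(\xi)^{2m}$ and is typically very negative, so the individual terms, which contain products of the $V(r_i)$'s, are not sign-definite. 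Nonnegativity of the total kernel is exactly what your part-(2) maximum-principle/Feynman--Kac argument supplies, so you should not also assert it term by term. Second, and for the same reason, the one-sided bound $V\le\rho$ does not give absolute convergence of the series; what you need is the pathwise two-sided bound $|V(r,\xi)|\le\sup\{|D_sf(\eta,\sigma)|:\ \eta\in[0,1],\ |\sigma|\le\sup_{r\le t}|X(r,x)|_E\}$, which is a.s.\ finite because $X(\cdot,x)\in C([0,T];E)$ by \eqref{6.2.2}; the resulting constant is random, but that is all the lemma requires. With these corrections the proof is complete. The remaining caveats --- that the constant in \eqref{dp103} degrades like $e^{\rho(t-s)}$ on unbounded time intervals, and that the Gaussian bound on the Neumann heat kernel only holds for $t-s$ bounded --- are already implicit in the statement as printed and are not defects of your argument.
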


As a consequence of the previous Lemma,  the following fact holds.
\begin{Lemma} 
\label{l23}
We have
\begin{equation}
\label{dp100}
\sup_{x \in\,E}\,\sum_{i=1}^\infty |D_xX(t,x)e_i|_H^2\leq c\,e^{2\rho t}\,t^{-\frac 12},\ \ \ \ t >0,\ \ \ \ \Pro-\text{a.s.}
\end{equation}
for some constant $c>0$. Moreover, the sum converges uniformly with respect to $x \in\,E$.
\end{Lemma}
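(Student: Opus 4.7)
The plan is to recognize the sum as the squared Hilbert--Schmidt norm of the random evolution operator $U^x_{t,0}$ and to dominate it by that of the unperturbed heat semigroup $e^{tA}$, using the pointwise kernel comparison from Lemma \ref{l2.2}. I would first use the identification $D_x X(t,x) e_i = U^x_{t,0} e_i$ from \eqref{dp225}, combined with the kernel representation \eqref{dp101} and Parseval's identity applied in the variable $\theta$ to the function $\theta\mapsto K^x_{t,0}(\omega,\xi,\theta)\in L^2(0,1)$ for each fixed $\xi$; integrating in $\xi$ and using monotone convergence to swap sum and integral, this yields
\[
\sum_{i=1}^\infty |D_x X(t,x) e_i|_H^2 = \int_0^1\!\int_0^1 K^x_{t,0}(\omega,\xi,\theta)^2\, d\xi\, d\theta,\qquad \Pro\text{-a.s.}
\]

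Next, the pointwise domination \eqref{dp102}, $0\leq K^x_{t,0}(\omega,\xi,\theta)\leq e^{\rho t} K_t(\xi,\theta)$, reduces the problem to bounding the squared $L^2$-norm of the heat kernel:
\[
\sum_{i=1}^\infty |D_x X(t,x) e_i|_H^2 \leq e^{2\rho t}\int_0^1\!\int_0^1 K_t(\xi,\theta)^2\, d\xi\, d\theta.
\]
For the latter I would use either the spectral identity $\int_0^1\!\int_0^1 K_t^2 = \mathrm{Tr}(e^{2tA}) = \sum_n e^{-2\mu_n t}$ together with the Weyl asymptotics $\mu_n \asymp n^2\pi^2$, which gives $O(t^{-1/2})$ as $t\to 0^+$, or the elementary estimate
\[
\int_0^1\!\int_0^1 K_t(\xi,\theta)^2\, d\xi\, d\theta \leq \sup_{\xi,\theta} K_t(\xi,\theta)\cdot \int_0^1\!\int_0^1 K_t(\xi,\theta)\, d\xi\, d\theta \leq (4\pi t)^{-1/2}\cdot 1,
\]
using the Gaussian bound from \eqref{dp102} and the inequality $\int_0^1 K_t(\xi,\theta)\,d\theta\leq 1$, valid for both Dirichlet and Neumann boundary conditions. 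Both routes yield the bound $c\, e^{2\rho t}\, t^{-1/2}$ with a constant manifestly independent of $x$, proving the main assertion.

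For the uniform-in-$x$ convergence of the series, I would specialize $\{e_i\}$ to the eigenbasis of $A$, with $-A e_i = \mu_i e_i$, and extract a $\mu_i^{-1}$ decay on each term by integrating by parts in $\theta$:
\[
\int_0^1 K^x_{t,0}(\omega,\xi,\theta) e_i(\theta)\, d\theta = -\mu_i^{-1}\int_0^1 \partial_\theta^2 K^x_{t,0}(\omega,\xi,\theta)\, e_i(\theta)\, d\theta,
\]
where all boundary terms vanish because $K^x_{t,0}$ satisfies the same boundary conditions in both arguments (as one checks by passing to the adjoint evolution). Combining this with interior regularity of $K^x_{t,0}$ inherited from the parabolic equation \eqref{random} and controlled uniformly in $x$ via the comparison \eqref{dp102}, one obtains bounds on $|U^x_{t,0} e_i|_H^2$ that are summable in $i$ uniformly in $x$. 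I expect the main technical point to be precisely this last regularity estimate with $x$-independent constants; the Hilbert--Schmidt bound itself for the full sum is an immediate consequence of \eqref{dp102} and standard heat-kernel trace asymptotics.
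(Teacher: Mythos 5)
Your proof of the main estimate \eqref{dp100} is exactly the paper's argument: identify $D_xX(t,x)e_i=U^x_{t,0}e_i$, apply Parseval in $\theta$ to get $\sum_i|D_xX(t,x)e_i(\xi)|^2=|K^x_{t,0}(\xi,\cdot)|_H^2$, dominate by $e^{2\rho t}|K_t(\xi,\cdot)|_H^2$ via \eqref{dp102}, integrate in $\xi$, and use the $O(t^{-1/2})$ bound on $\int_0^1\int_0^1 K_t^2$. Both of your routes for that last step (spectral trace or $\sup K_t\cdot\int K_t$) are fine and the constant is manifestly $x$-independent, so this part is correct and needs no further comment.

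On the uniform convergence of the series, the paper's proof is actually silent, so you are on your own there, and your sketch is the one place where I would push back. The identity $\int_0^1 K^x_{t,0}(\omega,\xi,\theta)e_i(\theta)\,d\theta=-\mu_i^{-1}\int_0^1\partial_\theta^2K^x_{t,0}(\omega,\xi,\theta)e_i(\theta)\,d\theta$ requires $e_i$ to be the eigenbasis of $A$ (fine, but note $\mu_1=0$ in the Neumann case, so the first mode must be treated separately), and more seriously it requires quantitative control of $\partial_\theta^2K^x_{t,0}$, i.e.\ second-order regularity in the \emph{backward} variable of the kernel of the randomly perturbed evolution \eqref{random}, with constants uniform in $x$. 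The comparison \eqref{dp102} only bounds $K^x_{t,0}$ itself, not its derivatives, and the potential $F'(X(\cdot,x))$ is merely continuous in time, so this regularity is not free; you correctly flag it as the main technical point but leave it unproven, which is a genuine gap in that half of the argument. A cheaper alternative that stays within the paper's toolbox is to write $U^x_{t,0}e_i=U^x_{t,t/2}\,U^x_{t/2,0}e_i$, use the ultracontractivity \eqref{dp103} to control $U^x_{t,t/2}$ in $\L(H)$ uniformly in $x$ (and $\omega$), and reduce the tail $\sum_{i\ge n}|U^x_{t,0}e_i|_H^2$ to a tail involving the deterministic comparison kernel, where the decay in $i$ can be extracted from $K_{t/2}$ alone.
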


\begin{proof}
We have
$D_xX(t,x)e_i=U^x_{t,0}e_i$, hence, due to \eqref{dp101}, we have
\[\begin{array}{l}
\ds{\sum_{i=1}^\infty|D_xX(t,x)e_i(\xi)|^2=\sum_{i=1}^\infty\le|\le<K^x_{t,0}(\xi,\cdot),e_i\r>_H\r|^2=|K^x_{t,0}(\xi,\cdot)|_H^2\leq |K_{t}(\xi,\cdot)|_H^2 e^{2\rho t}.}\end{array}\]
This implies that for any $t>0$
\[\begin{array}{l}
\ds{\sum_{i=1}^\infty|D_xX(t,x)e_i|_H^2\leq \int_0^1|K_{t}(\xi,\cdot)|_H^2\,d\xi\, e^{2\rho t}\leq c\,e^{2\rho t}\,t^{-\frac 12}.}\end{array}\]

\end{proof}

\begin{Remark}
{\em  Due to \eqref{dp103}, for any $1\leq p\leq q\leq \infty$ we have
\begin{equation}
\label{dp120}
|D_xX(t,x)v|_p\leq c\,(t\wedge 1)^{-\frac{p-q}{2pq}}|v|_q.
\end{equation}
In particular, if $x,y \in\,E$, we have
\[|X(t,x)-X(t,y)|_H\leq \int_0^1|D_xX(t,\theta x+(1-\theta)y)(x-y)|_H\,d\theta\leq c(t) |x-y|_H.\]
Recalling how the generalized solution $X(t,x)$ has been constructed in $H$, this implies that for any $x,y \in\,H$ and $t\geq 0$
\begin{equation}
\label{dp123}
|X(t,x)-X(t,y)|_H\leq  c(t) |x-y|_H.
\end{equation}
}
\end{Remark}

\medskip

\begin{Lemma}
\label{dp110}
For any $x,h \in\,E$ and $t\geq 0$, we have
\begin{equation}
\label{dp109}
\sum_{i=1}^\infty |D^2_xX(t,x)(e_i,e_i)|_H\leq \kappa(t)\,\le(1+|x|_E^{2m-1}\r)\,(t\wedge 1)^{\frac 14},\ \ \ \ \Pro-\text{a.s.}
\end{equation}
for some positive random variable $\kappa(t)$, increasing with respect to $t\geq0$, having all moments finite.
Moreover,  the sum converges uniformly with respect to $x \in\,B_E(R)$, for any $R>0$.

\end{Lemma}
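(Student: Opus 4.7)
The plan is to write $D^2_x X(t,x)(e_i,e_i)$ via variation of parameters and reduce the series to a time integral of $\sum_i|U^x_{s,0}e_i|_4^2$, which is controlled by combining the ultracontractivity \eqref{dp103} with the Hilbert--Schmidt bound \eqref{dp100}.

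First I would differentiate the mild equation \eqref{e1.4} twice in $x$. Since $D_xX(s,x)h=U^x_{s,0}h$ by \eqref{dp225}, the second variation $\zeta_i(t):=D^2_xX(t,x)(e_i,e_i)$ solves a random inhomogeneous linear equation with drift $A+F'(X(t))$ and forcing $F''(X(t))(U^x_{t,0}e_i,U^x_{t,0}e_i)$, so
\[
\zeta_i(t)=\int_0^t U^x_{t,s}\,F''(X(s,x))(U^x_{s,0}e_i,U^x_{s,0}e_i)\,ds.
\]
Because $F$ is the Nemytski operator of $f$, $F''(x)(u,v)(\xi)=f''(\xi,x(\xi))u(\xi)v(\xi)$, and \eqref{dp3} with $j=2$ gives $|f''(\xi,s)|\le C(1+|s|^{2m-1})$; hence
\[
|F''(X(s,x))(U^x_{s,0}e_i,U^x_{s,0}e_i)|_H\le C\bigl(1+|X(s,x)|_E^{2m-1}\bigr)|U^x_{s,0}e_i|_4^2.
\]
A standard energy estimate using \eqref{dp33} gives $|U^x_{t,s}|_{\mathcal L(H)}\le e^{\rho(t-s)}$, so by Tonelli
\[
\sum_{i=1}^\infty|\zeta_i(t)|_H\le C\int_0^t e^{\rho(t-s)}\bigl(1+|X(s,x)|_E^{2m-1}\bigr)\sum_{i=1}^\infty|U^x_{s,0}e_i|_4^2\,ds.
\]

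The crux is the bound on $\sum_i|U^x_{s,0}e_i|_4^2$. I would use the evolution property $U^x_{s,0}=U^x_{s,s/2}\com U^x_{s/2,0}$ and apply \eqref{dp103} with $q=2$, $p=4$ on the outer factor to obtain
\[
|U^x_{s,0}e_i|_4\le c(s\wedge 1)^{-1/8}|U^x_{s/2,0}e_i|_H.
\]
Squaring, summing over $i$, and invoking \eqref{dp100} at time $s/2$ yields the decisive bound
\[
\sum_{i=1}^\infty|U^x_{s,0}e_i|_4^2\le c\,e^{\rho s}(s\wedge 1)^{-1/4}s^{-1/2},
\]
which is integrable near $s=0$ with antiderivative of order $t^{1/4}$. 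Inserting this, controlling $\sup_{s\le t}|X(s,x)|_E$ by $\Lambda(t)(1+|x|_E)$ via \eqref{6.2.2}, and handling the regime $t\ge 1$ by absorbing the (now finite) time integral into $\kappa(t)$ to match $(t\wedge 1)^{1/4}=1$, produces \eqref{dp109} with a random variable $\kappa(t)$ of the form $Ce^{c\rho t}\bigl(1+\Lambda(t)^{2m-1}\bigr)$, which has moments of all orders. The uniform convergence on $B_E(R)$ follows because the $i$-tail of $\sum_i|U^x_{s,0}e_i|_4^2$ is dominated uniformly in $x\in E$ by the $i$-tail of $\sum_i|U^x_{s/2,0}e_i|_H^2$, which converges uniformly by the last assertion of Lemma~\ref{l23}.

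The main obstacle is precisely the uniform bound on the $L^4$-series: the pointwise kernel estimate \eqref{dp102} is uniform in $i$ but not summable, so one cannot pull the supremum through the sum. The semigroup-splitting device circumvents this by trading half the time interval for $L^2\to L^4$ ultracontractive smoothing and reducing the remaining task to the Hilbert--Schmidt sum \eqref{dp100} on the other half-step.
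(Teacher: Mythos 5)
Your proof is correct and follows essentially the same route as the paper: the variation-of-constants representation of $D^2_xX(t,x)(e_i,e_i)$, the ultracontractivity of $U^x_{t,s}$ from Lemma \ref{l2.2}, and the Hilbert--Schmidt bound \eqref{dp100} to perform the sum over $i$. The only (harmless) difference is bookkeeping: the paper measures the forcing $F''(X)(U^x_{s,0}e_i,U^x_{s,0}e_i)$ in $L^1$ and uses the $L^1\to L^2$ smoothing of the \emph{outer} propagator $U^x_{t,s}$ to produce the $((t-s)\wedge 1)^{-1/4}$ singularity, whereas you keep the forcing in $L^2$ and instead split $U^x_{s,0}=U^x_{s,s/2}\circ U^x_{s/2,0}$, using $L^2\to L^4$ smoothing on the inner half-step to place the $-1/4$ singularity at $s=0$; both resulting time integrals are $O(t^{1/4})$.
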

\begin{proof}
We have
\[D^2_xX(t,x)(e_i,e_i)=\int_0^tU^x_{t,s}F^{\prime \prime}(X(s,x))(D_x X(s,x)e_i,D_x X(s,x)e_i)\,ds.\]
Then, 
due to \eqref{6.2.2}, \eqref{dp103} and \eqref{dp120}, we have
\[\begin{array}{l}
\ds{|D^2_xX(t,x)(e_i,e_i)|_H\leq c\int_0^t((t-s)\wedge 1)^{-\frac 14}|F^{\prime \prime}(X(s,x))|_E
|D_x X(s,x)e_i|^2_H\,ds}\\
\vs
\ds{\leq c\, e^{\rho t}\int_0^t((t-s)\wedge 1)^{-\frac 14}\le(1+|X(s,x))|_E^{2m-1}\r)
|D_x X(s,x)e_i|^2_H\,ds}\\
\vs
\ds{\leq c\, e^{\rho t}\,\Lambda(t)^{2m-1}\le(1+|x|_E^{2m-1}\r)\int_0^t((t-s)\wedge 1)^{-\frac 14}
|D_x X(s,x)e_i|^2_H\,ds.}
\end{array}\]
Thanks to \eqref{dp100}, this implies
\[\begin{array}{l}
\ds{\sum_{i=1}^\infty|D^2_xX(s,x)(e_i,e_i)|_H\,ds\leq c\, e^{3\rho t}\,\Lambda(t)^{2m-1}_H\le(1+|x|_E^{2m-1}\r)\int_0^t((t-s)\wedge 1)^{-\frac 14}s^{-\frac 12}\,ds,}
\end{array}\]
and \eqref{dp109} follows.

\end{proof}

\begin{Remark}
\label{rem2.6}
{\em Let $J_n=nR(n,A)$. Then, from the proof above, we have also that 
\begin{equation}
\label{dp108}
\sum_{i=1}^\infty |D^2_xX(t,J_nx)(J_ne_i,J_ne_i)|_H\leq \kappa(t)\,\le(1+|x|_E^{2m-1}\r).
\end{equation}
Notice that
 the series converges uniformly with respect to $n \in\,\nat$ and $x \in\,B_E(R)$.

}
\end{Remark}

\begin{Lemma}
\label{l2.7}
For any $x,y \in\,E$ and $h \in\,H$ and $t>0$, we have
\begin{equation}
\label{dp125}
|D_xX(t,x)h-D_xX(t,y)h|_H\leq \kappa(t)\,|x-y|_E|h|_H (t\wedge 1)^{\frac 1{2m}},\ \ \ \ \Pro-\text{a.s.}
\end{equation}
where $\kappa(t)$ is a random variable, increasing with respect to $t$, and having finite moments of any order. 
\end{Lemma}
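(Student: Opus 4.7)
The plan is to set $\zeta(t):=D_xX(t,x)h-D_xX(t,y)h$ and study the inhomogeneous random linear equation it solves. Subtracting the linearizations \eqref{random} written at $x$ and at $y$, one finds
\[
\zeta'(t)=A\zeta(t)+F'(X(t,x))\zeta(t)+\bigl[F'(X(t,x))-F'(X(t,y))\bigr]\,D_xX(t,y)h,\qquad \zeta(0)=0.
\]
Variation of constants with respect to the random evolution $U^x_{t,s}$ from \eqref{dp225} then yields
\[
\zeta(t)=\int_0^t U^x_{t,s}\bigl[g(s)\,D_xX(s,y)h\bigr]\,ds,\qquad g(s):=F'(X(s,x))-F'(X(s,y)),
\]
where the bracketed product is pointwise in $\xi$ because $F'$ acts by Nemytski multiplication.

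To estimate $|\zeta(t)|_H$, the kernel inequality \eqref{dp102} combined with the $H$-contractivity of $e^{(t-s)A}$ gives $\|U^x_{t,s}\|_{H\to H}\leq e^{\rho t}$, while \eqref{dp103} with $p=q=2$ yields $|D_xX(s,y)h|_H\leq c\,e^{\rho s}|h|_H$. Applying the mean value theorem in $\R$ pointwise in $\xi$ and using the polynomial bound \eqref{dp3} for $j=2$, one obtains
\[
|g(s)|_E\leq C\bigl(1+|X(s,x)|_E^{2m-1}+|X(s,y)|_E^{2m-1}\bigr)\,|X(s,x)-X(s,y)|_E,
\]
and the uniform derivative bound \eqref{6.3.9} for $r=1$, integrated along the segment joining $y$ to $x$, provides $|X(s,x)-X(s,y)|_E\leq\Lambda_1(t)\,|x-y|_E$.

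The decisive step is to discard the naive pointwise bound $|X(s,z)|_E\leq\Lambda(s)(1+|z|_E)$, which would drag $|x|_E,|y|_E$ into the constant, and use instead the supremum estimate \eqref{6.2.9}, namely $|X(s,z)|_E\leq\Gamma(s)\,s^{-1/(2m)}$ uniformly in $z\in E$. Although singular at $s=0$, the exponent $2m-1$ keeps the integrand integrable, with
\[
\int_0^t s^{-(2m-1)/(2m)}\,ds=2m\,t^{1/(2m)}.
\]
Assembling all the estimates one finds, for some deterministic $C>0$,
\[
|\zeta(t)|_H\leq C\,e^{2\rho t}\,\Lambda_1(t)\,\bigl(1+\Gamma(t)^{2m-1}\bigr)\,t^{1/(2m)}\,|x-y|_E\,|h|_H,
\]
and for $t\geq 1$ the surplus factor $t^{1/(2m)}$ is absorbed into a random $\kappa(t)$ having finite moments of every order (since $\Lambda_1,\Gamma$ do), leaving only $(t\wedge 1)^{1/(2m)}$ explicit and yielding \eqref{dp125}.

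The main obstacle is precisely this uniformity in $x,y\in E$: the polynomial growth of $f''$ forces $|X(s,\cdot)|_E^{2m-1}$ into the integrand, and the only tool that controls it independently of the initial data is the singular estimate \eqref{6.2.9}. Its homogeneity matches the polynomial degree of $f''$ exactly, producing the integrable power $s^{-(2m-1)/(2m)}$ and hence the sharp $t^{1/(2m)}$ scaling claimed in the lemma.
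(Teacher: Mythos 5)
Your proof is correct and follows essentially the same route as the paper: the same variation-of-constants representation for $\zeta(t)=D_xX(t,x)h-D_xX(t,y)h$ via $U^x_{t,s}$, the same use of the uniform singular bound \eqref{6.2.9} to convert the polynomial factor $|X(s,\cdot)|_E^{2m-1}$ into the integrable weight $s^{-1+\frac1{2m}}$, and the same integration producing the $ (t\wedge1)^{\frac1{2m}}$ factor. The only cosmetic difference is that you invoke \eqref{6.3.9} for the Lipschitz estimate $|X(s,x)-X(s,y)|_E\leq \Lambda_1(t)|x-y|_E$ where the paper appeals to \eqref{dp120}; both are valid.
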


\begin{proof}
If we define $\rho(t):=D_xX(t,x)h-D_xX(t,y)h$, we have
\[\rho^\prime(t)=A\rho(t)+F^\prime(X(t,x))\rho(t)+\le[F^\prime(X(t,x))-F^\prime(X(t,y))\r]D_xX(t,y)h,\ \ \ \ \rho(0)=0,\]
and then,
\[\rho(t)=\int_0^t U^x_{t,s}\le[F^\prime(X(s,x))-F^\prime(X(s,y))\r]D_xX(s,y)h\,ds.\]
According to \eqref{6.2.9} and  \eqref{dp103}, this yields
\[\begin{array}{l}
\ds{|\rho(t)|_H\leq c\int_0^t\le|\le[F^\prime(X(s,x))-F^\prime(X(s,y))\r]D_xX(s,y)h\r|_H\,ds}\\
\vs
\ds{\leq c\int_0^t\le(1+|X(t,x)|_E^{2m-1}+|X(t,y)|_E^{2m-1}\r)\le|X(s,x)-X(s,y)\r|_E\le|D_xX(s,y)h\r|_H\,ds}\\
\vs
\ds{\leq \Lambda(t)\int_0^t\le(1+s^{-1+\frac 1{2m}}\r)\le|X(s,x)-X(s,y)\r|_E\le|D_xX(s,y)h\r|_H\,ds}\end{array}\]
and due to  \eqref{dp120} this allows to conclude.

\end{proof}

\begin{Remark}
{\em 
\begin{enumerate}
\item Since 
\[|U^x_{t,s}h|_E\leq c\,((t-s)\wedge 1)^{-\frac 14}|h|_H,\]
from the proof above, we easily see that for any $x, y \in\,E$ and $h \in\,H$
\begin{equation}
\label{dp230}
|D_xX(t,x)h-D_xX(t,y)h|_E\leq \kappa(t)\,|x-y|_E|h|_H (t\wedge 1)^{-\frac 14+\frac 1{2m}},\ \ \ \ \Pro-\text{a.s.}
\end{equation}
for some random variable $\kappa(t)$ as in Lemma \ref{l2.7}

\item Let $v(\cdot,x,h)$ be the process defined above  as 
\[\lim_{n\to \infty}D_xX(t,x_n)h_n,\ \ \ \text{in}\ H,\]
for any two sequences $\{x_n\}_{n \in\,\nat}$ and $\{h_n\}_{n \in\,\nat}$, converging in $H$ to $x$ and $h$, respectively.

Then, as above for $D_xX(t,x)h$, we have that for any $x,y,h \in\,H$ and $t>0$
\begin{equation}
\label{dp125bis}
|v(t,x,h)-v(t,y,h)|_H\leq \kappa(t)\,|x-y|_H|h|_H (t\wedge 1)^{-\frac 14+\frac 1{2m}},
\end{equation}
where $\kappa(t)$ is a random variable, increasing with respect to $t$, and having finite moments of any order. 

\end{enumerate}}
\end{Remark}

\section{The unperturbed semigroup}

In what follows, we shall denote by $P_t$  the Markov transition semigroup associated with equation \eqref{e1.1} in $E$. Namely
 \begin{equation}
\label{e1.5}
P_t\varphi(x)=\E\,\varphi(X(t,x)),\ \ \ \ x \in\,E, 
\end{equation}
for any $\varphi \in\,B_b(E)$ and $t\geq 0$, where $X(t,x)$ is the unique mild solution of equation \eqref{e1.1}. Moreover, we shall denote by $P^H_t$ the transition semigroup associated with equation \eqref{e1.1} in $H$. Namely
 \begin{equation}
\label{e1.5H}
P^H_t\varphi(x)=\E\,\varphi(X(t,x)),\ \ \ \ x \in\,H, 
\end{equation}
for any $\varphi \in\,B_b(H)$, where $X(t,x)$ is the unique generalized solution of equation \eqref{e1.1} in $H$. Notice that since $E$ is a Borel subset of $H$, if $x \in\,E$ and $\varphi \in\,B_b(E)$, then
\begin{equation}
\label{dp116}
P_t\varphi(x)=P^H_t\varphi(x),\ \ \ \ t\geq 0.
\end{equation}
For this reason,  in what follows we may not distinguish between $P_t$ and $P^H_t$ when it is not necessary.

In \cite[Theorem 6.5.1]{C01}) it is proved that the semigroup $P_t$ has a smoothing effect and, in spite of the polynomial growth of $f$, uniform bounds are satisfied by the derivatives of $P_t\varphi$. Actually, we have the following result.

\begin{Proposition}
\label{p2.3}
For any $\varphi \in\,B_b(E)$ and $t>0$, we have that $P_t\varphi \in\,C^3_b(E)$ and for any $0\leq i\leq j\leq 3$
\begin{equation}
\label{e2.3}
\|P_t\varphi\|_{C_b^j(E)}\leq c_j\,(t\wedge 1)^{-\frac{j-i}2}\|\varphi\|_{C^i_b(E)}.
\end{equation}
Moreover, it holds
\begin{equation}
\label{dp111}
\le<h,D(P_t\varphi)(x)\r>_E=\frac 1t\E\,\varphi(X(t,x))\int_0^t\le<D_xX(s,x)h,dw(s)\r>_H.
\end{equation}
\end{Proposition}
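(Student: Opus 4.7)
The proof is built around the Bismut--Elworthy--Li formula \eqref{dp111}: once it is established (for $\varphi\in C^1_b(E)$), the bound \eqref{e2.3} in the single ``regularising'' case $i=0$, $j=1$ follows at once, and the remaining cases are obtained by combining it with direct differentiation and the semigroup property.

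\medskip

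\emph{Step 1: the BEL formula \eqref{dp111}.} First assume $\varphi\in C^1_b(E)$ so that one can legitimately differentiate inside the expectation,
\[
\langle h,D(P_t\varphi)(x)\rangle_E=\E\,\langle D\varphi(X(t,x)),D_xX(t,x)h\rangle.
\]
By the Markov property this identity remains true with $t$ replaced by any $s\in(0,t]$ after writing $P_t\varphi=P_{t-s}(P_s\varphi)$, yielding, for every $s\in[0,t]$,
\[
\langle h,D(P_t\varphi)(x)\rangle_E=\E\,\langle D(P_{t-s}\varphi)(X(s,x)),D_xX(s,x)h\rangle.
\]
On the other hand, $s\mapsto P_{t-s}\varphi(X(s,x))$ is a martingale, and the martingale representation theorem applied together with It\^o's formula gives
\[
\varphi(X(t,x))=P_t\varphi(x)+\int_0^t\langle D(P_{t-s}\varphi)(X(s,x)),dw(s)\rangle_H.
\]
Multiplying this identity by $\int_0^t\langle D_xX(r,x)h,dw(r)\rangle_H$, taking expectation and using the It\^o isometry, the constant $P_t\varphi(x)$ drops out and one recognises the integrand of the previous display, so that
\[
\E\,\varphi(X(t,x))\int_0^t\langle D_xX(s,x)h,dw(s)\rangle_H=\int_0^t\langle h,D(P_t\varphi)(x)\rangle_E\,ds=t\,\langle h,D(P_t\varphi)(x)\rangle_E.
\]
This proves \eqref{dp111} for $\varphi\in C^1_b(E)$; extension to $\varphi\in B_b(E)$ is by a standard approximation argument, approximating $\varphi$ by convolutions/Lipschitz functions and passing to the limit in the (deterministic) right-hand side.

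\medskip

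\emph{Step 2: the estimate \eqref{e2.3}.} For $(i,j)=(0,1)$ I apply \eqref{dp111} and Cauchy--Schwarz together with the It\^o isometry:
\[
|\langle h,D(P_t\varphi)(x)\rangle_E|\le\frac{\|\varphi\|_{B_b(E)}}{t}\Big(\E\int_0^t|D_xX(s,x)h|_H^2\,ds\Big)^{1/2}.
\]
By Lemma \ref{l2.2} (case $p=q=2$) one has $|D_xX(s,x)h|_H\le c\,e^{\rho s}|h|_H\le c\,e^{\rho s}|h|_E$, yielding $\|DP_t\varphi\|_{C_b(E)}\le c\,(t\wedge 1)^{-1/2}\|\varphi\|_{C_b(E)}$ (for $t>1$ one invokes the semigroup property $P_t=P_1\circ P_{t-1}$). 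The diagonal cases $i=j$ reduce to differentiating under the expectation and using the uniform bounds \eqref{6.3.9} on $D^j_xX(t,x)$, $j=1,2,3$. The remaining (mixed) cases $i<j$ are obtained by iteration: split $P_t\varphi=P_{t/2}(P_{t/2}\varphi)$, apply the BEL formula to the outer $P_{t/2}$ (treating $P_{t/2}\varphi$ as a new bounded, but now $C^1_b$, function thanks to the previously proved case $(0,1)$), and differentiate the resulting expression in the remaining directions using \eqref{6.3.9}, Lemma \ref{l2.2} and Lemma \ref{l23}. Each additional differentiation produces either a factor with a time-integrated stochastic integral (giving a $(t\wedge 1)^{1/2}$) or an explicit derivative of $P_{t/2}\varphi$ (contributing $(t\wedge 1)^{-1/2}$ by the previous case), which after combinatorial bookkeeping gives the exponent $-(j-i)/2$.

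\medskip

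\emph{Main obstacle.} The delicate point is \emph{not} the formula \eqref{dp111}, which is a fairly standard computation, but rather obtaining the bounds for $j=2,3$ in the Banach-space norm $|\cdot|_E$: when one differentiates the BEL formula once more, the extra stochastic integral $\int_0^{t/2}\langle D^2_xX(s,x)(h,k),dw(s)\rangle$ appears, and its $L^2(\Omega)$--norm must be controlled using Lemma \ref{dp110} (which is precisely stated in Hilbertian form $\sum_i|D^2_xX(s,x)(e_i,e_i)|_H$, reflecting that the cylindrical noise lives in $H$ while the function $\varphi$ only has $E$--regularity). Managing the interplay between the $E$--bound required on the output and the $H$--structure in which the It\^o calculus naturally sits is the core technical difficulty, and it is here that the polynomial growth encoded in Hypothesis \ref{H1} enters through the factor $1+|x|_E^{2m-1}$; the smoothing given by $e^{tA}$ in Lemma \ref{l2.2} (ultracontractivity) is what ultimately absorbs these polynomial terms and delivers the $(t\wedge 1)^{-(j-i)/2}$ blow-up with a $\varphi$--dependence only through $\|\varphi\|_{C^i_b(E)}$.
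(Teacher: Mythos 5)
The paper offers no proof of this proposition: it is imported verbatim from \cite[Theorem 6.5.1]{C01}, so there is no internal argument to compare against. Your route --- establish the Bismut--Elworthy--Li identity \eqref{dp111} for $\varphi\in C^1_b(E)$, deduce the case $(i,j)=(0,1)$ of \eqref{e2.3} by Cauchy--Schwarz and the It\^o isometry, and obtain $j=2,3$ by the semigroup splitting $P_t=P_{t/2}P_{t/2}$ and differentiation of \eqref{dp111} --- is exactly the standard argument of that reference, and it matches the way the paper itself manipulates \eqref{dp111} later (for instance in the proof of Theorem \ref{un44}, which differentiates this very formula and controls the extra term $\int_0^t\langle D^2_xX(s,x)(h,k),dw(s)\rangle_H$ via \eqref{6.3.9} and Lemma \ref{dp110}, just as you describe). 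The only points your sketch leaves genuinely thin are the identification of the integrand in the martingale-representation step (which in this Banach setting requires either a finite-dimensional approximation or the trace estimates of Lemma \ref{dp110} to run It\^o's formula) and the passage from $C^1_b(E)$ to $B_b(E)$, where one should argue via pointwise bounded approximation and the uniform-in-$n$ continuity of the right-hand side rather than ``convolutions,'' which are not available on $E$; both are standard and do not affect the correctness of the approach.
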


As a consequence of \eqref{dp111} and \eqref{dp120}, we have
\[\begin{array}{l}
\ds{\le|\le<h,D(P_t\varphi)(x)\r>_E\r|\leq \frac 1t\|\varphi\|_{C_b(E)}\le(\E\,\int_0^t|D_xX(s,x)h|_H^2\,ds\r)^{\frac 12}}\\
\vs
\ds{\leq c(t)(t\wedge 1)^{-\frac 12}\|\varphi\|_{C_b(E)}\,|h|_H,}
\end{array}\]
so that $D(P_t\varphi)(x)$ can be extended to a linear functional on $H$, for any $x \in\,E$ and $t>0$, and
\begin{equation}
\label{dp300}
|D(P_t\varphi)|_{C_b(E,H)}\leq c(t)(t\wedge 1)^{-\frac 12}\,\|\varphi\|_{C_b(E)}.
\end{equation}
In fact, the mapping $D(P_t\varphi):E\to H$ is Lipschitz-continuous, as shown in next lemma.

\begin{Lemma}
\label{l3.2}
For any $\varphi \in\,C_b(E)$, $x \in\,E$  and $t>0$ we have that $D(P_t\varphi)(x) \in\,H$ and for $j=0,1$
\begin{equation}
\label{dp235}
|D(P_t\varphi)(x)-D(P_t\varphi)(y)|_H\leq c(t)(t\wedge 1)^{-\frac {2-j}2}\|\varphi\|_{C_b^j(E)}|x-y|_E,\ \ \ \ x, y \in\,E.
\end{equation}
\end{Lemma}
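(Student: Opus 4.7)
The plan is to exploit the Bismut--Elworthy--Li-type formula \eqref{dp111}, first proving the case $j=1$ directly and then reducing the $j=0$ case to it via the semigroup identity $P_t=P_{t/2}\circ P_{t/2}$.

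For $j=1$ and $h\in H$, I would apply \eqref{dp111} at both $x$ and $y$ and subtract, obtaining
\[
\langle h,D(P_t\varphi)(x)-D(P_t\varphi)(y)\rangle_H = I_1(h)+I_2(h),
\]
with
\[
I_1(h)=\tfrac{1}{t}\,\mathbb{E}\Bigl[(\varphi(X(t,x))-\varphi(X(t,y)))\int_0^t\langle D_xX(s,x)h,dw(s)\rangle_H\Bigr],
\]
\[
I_2(h)=\tfrac{1}{t}\,\mathbb{E}\Bigl[\varphi(X(t,y))\int_0^t\langle (D_xX(s,x)-D_xX(s,y))h,dw(s)\rangle_H\Bigr].
\]
For $I_1$ I would bound $|\varphi(X(t,x))-\varphi(X(t,y))|\leq\|\varphi\|_{C^1_b(E)}|X(t,x)-X(t,y)|_E$, combine it with the $E$-Lipschitz bound on $X(t,\cdot)$ obtained by integrating \eqref{6.3.9}, and control the stochastic integral by It\^o's isometry, invoking \eqref{dp120} at $p=q=2$ to get $|D_xX(s,x)h|_H\leq c|h|_H$. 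For $I_2$ I would use $|\varphi(X(t,y))|\leq\|\varphi\|_{C_b(E)}$ and appeal to Lemma~\ref{l2.7}, which gives $|D_xX(s,x)h-D_xX(s,y)h|_H\leq\kappa(s)|x-y|_E|h|_H(s\wedge 1)^{1/(2m)}$, square-integrable on $[0,t]$. Both contributions are of size $c(t)(t\wedge 1)^{-1/2}\|\varphi\|_{C^1_b(E)}|x-y|_E|h|_H$, and taking the supremum over $|h|_H\leq 1$ yields the $j=1$ estimate.

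For $j=0$, set $\psi:=P_{t/2}\varphi$. By Proposition~\ref{p2.3} with $i=0$, $j=1$, $\psi\in C^1_b(E)$ with $\|\psi\|_{C^1_b(E)}\leq c((t/2)\wedge 1)^{-1/2}\|\varphi\|_{C_b(E)}$. Since $P_t\varphi=P_{t/2}\psi$, applying the case $j=1$ to $\psi$ at time $t/2$ produces two factors of $((t/2)\wedge 1)^{-1/2}$ that combine into $(t\wedge 1)^{-1}$, matching the claim.

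The hard part is precisely the $j=0$ case: because $\varphi$ is only bounded, there is no way to control $|\varphi(X(t,x))-\varphi(X(t,y))|$ by $|x-y|_E$ directly, so a one-step Bismut--Elworthy argument fails; the factorisation trick sidesteps this at the cost of one extra factor $(t\wedge 1)^{-1/2}$, exactly the exponent appearing in the statement. A secondary technical point is that \eqref{dp111} is initially stated for $h\in E$, but the uniform bound $|D_xX(s,x)h|_H\leq c|h|_H$ from \eqref{dp120} at $p=q=2$ extends the identity continuously to $h\in H$, and this is precisely what shows $D(P_t\varphi)(x)\in H$ as claimed.
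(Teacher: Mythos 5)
Your proposal is correct and follows essentially the same route as the paper: the identical two-term decomposition of $\langle h,D(P_t\varphi)(x)-D(P_t\varphi)(y)\rangle$ via \eqref{dp111}, the same use of \eqref{dp120} and Lemma \ref{l2.7} to bound the two pieces for $j=1$, and the same reduction of $j=0$ to $j=1$ through the semigroup law and \eqref{e2.3}. No gaps.
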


\begin{proof}
Assume $\varphi \in\,C^1_b(E)$ and fix $x,y,h \in\,E$ and $t>0$. Then, 
\[\begin{array}{l}
\ds{\le<h,\le(D(P_t\varphi)(x)- D(P_t\varphi)(y)\r)\r>_E
=\frac 1t\E\,\le[\varphi(X(t,x))-\varphi(X(t,y))\r]\int_0^t \le<D_xX(s,x)h,dw(s)\r>_H}\\
\vs
\ds{+\frac 1t\E\,\varphi(X(t,y))\int_0^t \le<D_xX(s,x)h-D_xX(s,y)h,dw(s)\r>_H.}
\end{array}\]
Therefore, thanks to \eqref{dp120} and \eqref{dp125}, we get
\[\begin{array}{l}
\ds{\le|\le<h,\le(D(P_t\varphi)(x)- D(P_t\varphi)(y)\r)\r>_E\r| \leq \frac {c(t)}t\|\varphi\|_{C^1_b(E)}|x-y|_E\,\le(\E\int_0^t|D_xX(s,x)h|_H^2\,ds\r)^{\frac 12}}\\
\vs
\ds{+\frac 1t\,\|\varphi\|_{C_b(E)}\le(\E\int_0^t|D_xX(s,x)h-D_xX(s,y)h|_H^2\,ds\r)^{\frac 12}}\\
\vs
\ds{\leq c(t)(t\wedge 1)^{-\frac 12}\|\varphi\|_{C_b^1(E)}|h|_H|x-y|_E+c(t)(t\wedge 1)^{\frac 1{2m}}\|\varphi\|_{C_b(E)}|h|_H\,|x-y|_E,}
\end{array}\]
and this implies \eqref{dp235} for $j=1$. The case $j=0$ follows from \eqref{e2.3} and the semigroup law.

\end{proof}

Next, we recall that in \cite[Section 3]{CD12}, by using suitable interpolation estimates  for real-valued functions defined in the Banach space $E$, we have proved the following result.
\begin{Proposition}
\label{c2.5}
For any  $\theta\in (0,1)$ and $j=2,3$, there exists $c_{\theta,j}>0$ such that for all  $\varphi\in C^\theta_b(E)$  and all $t>0$ 
\begin{equation}
\label{e2.7}
\|P_t\varphi\|_{C^j_b(E)}\leq c_{\theta,j} (t\wedge 1)^{-\frac{j-\theta}2}\|\varphi\|_{C^\theta_b(E)}.
\end{equation}
\end{Proposition}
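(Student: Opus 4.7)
My plan is to deduce \eqref{e2.7} from the two endpoint bounds of Proposition~\ref{p2.3}, namely the cases $i=0$ and $i=1$,
\[
\|P_t\varphi\|_{C_b^j(E)}\leq c_j(t\wedge 1)^{-j/2}\|\varphi\|_{C_b(E)},\qquad \|P_t\varphi\|_{C_b^j(E)}\leq c_j(t\wedge 1)^{-(j-1)/2}\|\varphi\|_{C_b^1(E)},
\]
by means of a real-interpolation argument at the level of the test function $\varphi$, in the spirit of the analysis carried out in \cite[Section~3]{CD12}.

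Concretely, for each $\varphi\in C_b^\theta(E)$ and each scale $s\in(0,1]$ I would produce a splitting $\varphi=\varphi_s^0+\varphi_s^1$, with $\varphi_s^0\in C_b(E)$ and $\varphi_s^1\in C_b^1(E)$, satisfying
\[
\|\varphi_s^0\|_{C_b(E)}\leq c\,s^{\theta}\|\varphi\|_{C_b^\theta(E)},\qquad \|\varphi_s^1\|_{C_b^1(E)}\leq c\,s^{\theta-1}\|\varphi\|_{C_b^\theta(E)}.
\]
Applying Proposition~\ref{p2.3} separately with $(j,i)=(j,0)$ to $\varphi_s^0$ and $(j,i)=(j,1)$ to $\varphi_s^1$ yields
\[
\|P_t\varphi\|_{C_b^j(E)}\leq c\|\varphi\|_{C_b^\theta(E)}\bigl((t\wedge 1)^{-j/2}s^{\theta}+(t\wedge 1)^{-(j-1)/2}s^{\theta-1}\bigr),
\]
and optimising the right-hand side in $s$ by the choice $s=(t\wedge 1)^{1/2}$ balances the two contributions and produces the desired exponent $(t\wedge 1)^{-(j-\theta)/2}$ in \eqref{e2.7}.

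The key step — and the main technical obstacle — is the construction of the splitting $\varphi=\varphi_s^0+\varphi_s^1$. Over $\reals^d$ such a splitting is supplied immediately by convolution against a standard mollifier, but no Lebesgue-type mollification is available on the infinite-dimensional Banach space $E$. Instead, one has to smooth $\varphi$ through an auxiliary device tailored to the Banach geometry of $E$, for instance by convolving with a cylindrical Gaussian measure supported on $E$ or by applying an appropriate Ornstein--Uhlenbeck semigroup on $E$, and then to verify by hand the two scaling estimates: that the smoothed function lies in $C_b^1(E)$ with norm controlled by $c\,s^{\theta-1}\|\varphi\|_{C_b^\theta(E)}$, and that the remainder is uniformly bounded by $c\,s^{\theta}\|\varphi\|_{C_b^\theta(E)}$. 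This is precisely the content of the interpolation lemma of \cite[Section~3]{CD12}; once it is in hand, the bound \eqref{e2.7} reduces to the one-line minimisation above.
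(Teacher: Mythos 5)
Your reduction is arithmetically sound: with the two endpoint bounds of Proposition \ref{p2.3} and a splitting $\varphi=\varphi_s^0+\varphi_s^1$ satisfying $\|\varphi_s^0\|_{C_b(E)}\leq c\,s^\theta\|\varphi\|_{C^\theta_b(E)}$ and $\|\varphi_s^1\|_{C^1_b(E)}\leq c\,s^{\theta-1}\|\varphi\|_{C^\theta_b(E)}$, the choice $s=(t\wedge1)^{1/2}$ does give \eqref{e2.7}. Note, however, that the paper itself offers no proof here --- Proposition \ref{c2.5} is simply recalled from \cite[Section 3]{CD12} --- so the only substantive content of a proof attempt is exactly the part you defer.

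The genuine gap is the splitting itself, i.e.\ the embedding of $C^\theta_b(E)$ into the real interpolation space between $C_b(E)$ and $C^1_b(E)$, and the devices you propose do not produce it. Convolving $\varphi$ with a Gaussian measure on $E$, or applying an Ornstein--Uhlenbeck semigroup, yields a function that is differentiable only along Cameron--Martin directions, with derivative living in the dual of the Cameron--Martin space; it does not land in $C^1_b(E)$, i.e.\ Fr\'echet differentiable with derivative bounded in $E^\star$, which is what you need in order to invoke Proposition \ref{p2.3} with $i=1$. The same obstruction is visible inside the present paper: the mollification of the H\"older drift in Lemma \ref{l5.2} (finite-dimensional convolution through the Fej\'er projections) only produces approximants lying in $C^\infty_b(H,E)\cap C^\alpha_b(E,E)$, never approximants close to the original map in a $C^1_b(E,\cdot)$ sense. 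So your argument reduces the proposition to an embedding whose validity on $E=C([0,1])$ is precisely the hard point of the infinite-dimensional Schauder theory, and the sketched constructions of the $C^1_b(E)$ component would fail. To close the gap you should either locate and quote the precise interpolation lemma of \cite{CD12} --- checking that it really asserts this splitting rather than only the elementary converse inequalities such as $\|\varphi\|_{C^\theta_b(E)}\leq c\,\|\varphi\|_{C_b(E)}^{1-\theta}\|\varphi\|_{C^1_b(E)}^{\theta}$, which go in the wrong direction for your purpose --- or replace the generic mollification of $\varphi$ by an argument exploiting the specific structure of $P_t$ (for instance the Bismut--Elworthy type representation \eqref{dp111} together with the semigroup law), which is the route the quoted reference actually follows.
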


As a consequence of \eqref{dp235}, Proposition \ref{c2.5} and the semigroup law imply that for any $\varphi \in\, C^\theta_b(E)$, with $\theta \in\,[0,1]$, and for any  $x, y \in\,E$ and  $t>0$ 
\begin{equation}
\label{dp236}
|D(P_t\varphi)(x)-D(P_t\varphi)(y)|_H\leq c(t)(t\wedge 1)^{-\frac{2-\theta}2}\|\varphi\|_{C_b^\theta(E)}|x-y|_E,\ \ \ \ x, y \in\,E.
\end{equation}

\bigskip

In \cite[Theorem 7.3.1]{C01} we have also shown that for any $t>0$ the semigroup $P^H_t$ maps $B_b(H)$ into $C^1_b(H)$ and
\begin{equation}
\label{dp111H}
\le<h,D(P^H_t\varphi)(x)\r>_H=\frac 1t\E\,\varphi(X(t,x))\int_0^t\le<v(s,x,h),dw(s)\r>_H,
\end{equation}
where $v(\cdot,x,h)$ is the process defined in the previous section as the limit of the derivatives $D_xX(t,x_n)h_n$, where $\{x_n\}_{n \in\,\nat}$ and $\{h_n\}_{n \in\,\nat}$ are two sequences in $E$ converging respectively to $x$ and $h$ in $H$.
In particular, we have shown that
\begin{equation}
\label{dp113}
\|P^H_t\varphi\|_{C^1_b(H)}\leq c\,(t\wedge 1)^{-\frac{1}2}\|\varphi\|_{C_b(H)}.
\end{equation}
Thanks to \eqref{dp123}, we have that $P^H_t:C^\a_b(H)\to C^\a_b(H)$, for any $\a \in\,[0,1)$, and $P^H_t:\text{Lip}_b(H)\to \text{Lip}_b(H)$, with
\[\|P^H_t\varphi\|_{C^\a_b(H)}\leq c(t)\,\|\varphi\|_{C^\a_b(H)},\ \ \ \|P^H_t\varphi\|_{\text{Lip}_b(H)}\leq c(t)\,\|\varphi\|_{\text{Lip}_b(H)}.\]
Therefore, by interpolation, we have that 
$P^H_t:C^\a_b(H)\to C^\beta_b(H)$, for any $0\leq \a\leq \beta\leq 1$, and
\begin{equation}
\label{dp130}
\|P^H_t\varphi\|_{C^\beta_b(H)}\leq c(t)\,(t\wedge 1)^{-\frac{\beta-\a}2}\|\varphi\|_{C^\a_b(H)},\ \ \ t>0.
\end{equation}

\begin{Lemma}
\label{lemma3.2}
Let $0\leq \a<\beta<1$ and let $\varphi \in\,C^\a_b(H)$. Then $P^H_t\varphi \in\,C_b^{1+\beta}(H)$, for any $t>0$, and
\begin{equation}
\label{dp127}
\|P^H_t\varphi\|_{C^{1+\beta}_b(H)}\leq c(t)\le(t\wedge 1\r)^{-(\delta+\frac{\beta-\a}2)}\,\|\varphi\|_{C^{\a}_b(H)},
\end{equation}
where 
\begin{equation}
\label{dp150}
\d=\begin{cases}
\frac 12,  & \text{if}\ m=1,\\
\frac 34-\frac 1{2m},  &  \text{if}\ m>1.
\end{cases}
\end{equation} 

\end{Lemma}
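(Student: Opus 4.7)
The plan is to use the semigroup law $P^H_t=P^H_{t/2}\circ P^H_{t/2}$: by \eqref{dp130} the inner factor already produces $(t\wedge 1)^{-(\beta-\alpha)/2}$ together with the promotion $C^\alpha_b(H)\to C^\beta_b(H)$, so it is enough to show that
\[
P^H_s:C^\beta_b(H)\to C^{1+\beta}_b(H),\qquad \|P^H_s\psi\|_{C^{1+\beta}_b(H)}\le c(s)\,(s\wedge 1)^{-\delta}\,\|\psi\|_{C^\beta_b(H)}.
\]
The $C^1_b(H)$-part of this norm is already provided by \eqref{dp113}, so the whole task reduces to bounding the $\beta$-H\"older seminorm of $x\mapsto D(P^H_s\psi)(x)\in H$.

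Fix $x,y,h\in H$, and, starting from the Bismut-type identity \eqref{dp111H}, split
\[
\langle h,\,D(P^H_s\psi)(x)-D(P^H_s\psi)(y)\rangle_H = I_1+I_2,
\]
where
\[
I_1 = \frac{1}{s}\,\E\bigl[\psi(X(s,x))-\psi(X(s,y))\bigr]\int_0^s\langle v(r,x,h),dw(r)\rangle_H,
\]
\[
I_2 = \frac{1}{s}\,\E\,\psi(X(s,y))\int_0^s\langle v(r,x,h)-v(r,y,h),dw(r)\rangle_H.
\]
For $I_1$, the Cauchy--Schwarz inequality combined with the $\beta$-H\"older regularity of $\psi$, the pathwise estimate \eqref{dp123}, and the It\^o isometry together with the uniform bound $|v(r,x,h)|_H\le c|h|_H$ (a consequence of the case $p=q=2$ of \eqref{dp103} and passage to the limit in the construction of $v$) yield
\[
|I_1|\le c(s)\,(s\wedge 1)^{-1/2}\,[\psi]_{C^\beta_b(H)}\,|x-y|_H^\beta\,|h|_H.
\]
For $I_2$, It\^o's isometry and estimate \eqref{dp125bis} give
\[
|I_2|\le \frac{c(s)}{s}\,\|\psi\|_{C_b(H)}\,|x-y|_H\,|h|_H\,\Bigl(\int_0^s (r\wedge 1)^{-1/2+1/m}\,dr\Bigr)^{1/2},
\]
and, since the time integral is of order $s^{1/2+1/m}$ for $s\le 1$, this produces the factor $(s\wedge 1)^{-(3/4-1/(2m))}$.

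To pass from $|x-y|_H$ to $|x-y|_H^\beta$ in the estimate for $I_2$ I would argue by cases: if $|x-y|_H\le 1$ then $|x-y|_H\le|x-y|_H^\beta$ trivially; if $|x-y|_H>1$ I would fall back on the boundedness bound
\[
|D(P^H_s\psi)(x)-D(P^H_s\psi)(y)|_H \le 2\|D(P^H_s\psi)\|_{C_b(H,H)}\le c(s)\,(s\wedge 1)^{-1/2}\|\psi\|_{C_b(H)}
\]
and multiply by $|x-y|_H^\beta\ge 1$. The worst exponent of $(s\wedge 1)$ arising from $I_1$ and $I_2$ is then exactly $\delta$ as in \eqref{dp150}: for $m=1$ the contribution $1/2$ from $I_1$ beats the contribution $1/4$ from $I_2$, while for $m\ge 2$ the contribution $3/4-1/(2m)\ge 1/2$ from $I_2$ dominates, yielding $\delta=3/4-1/(2m)$. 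The main obstacle is less the Bismut formula than the careful bookkeeping of time exponents together with the verification that the constant in the bound $|v(r,x,h)|_H\le c|h|_H$ is uniform in $x\in H$, which I would handle by invoking the uniform kernel bound \eqref{dp102} for $U^x_{t,s}$ and then passing to the limit in the defining sequence of $v$.
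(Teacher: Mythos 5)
Your proposal is correct and follows essentially the same route as the paper: the Bismut-type identity \eqref{dp111H} split into the two terms $I_1$, $I_2$, bounded via \eqref{dp123}, \eqref{dp125bis} and the It\^o isometry, followed by the semigroup law and \eqref{dp130} to pass from $C^\a_b(H)$ to $C^\beta_b(H)$. Your explicit case analysis for converting $|x-y|_H$ into $|x-y|_H^\beta$ in the $I_2$ estimate is in fact slightly more careful than the paper's write-up, which leaves that step implicit.
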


\begin{proof}
Assume $\varphi \in\,C_b(H)$. Then, 
\[\begin{array}{l}
\ds{\le<h,D(P^H_t\varphi)(x)- D(P^H_t\varphi)(y)\r>_H 
=\frac 1t\E\,\le[\varphi(X(t,x))-\varphi(X(t,y))\r]\int_0^t \le<v(s,x,h),dw(s)\r>_H}\\
\vs
\ds{+\frac 1t\E\,\varphi(X(t,y))\int_0^t \le<v(s,x,h)-v(s,y,h),dw(s)\r>_H}
\end{array}\]
Therefore, thanks to \eqref{dp123} and \eqref{dp125bis}, we get
\[\begin{array}{l}
\ds{\le|\le<h,D(P^H_t\varphi)(x)- D(P^H_t\varphi)(y)\r>_H \r|\leq \frac {c(t)}t\|\varphi\|_{C_b^\beta(H)}\le(\E\int_0^t|v(s,x,h)|_H^2\,ds\r)^{\frac 12}|x-y|_H^\beta}\\
\vs
\ds{+\frac 1t\|\varphi\|_{C_b(H)}\le(\E\int_0^t|v(s,x,h)-v(s,y,h)|_H^2\,ds\r)^{\frac 12}}\\
\vs
\ds{\leq c(t)(t\wedge 1)^{-\frac 12}\|\varphi\|_{C_b^\beta(H)}|x-y|_H^\beta|h|_H+c(t)(t\wedge 1)^{-\frac 34+\frac 1{2m}}|h|_H.}

\end{array}\]
By the semigroup law, this implies that 
\[\le|D(P^H_t\varphi)(x)- D(P^H_t\varphi)(y)\r|_H\leq c(t)\,(t\wedge 1)^{-\delta}\|P^H_{t/2}\varphi\|_{C^\beta_b(H)}|x-y|_H^\beta,\]
so that \eqref{dp127} follows from \eqref{dp130}.

\end{proof}

\bigskip

By proceeding as in \cite{C94} (see also \cite[Appendix B]{C01}), we introduce the generator of $P_t$.
For any $\lambda>0$ and $\varphi\in C_b(E)$ we define
\begin{equation}
\label{e1.6}
F(\lambda)\varphi(x)=\int_0^\infty e^{-\lambda t}P_t\varphi(x)dt,\ \ \ \  x\in E.
\end{equation}

As proved e.g. in \cite[Proposition B.1.4]{C01}, there exists a unique $m$--dissipative operator $\mathcal L$ in $C_b(E)$ such that 
$$
R(\la,\L)=(\lambda-\mathcal L)^{-1}=F(\lambda),\ \ \ \lambda>0. 
$$
${\cal L}:D({\cal L})\subseteq C_b(E)\to C_b(E)$ is the {\em weak infinitesimal generator} of $P_t$. We would like to recall that, as proved in \cite{C94} (see also \cite{P99} and \cite{C01}), 
if $\varphi \in\,D(\L)$, then
\[\lim_{t\to 0}\Delta_t\varphi(x)=\L \varphi(x),\ \ \ x \in\,E,\]
and 
\[\sup_{t \in\,(0,1]}\|\Delta_t\varphi\|_{C_b(E)}<\infty,\]
where
\[\Delta_\e=\frac{1}{\e}\le(P_\e-I\r),\ \ \ \ \e \in\,(0,1].\]
Moreover, for any $\varphi \in\,D(\L)$ and $t\geq 0$, we have $P_t\varphi \in\,D(\L)$ and
\[\L P_t\varphi=P_t \L \varphi.\]
The mapping, $t\mapsto P_t\varphi(x)$ is differentiable, and
\[\frac{d}{dt}P_t\varphi(x)=\L P_t\varphi(x)=P_t\L\varphi(x),\ \ \ \ x \in\,E.\]

\bigskip

First of all, we notice that
\[\|R(\la,\L)\varphi\|_{C_b(E)}\leq \frac 1\la\,\|\varphi\|_{C_b(E)},\ \ \ \ \varphi \in\,C_b(E).\]
Moreover, as due to \eqref{e2.3} we have
\[\|P_t\varphi\|_{C_b^1(E)}\leq c\,(t\wedge 1)^{-\frac{1}2}\|\varphi\|_{C_b(E)},\]
we immediately have that $D(\L)\subset C^1_b(E)$ and
\begin{equation}
\label{der}
\sup_{x \in\,E}\|D(R(\la,\L)\varphi)(x)\|\leq \frac c{\sqrt{\la}}\|\varphi\|_{C_b(E)}.
\end{equation}
Notice that, as a consequence of \eqref{dp236}, if $\varphi \in\,C_b^\theta(E)$, with $\theta>0$, we have that $D(R(\la,\L)\varphi):E\to H$ is well defined, and
\begin{equation}
\label{dp237}
|D(R(\la,\L)\varphi)(x)-D(R(\la,\L)\varphi)(y)|\leq c\la^{-\frac \theta 2}\,\|\varphi\|_{C_b^\theta(E)},\ \ \ \ x, y \in\,E.
\end{equation}

\medskip

As for $P_t$ and $\L$, we can also introduce the weak generator $\L^H$ of the semigroup $P^H_t$. Due to 
\eqref{dp116}, for any $\la>0$ and $\varphi \in\,C_b(H)$ we have
\begin{equation}
\label{dp117}
R(\la,\L)\varphi(x)=R(\la,\L^H)\varphi(x),\ \ \ \ x \in\,E.
\end{equation}

Now, for any $\la>0$ and $\psi \in\,C_b(E)$, we consider the elliptic equation 
\begin{equation}
\label{elle}
\la \varphi-\L \varphi=\psi.
\end{equation}
As the resolvent set of $\L$ contains $(0,+\infty)$, we have that equation \eqref{elle} admits a unique solution in $C_b(E)$, which is given by $\varphi=R(\la,\L)\psi$.

In \cite{CD12} we have proved that in fact Schauder estimates are satisfied by the solution of equation \eqref{elle}.
\begin{Theorem}
\label{un3}
 Let $\psi\in C^\theta_b(E)$, with $\theta\in (0,1)$,     
  and let $\varphi=R(\la,\L)\psi$,  with $\lambda>0$.
 Then we have $ \varphi\in C^{2+\theta}_b(E)$ and there exists $c>0$ 
(independent  of $\psi$) such that
\begin{equation}
\label{e3.1}
 \|\varphi\|_{C^{2+\theta}_b(E)}\leq c\,\|\psi\|_{C^{\theta}_b(E)}.
\end{equation}
\end{Theorem}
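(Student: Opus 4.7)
The strategy is the standard interpolation/splitting argument for Schauder estimates via semigroup smoothing, relying crucially on the bounds for derivatives of $P_t\psi$ established in Proposition \ref{c2.5}. Starting from the resolvent representation
\[
\varphi(x)=R(\la,\L)\psi(x)=\int_0^\infty e^{-\la t}P_t\psi(x)\,dt,
\]
one differentiates under the integral and estimates each derivative separately. The $C_b$ and $C_b^1$ bounds are immediate from the contraction property of the resolvent and from \eqref{der}. For the second derivative, Proposition \ref{c2.5} with $j=2$ gives
\[
\|D^2(P_t\psi)\|_{C_b(E)}\le c_{\theta,2}\,(t\wedge 1)^{-\frac{2-\theta}{2}}\|\psi\|_{C^\theta_b(E)},
\]
and since $(2-\theta)/2<1$ this is integrable at $0$; multiplying by $e^{-\la t}$ and integrating yields a uniform bound $\|D^2\varphi\|_{C_b(E)}\le c\|\psi\|_{C^\theta_b(E)}$.

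The substantive step is the $\theta$-H\"older estimate for $D^2\varphi$. Fix $x,y\in E$ with $r:=|x-y|_E\le 1$ and split
\[
D^2\varphi(x)-D^2\varphi(y)=\int_0^{r^2}e^{-\la t}\bigl(D^2P_t\psi(x)-D^2P_t\psi(y)\bigr)dt+\int_{r^2}^{\infty}e^{-\la t}\bigl(D^2P_t\psi(x)-D^2P_t\psi(y)\bigr)dt.
\]
On the first piece I bound the integrand by twice the sup of $\|D^2P_t\psi\|$, obtaining
\[
\int_0^{r^2}(t\wedge 1)^{-\frac{2-\theta}{2}}dt\,\|\psi\|_{C^\theta_b(E)}\le c\,r^{\theta}\,\|\psi\|_{C^\theta_b(E)}.
\]
On the second piece I use the mean value inequality together with Proposition \ref{c2.5} for $j=3$,
\[
\|D^2P_t\psi(x)-D^2P_t\psi(y)\|\le \|D^3P_t\psi\|_{C_b(E)}\,r\le c_{\theta,3}\,(t\wedge 1)^{-\frac{3-\theta}{2}}\|\psi\|_{C^\theta_b(E)}\,r,
\]
and integrate: since $(3-\theta)/2>1$, the dominant contribution comes from $t\sim r^2$ and gives (after elementary computation and absorbing the harmless exponential tail at infinity)
\[
r\int_{r^2}^{1}t^{-\frac{3-\theta}{2}}dt+r\int_1^\infty e^{-\la t}dt\le c\,r^{\theta}\,\|\psi\|_{C^\theta_b(E)}.
\]
Combining the two pieces yields the required H\"older seminorm bound $[D^2\varphi]_{C^\theta_b(E)}\le c\|\psi\|_{C^\theta_b(E)}$, and together with the uniform bounds on $\varphi, D\varphi, D^2\varphi$ this establishes \eqref{e3.1}.

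\textbf{Main obstacle.} The genuine difficulty in this line of reasoning has already been absorbed into Proposition \ref{c2.5}: getting the third-derivative estimate $\|P_t\psi\|_{C^3_b(E)}\le c(t\wedge 1)^{-(3-\theta)/2}\|\psi\|_{C^\theta_b(E)}$ in the Banach space $E$, which requires the interpolation machinery developed in \cite{CD12} in place of the Hilbert-space tools used in previous works. Once this input is available, the proof above is a routine splitting argument; the only point that needs mild care is the distinction between small and large $t$ in the tail integral, handled here by bounding $(t\wedge 1)^{-(3-\theta)/2}$ separately on $[r^2,1]$ and on $[1,\infty)$ using $e^{-\la t}$ for integrability.
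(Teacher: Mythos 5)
Your proof is correct and is essentially the argument behind this theorem: the paper does not reprove it but imports it from \cite{CD12}, where it is obtained by exactly this route — the interpolatory smoothing estimates of Proposition \ref{c2.5} for $j=2,3$ fed into the Laplace-transform representation of $R(\la,\L)$, with the time integral split at $t=|x-y|_E^2$ and the mean value inequality applied on the tail. The exponent bookkeeping ($\int_0^{r^2}t^{-1+\theta/2}dt\sim r^\theta$ and $r\int_{r^2}^1 t^{-(3-\theta)/2}dt\sim r^\theta$) is right, and you correctly avoid integrating the non-integrable $j=3$ bound down to $t=0$.
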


Notice that, in view of Lemma \ref{lemma3.2}, we have 
\begin{equation}
\label{dp155}
\psi \in\,C^\a_b(H)\Longrightarrow \varphi=R(\la,\L)\psi \in\,C^{1+\beta}_b(H),
\end{equation}
for any $\beta<2(1-\d)+\a$, where $\d$ is the constant defined in \eqref{dp150}.

\medskip

 Next, we show that under a suitable condition on $\psi$ a {\em trace property} is satisfied by $D^2\varphi(x)$.
\begin{Theorem}
\label{un44}
For any $x \in\,E$    and  $\psi \in\,C^\theta_b(H)$, with $\theta >1/2$,   the series
\[\sum_{i=1}^\infty D^2\varphi(x)(e_i,e_i)\] is convergent and
  \begin{equation}
  \label{dp119}
\le|\sum_{i=1}^\infty D^2\varphi(x)(e_i,e_i)\r|\leq c\,\la^{\frac {\theta -2}4}\le(1+|x|_E^{2m-1}\r)\,\|\psi\|_{C^\theta_b(H)}.
\end{equation}
Moreover, the convergence is uniform for $x \in\,B_E(R)$, for any $R>0$.
\end{Theorem}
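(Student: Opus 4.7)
Starting from the Laplace representation
\[
\varphi(x)=R(\lambda,\L)\psi(x)=\int_0^{\infty}e^{-\lambda t}P_t\psi(x)\,dt,
\]
the strategy is to differentiate under the integral and estimate $\sum_i D^2(P_t\psi)(x)(e_i,e_i)$ as a function of $t$ that is integrable against $e^{-\lambda t}\,dt$, the final bound then arising from a Gamma--function computation that produces the prefactor $\lambda^{(\theta-2)/4}$. The key is a Bismut--Elworthy representation based on the splitting $P_t=P_{t/2}\circ P_{t/2}$. Setting $\eta:=P_{t/2}\psi$ and applying \eqref{dp111} to the outer $P_{t/2}$ gives
\[
\langle h,D(P_t\psi)(x)\rangle_E=\frac{2}{t}\E\Bigl[\eta(X(t/2,x))\int_0^{t/2}\langle D_xX(s,x)h,dw(s)\rangle_H\Bigr].
\]
Differentiating once more in $x$ and using that, by Lemma \ref{lemma3.2}, $\eta\in C^{1+\beta}_b(H)$ (so that $D\eta$ can be identified with an $H$-valued bounded continuous map on $H$), one obtains
\[
\langle h,D^2(P_t\psi)(x)h'\rangle_E=T_1(t,x,h,h')+T_2(t,x,h,h'),
\]
where $T_1$ encodes the chain-rule contribution hitting $\eta$ and $T_2$ the contribution hitting the stochastic integral, namely
\[
T_1=\frac{2}{t}\E\Bigl[\langle D\eta(X(t/2,x)),D_xX(t/2,x)h'\rangle_H\!\int_0^{t/2}\!\langle D_xX(s,x)h,dw(s)\rangle_H\Bigr],
\]
\[
T_2=\frac{2}{t}\E\Bigl[\eta(X(t/2,x))\int_0^{t/2}\langle D^2_xX(s,x)(h,h'),dw(s)\rangle_H\Bigr].
\]

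Then I set $h=h'=e_i$ and sum over $i$. For the $T_2$ part, Lemma \ref{dp110} shows that $\sum_i D^2_xX(s,x)(e_i,e_i)$ converges absolutely in $H$ with $|\sum_i D^2_xX(s,x)(e_i,e_i)|_H\leq \kappa(s)(1+|x|_E^{2m-1})(s\wedge 1)^{1/4}$, so the exchange with the stochastic integral is justified and It\^o's isometry gives
\[
\Bigl|\sum_i T_2(t,x,e_i,e_i)\Bigr|\leq \frac{c}{t}\,\|\psi\|_\infty(1+|x|_E^{2m-1})\Bigl(\int_0^{t/2}\!s^{1/2}\,ds\Bigr)^{1/2},
\]
which is $O(t^{-1/4})$ for small $t$ and therefore contributes a term $O(\lambda^{-3/4})\leq c\lambda^{(\theta-2)/4}$ after Laplace integration (this is where the factor $1+|x|_E^{2m-1}$ enters).

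For the $T_1$ part, apply Cauchy--Schwarz on the summed expectation and exploit the Hilbert structure via the kernel representation of $U^x_{t/2,0}$:
\[
\Bigl|\sum_i T_1(t,x,e_i,e_i)\Bigr|\leq \frac{c}{t}\Bigl(\E|D\eta(X(t/2,x))|_H^2\,e^{\rho t}\Bigr)^{1/2}\Bigl(\E\int_0^{t/2}\sum_i |D_xX(s,x)e_i|_H^2\,ds\Bigr)^{1/2}.
\]
The first factor is estimated by the $H$-regularisation $|D\eta(y)|_H\leq c(t\wedge 1)^{-(1-\theta)/2}\|\psi\|_{C^\theta_b(H)}$, obtained from \eqref{dp130} (for the $C^1_b(H)$ component of Lemma \ref{lemma3.2}); the second factor is controlled by Lemma \ref{l23}, which gives $\sum_i|D_xX(s,x)e_i|_H^2\leq c\,e^{2\rho s}s^{-1/2}$ and thus an $O(t^{1/4})$ bound for the square root of the time integral. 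Combining these factors produces a small-$t$ estimate $|\sum_i T_1|\leq c\,t^{-a(\theta)}(1+|x|_E^{2m-1})\|\psi\|_{C^\theta_b(H)}$ with $a(\theta)<1$ precisely when $\theta>1/2$.

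The last step is to integrate the $T_1+T_2$ bound against $e^{-\lambda t}\,dt$ on $(0,\infty)$; splitting $\int_0^1+\int_1^\infty$ and using $\int_0^\infty e^{-\lambda t}t^{b}\,dt=\lambda^{-(b+1)}\Gamma(b+1)$ converts the $t$-singularity into the asserted $\lambda$-power. The uniform convergence of the series on bounded sets of $E$ follows because the dominating quantities from Lemma \ref{l23} and Lemma \ref{dp110} are uniform in $x\in B_E(R)$, and the passage through the Laplace integral preserves this uniformity. The main technical obstacle is the middle step: balancing the Hilbert-space regularisation of $D\eta$ (Lemma \ref{lemma3.2}) against the trace-type summability of the first variation (Lemma \ref{l23}) so that the $t$-exponent is sharp enough for $e^{-\lambda t}\,dt$-integrability under the assumption $\theta>1/2$, while keeping the dependence on $|x|_E$ confined to the $T_2$ term.
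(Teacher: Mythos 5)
Your proposal is correct and follows essentially the same route as the paper: differentiate the Bismut--Elworthy formula \eqref{dp111}, split the second derivative into a term carrying $D\psi$ (controlled via Lemma \ref{l23} and the $C^\theta_b(H)\to C^1_b(H)$ regularisation \eqref{dp130}, giving $(t\wedge1)^{-\frac34-\frac{1-\theta}{2}}$) and a term carrying $D^2_xX$ (controlled via Lemma \ref{dp110}, which is where $1+|x|_E^{2m-1}$ enters), then integrate against $e^{-\lambda t}\,dt$ with $\theta>1/2$ ensuring integrability. The only cosmetic difference is that you build the $P_t=P_{t/2}\circ P_{t/2}$ splitting into the formula from the outset, whereas the paper first treats $\psi\in C^1_b(H)$ and invokes the semigroup law at the end.
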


\begin{proof}
Assume first that $\psi \in\,C^1_b(H)$. If we differentiate in 
\[\le<h,D(P_t\psi)(x)\r>_E=\frac 1t\E\psi(X(t,x))\int_0^t\le<D_xX(s,x)h,dw(s)\r>_H,\]
along the direction $k \in\,E$, we get
\[\begin{array}{l}
\ds{D^2(P_t\psi)(x)(h,k)=\frac 1t\E\le<D_xX(t,x)k,D\psi(X(t,x))\r>_E\int_0^t\le<D_xX(s,x)h,dw(s)\r>_H}\\
\vs
\ds{+\frac 1t\E\,\psi(X(t,x))\int_0^t\le<D^2_xX(s,x)(h,k),dw(s)\r>_H.}
\end{array}\]
This means that for any $n, p \in\,\nat$
\[\begin{array}{l}
\ds{\sum_{i=n}^{n+p}D^2(P_t\psi)(x)(e_i,e_i)}\\
\vs
\ds{=\frac 1t\E\le<D_xX(t,x)\le(\sum_{i=n}^{n+p}\int_0^t\le<D_xX(s,x)e_i,dw(s)\r>_H e_i\r),D\psi(X(t,x))\r>_H}\\
\vs
\ds{+\frac 1t\,\E\psi(X(t,x))\int_0^t\le<\sum_{i=n}^{n+p}D^2_xX(s,x)(e_i,e_i),dw(s)\r>_H=:I_{1,p}^n(t)+I_{2,p}^n(t).}
\end{array}\]
Now, according to \eqref{dp120} and \eqref{dp300}, we have
\[\begin{array}{l}
\ds{|I_{1,p}^n(t)|\leq \frac ct \|\psi\|_{C_b^1(H)}\le(\E\,\le|\sum_{i=n}^{n+p}\int_0^t\le<D_xX(s,x)e_i,dw(s)\r>_H e_i\r|_H^2\r)^{\frac 12}}\\
\vs
\ds{=\frac ct \|\psi\|_{C_b^1(H)}\le(\E\,\int_0^t\sum_{i=n}^{n+p}\le|D_xX(s,x)e_i\r|^2_H \,ds\r)^{\frac 12}
}
\end{array}\]
and then, due to \eqref{dp100} we can conclude that for any $t>0$ and $p\geq 0$
\begin{equation}
\label{dp135}
\lim_{n\to\infty}I_{1,p}^n(t)=0.
\end{equation}
Moreover, for any $n \in\,\nat$
\begin{equation}
\label{dp136}
|I_{1,p}^n(t)|\leq c(t)\,(t\wedge 1)^{-\frac 34}\|\psi\|_{C^1_b(H)}.
\end{equation}

Next, according to \eqref{6.3.9} we have
\[\begin{array}{l}
\ds{|I_{2,p}^n(t)|\leq \frac 1t \|\psi\|_{C_b(H)}\le(\int_0^t\E\le|\sum_{i=n}^{n+p}D^2_xX(s,x)(e_i,e_i)\r|_H^2\,ds\r)^{\frac 12}}\\
\vs
\ds{\leq \frac{c(t)}t\|\psi\|_{C_b(H)}\le(\int_0^t\E\le(\sum_{i=n}^{n+p}\le|D^2_xX(s,x)(e_i,e_i)\r|_H\r)^2\,ds\r)^{\frac 12}.}
\end{array}\]
Then, as a consequence of \eqref{dp109}, we get
\begin{equation}
\label{dp137}
\lim_{n\to\infty}I_{2,p}^n(t)=0,
\end{equation}
and for any $n \in\,\nat$
\begin{equation}
\label{dp138}
|I_{2,p}^n(t)|\leq c(t)\,(t\wedge 1)^{-\frac 14}\le(1+|x|_E^{2m-1}\r)\,\|\psi\|_{C_b(H)}.
\end{equation}
Therefore, as $P_t\psi=P_{t/2}(P_{t/2}\psi)$ and $P_{t/2}\psi \in\,C^1_b(H)$, \eqref{dp135} and \eqref{dp137} imply that for any $p\geq 1$
\begin{equation}
\label{dp140}
\lim_{n\to\infty}\sum_{i=n}^{n+p}D^2(P_t\psi)(x)(e_i,e_i)=0.\end{equation}
Moreover, according to \eqref{dp130}, \eqref{dp136} and \eqref{dp138}, for any $n \in\,\nat$ we have
\begin{equation}
\label{dp141}
\le|\sum_{i=1}^{n}D^2(P_t\psi)(x)(e_i,e_i)\r|\leq c(t)(t\wedge 1)^{-\frac 34-\frac{1-\theta}2}\,\|\psi\|_{C^\theta_b(H)}.
\end{equation}

Now, as
\[\sum_{i=n}^{n+p}D^2\varphi(x)(e_i,e_i)=\int_0^{\infty}e^{-\la t}\sum_{i=n}^{n+p}D^2(P_t\psi)(x)(e_i,e_i)\,dt,\]
from \eqref{dp140} and \eqref{dp141} we can conclude that if $\theta>1/2$ then the series $\sum_{i=1}^{\infty}D^2\varphi(x)(e_i,e_i)$ is convergent and \eqref{dp119} holds.

The uniformity of the convergence with respect to $x \in\,B_R(E)$ is a consequence of the uniformity of the convergence in the series in Lemma \ref{l23} and Lemma \ref{dp110}.

\end{proof}

\begin{Remark}
\label{rem2.7}
{\em In view of Remark \ref{rem2.6}, if $J_n=nR(n,A)$, then we immediately have that the series
\[\sum_{i=1}^\infty D^2\varphi(J_n x)(J_n e_i,J_n e_i),\]
is uniformly convergent, with respect to $x \in\,B_R(E)$ and $n \in\,\nat$.}
\end{Remark}

\section{The vectorial unperturbed  semigroup}
\label{sec4}

If $\Phi \in\,C^j_b(E,E)$, for some positive integer $j$, we have
$D^i\Phi(x)(f_1,\ldots,f_i) \in\,E$, for any $x_1,\ldots,x_i \in\,E$ and any integer $i\leq j$. Moreover, if for  $v \in\,E^\star$ we denote 
\[\varphi_v(x):=\langle \Phi(x),v\rangle_E,\ \ \ x \in\,E,\]
we have that $\varphi_v \in\,C^j_b(E)$, and
\begin{equation}
\label{un14}
\langle D^i\Phi(x)(x_1,\ldots,x_i),v\rangle_E=D^i\varphi_v(x)(x_1,\ldots,x_i),
\end{equation}
so that
\begin{equation}
\label{un15}
\|D^i\Phi(x)\|_{{\cal L}^i(E,E)}= \sup_{|v|_{E^\star}\leq 1}|D^i\varphi_v(x)|_{{\cal L}^i(E)},\ \ \ x \in\,E.
\end{equation}

Now, for any $\Phi \in\,C_b(E,E)$, we define
\[\widehat{P}_t\Phi(x)=\E\,\Phi(X(t,x)),\ \ \ x \in\,E,\ \ \ \ t\geq0.\]

Clearly ${\widehat P}_t$ maps $C_b(E,E)$ into itself and for any $v \in\,E^\star$
\begin{equation}
\label{un12}
\langle{\widehat P}_t\Phi(x),v\rangle_E=P_t\varphi_v(x),\ \ \ x \in\,E,\,\ \ \ t\geq0.
\end{equation}
Moreover, it is possible to adapt the arguments used in \cite[Theorem 6.5.1]{C01} and prove that for any $t>0$
\[{\widehat P}_t:C_b(E,E)\to C^3_b(E,E).\]
This implies the following result.
\begin{Proposition}
\label{un16}
For any  $0\leq i\leq j\leq 3$ and $t>0$
\begin{equation}
\label{vece2.7}
\|\widehat{P}_t\Phi\|_{C^j_b(E,E)}\leq c\, (t\wedge 1)^{-\frac{j-i}2}\|\Phi\|_{C^i_b(E,E)}.
\end{equation}
\end{Proposition}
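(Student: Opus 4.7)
The plan is to reduce the vectorial estimate to the already available scalar estimate \eqref{e2.3} by duality, exploiting the identity \eqref{un12}. The paragraph preceding the statement tells us that, for every $t>0$, $\widehat{P}_t$ maps $C_b(E,E)$ into $C_b^3(E,E)$, so the derivatives $D^k\widehat{P}_t\Phi(x)$ exist as bounded $k$-linear maps from $E^k$ to $E$ for $k\leq 3$. This makes the left-hand side of \eqref{vece2.7} meaningful; only the quantitative bound remains to be shown.

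First, I would fix $v\in E^\star$ with $|v|_{E^\star}\leq 1$ and consider $\varphi_v(x):=\langle \Phi(x),v\rangle_E$. By \eqref{un14}, $\varphi_v\in C^i_b(E)$ and
$$
\|\varphi_v\|_{C^i_b(E)}\leq \|\Phi\|_{C^i_b(E,E)},
$$
since each $D^k\varphi_v(x)(h_1,\ldots,h_k)=\langle D^k\Phi(x)(h_1,\ldots,h_k),v\rangle_E$ is bounded in absolute value by $\|D^k\Phi(x)\|_{\mathcal{L}^k(E,E)}|h_1|_E\cdots|h_k|_E$.

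Second, from \eqref{un12} one has $P_t\varphi_v(x)=\langle \widehat{P}_t\Phi(x),v\rangle_E$ for all $x\in E$. Since $\widehat{P}_t\Phi\in C^3_b(E,E)$, I may differentiate this identity $k$ times in $x$ along any directions $h_1,\ldots,h_k\in E$ and commute the continuous linear functional $v$ with the derivative, obtaining
$$
D^k P_t\varphi_v(x)(h_1,\ldots,h_k)=\langle D^k\widehat{P}_t\Phi(x)(h_1,\ldots,h_k),v\rangle_E.
$$
Applying now the scalar estimate \eqref{e2.3} to $\varphi_v$ and using the bound on $\|\varphi_v\|_{C^i_b(E)}$ yields, for all $0\leq i\leq k\leq 3$,
$$
\bigl|\langle D^k\widehat{P}_t\Phi(x)(h_1,\ldots,h_k),v\rangle_E\bigr|\leq c_k\,(t\wedge 1)^{-\frac{k-i}{2}}\|\Phi\|_{C^i_b(E,E)}\,|h_1|_E\cdots|h_k|_E.
$$

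Third, taking the supremum over $v\in E^\star$ with $|v|_{E^\star}\leq 1$ and using the Hahn--Banach characterization $|y|_E=\sup_{|v|_{E^\star}\leq 1}|\langle y,v\rangle_E|$ gives the desired pointwise bound on $|D^k\widehat{P}_t\Phi(x)(h_1,\ldots,h_k)|_E$; in the formulation \eqref{un15} applied to $\widehat{P}_t\Phi$, this is exactly
$$
\|D^k\widehat{P}_t\Phi(x)\|_{\mathcal{L}^k(E,E)}\leq c_k\,(t\wedge 1)^{-\frac{k-i}{2}}\|\Phi\|_{C^i_b(E,E)}.
$$
Combined with the trivial sup-norm bound $\|\widehat{P}_t\Phi\|_{C_b(E,E)}\leq \|\Phi\|_{C_b(E,E)}$, which follows from the definition of $\widehat{P}_t$ and Jensen's inequality for the Bochner integral in $E$, summation over $k=1,\ldots,j$ produces \eqref{vece2.7}. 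No serious obstacle is expected: the only slightly non-trivial point is to justify the commutations of $v$ with both the expectation defining $\widehat{P}_t\Phi$ and with the Fr\'echet derivatives in $x$, and both are standard once one knows $\Phi(X(t,x))$ is strongly measurable into the separable space $E$ and that the derivatives $D^k\widehat{P}_t\Phi$ are already continuous as provided by the cited adaptation of \cite[Theorem 6.5.1]{C01}.
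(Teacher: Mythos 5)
Your proof is correct and follows essentially the same route as the paper: reduce to the scalar estimate \eqref{e2.3} via the duality identities \eqref{un12} and \eqref{un14}--\eqref{un15}, then take the supremum over $v\in E^\star$ with $|v|_{E^\star}\leq 1$. The extra care you take in justifying the commutation of $v$ with the expectation and the Fr\'echet derivatives is a welcome but inessential elaboration of what the paper leaves implicit.
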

\begin{proof}
According to \eqref{un15} and \eqref{un12}, we have
\[\begin{array}{l}
\ds{\sup_{x \in\,E}\,|D^j(\widehat{P}_t\Phi)(x)|_{\L^j(E)} =\sup_{|v|_{E^\star}\leq 1}\sup_{x \in\,E}\,|D^j(\langle \widehat{P}_t\Phi,v\rangle_E)(x)|_{\L^j(E,\reals)}}\\
\vs
\ds{=\sup_{|v|_{E^\star}\leq 1}\sup_{x \in\,E}\,|D^j(P_t\varphi_v)(x)|_{\L^j(E,\reals)},}
\end{array}\]
Then, as 
\[\sup_{x \in\,E}\,|D^i\varphi_v(x)|_{\L^j(E,\reals)}\leq |v|_{E^\star}\sup_{x \in\,E}\,|D^i\Phi(x)|_{\L^j(E)},\]
by using \eqref{e2.3} we can conclude.

\end{proof}
Next, as 
\[\|\varphi_v\|_{C^\theta_b(E)}\leq |v|_{E^\star}\|\Phi\|_{C^\theta_b(E,E)},\]
by proceeding as we did in \cite{CD12} by using interpolation, we obtain the following  generalization of Proposition \ref{c2.5} to the vectorial case.

\begin{Proposition}
\label{cvec2.5}
For any  $\theta\in (0,1)$ and $j=2,3$, there exists $c_{\theta,j}>0$ such that for all  $\Phi\in C^\theta_b(E,E)$  and all $t>0$ 
\begin{equation}
\label{evec2.7}
\|\widehat{P}_t\Phi\|_{C^j_b(E,E)}\leq c_{\theta,j} (t\wedge 1)^{-\frac{j-\theta}2}\|\Phi\|_{C^\theta_b(E,E)}.
\end{equation}
\end{Proposition}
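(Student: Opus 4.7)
The plan is to reduce Proposition \ref{cvec2.5} to its scalar counterpart Proposition \ref{c2.5} via a duality/testing argument, exactly as was done in the passage from Proposition \ref{un16} to the estimate on $D^j(\widehat P_t\Phi)$. The key observation is that the identities \eqref{un14}--\eqref{un15} together with \eqref{un12} give a pointwise correspondence between $j$-th derivatives of $\widehat P_t\Phi$ and $j$-th derivatives of the family of scalar functions $\varphi_v(x):=\langle\Phi(x),v\rangle_E$, parameterized by $v$ in the unit ball of $E^\star$. Since Proposition \ref{c2.5} controls $\|P_t\varphi_v\|_{C^j_b(E)}$ by $\|\varphi_v\|_{C^\theta_b(E)}$, we obtain the vectorial bound by taking a supremum.

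More explicitly, the first step is to verify the elementary Hölder estimate
\[
\|\varphi_v\|_{C^\theta_b(E)}\leq |v|_{E^\star}\,\|\Phi\|_{C^\theta_b(E,E)},
\]
which is immediate from $|\varphi_v(x)|\leq |v|_{E^\star}|\Phi(x)|_E$ and $|\varphi_v(x)-\varphi_v(y)|\leq |v|_{E^\star}|\Phi(x)-\Phi(y)|_E$. The second step is to apply Proposition \ref{c2.5} to each $\varphi_v$, yielding
\[
|D^j(P_t\varphi_v)(x)|_{\L^j(E,\reals)}\leq c_{\theta,j}\,(t\wedge 1)^{-\frac{j-\theta}2}\,\|\varphi_v\|_{C^\theta_b(E)}\leq c_{\theta,j}\,(t\wedge 1)^{-\frac{j-\theta}2}\,|v|_{E^\star}\,\|\Phi\|_{C^\theta_b(E,E)}.
\]
The third step is to combine this with the vectorial duality formula
\[
\|D^j(\widehat{P}_t\Phi)(x)\|_{\L^j(E,E)} = \sup_{|v|_{E^\star}\leq 1}\,|D^j(P_t\varphi_v)(x)|_{\L^j(E,\reals)},
\]
which follows from \eqref{un12} (differentiated $j$ times) and \eqref{un15}. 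Taking the supremum over $x\in E$ and $|v|_{E^\star}\leq 1$ gives the desired bound on $\sup_x\|D^j(\widehat{P}_t\Phi)(x)\|_{\L^j(E,E)}$; combined with the trivial contractivity estimate $\|\widehat P_t\Phi\|_{C_b(E,E)}\leq \|\Phi\|_{C_b(E,E)}$ and the lower derivative bounds from Proposition \ref{un16}, we conclude \eqref{evec2.7}.

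I do not expect any real obstacle. The $C^j_b$ regularity of $\widehat P_t\Phi$ in the vectorial sense is already granted by Proposition \ref{un16} (so the derivatives exist and are uniformly continuous and bounded), and the compatibility of scalar and vectorial differentiation is encoded in \eqref{un14}. The only point that deserves a brief remark is that the supremum over $v$ in the unit ball of $E^\star$ really computes the operator norm in $\L^j(E,E)$; this is the standard consequence of the Hahn--Banach theorem applied to the value $D^j(\widehat P_t\Phi)(x)(x_1,\dots,x_j)\in E$, and it is precisely what underlies \eqref{un15}. Thus the proof is essentially a one-line corollary of Proposition \ref{c2.5}, mirroring the derivation of Proposition \ref{un16} from Proposition \ref{p2.3}.
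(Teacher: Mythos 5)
Your proof is correct and is essentially the paper's own argument: the paper records exactly the estimate $\|\varphi_v\|_{C^\theta_b(E)}\leq |v|_{E^\star}\|\Phi\|_{C^\theta_b(E,E)}$ and then obtains the vectorial statement from the scalar Proposition \ref{c2.5} by the same duality/testing mechanism used in the proof of Proposition \ref{un16}, via \eqref{un12}, \eqref{un14} and \eqref{un15}. No gaps; the only point worth noting (which you do note) is that the $C^j_b$ regularity of $\widehat{P}_t\Phi$ is already granted, so the Hahn--Banach identification of the operator norm as a supremum over the unit ball of $E^\star$ applies.
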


Notice that, due to \eqref{vece2.7}, by proceeding as in Lemma \ref{l3.2}, we have that $D(\widehat{P}_t\Phi)(x) \in\,\L(H)$, for any $\Phi \in\,C_b(E,E)$, $x \in\,E$ and $t>0$, and, thanks to \eqref{evec2.7}, as in \eqref{dp236} we have that
\begin{equation}
\label{dp240}
|D(\widehat{P}_t\Phi)(x)-D(\widehat{P}_t\Phi)(y)|_{\L(H)}\leq c(t)\,(t\wedge 1)^{-\frac{2-\theta}2}\|\Phi\|_{C^\theta_b(E,E)}|x-y|_E,\ \ \ \ x,y \in\,E,
\end{equation}
for any $\Phi \in\,C^\theta_b(E,E)$.

Now, as in the case of $P_t$, we can define the infinitesimal generator  of ${\widehat P}_t$, as the unique $m$-dissipative  operator $\widehat{{\cal L}}:D(\widehat{{\cal L}})\subset C_b(E,E)\to C_b(E,E)$ such that
\[R(\la,\widehat{\L})=(\la-\widehat{{\cal L}})^{-1}=\widehat{F}(\la),\ \ \ \la>0,\]
where
\[{\widehat F}(\la)\Phi(x)=\int_0^\infty e^{-\la t}\widehat{P}_t\Phi(x)\,dt,\ \ \ x \in\,E.\]
Due to 
\eqref{un12}, it is immediate to check that
$\Phi \in\,D(\widehat{{\cal L}})$ if and only if $\langle \Phi,v\rangle_E \in\,D({\cal L})$, for any $v \in\,E^\star$, and
\begin{equation}
\label{un13}
\langle \widehat{{\cal L}}\Phi,v\rangle_E={\cal L}\,\varphi_v.
\end{equation}

As for ${\cal L}$, we have that 
\[\|R(\la,\widehat{\L})\Phi\|_{C_b(E,E)}\leq \frac 1\la\,\|\Phi\|_{C_b(E,E)},\ \ \ \ \Phi \in\,C_b(E,E).\]
As a consequence of \eqref{vece2.7},
\begin{equation}
\label{vecder}
\sup_{x \in\,E}|D(R(\la,\widehat{\L})\Phi)(x)|_{\L(E)}\leq \frac c{\sqrt{\la}}\|\Phi\|_{C_b(E,E)},
\end{equation}
and, from \eqref{dp240}, as in \eqref{dp237}, if  $\Phi \in\,C^\theta_b(E,E)$ we get
\begin{equation}
\label{dp241}
|D(R(\la,\widehat{\L})\Phi)(x)-D(R(\la,\widehat{\L})\Phi)(y)|_{\L(H)}\leq \frac{c}{\la^{-\frac \theta 2}}\|\Phi\|_{C^\theta_b(E,E)}\,|x-y|_E,\ \ \ \ x, y \in\,E.
\end{equation}

Moreover,  as a consequence of Proposition \ref{cvec2.5}, we have
\begin{Theorem}
\label{vecun3}
 Let $\Psi\in C^\theta_b(E,E)$, with $\theta\in (0,1)$,     
  and let $\Phi=R(\lambda,\widehat{\mathcal L})\Psi$,  with $\lambda>0$.
 Then we have $ \Phi\in C^{2+\theta}_b(E,E)$ and there exists $c>0$ 
(independent  of $\Psi$) such that
\begin{equation}
\label{vece3.1}
 \|\Phi\|_{C^{2+\theta}_b(E,E)}\leq c\,\|\Psi\|_{C^{\theta}_b(E,E)}.
\end{equation}
\end{Theorem}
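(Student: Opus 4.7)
\textbf{Proof plan for Theorem \ref{vecun3}.} The strategy is to reduce the vector-valued Schauder estimate to its scalar counterpart (Theorem \ref{un3}) by testing $\Phi$ against elements $v \in E^\star$. For each $v \in E^\star$, set
$$\varphi_v(x)=\langle \Phi(x),v\rangle_E,\qquad \psi_v(x)=\langle \Psi(x),v\rangle_E,\qquad x \in E.$$
By \eqref{un13} and the definition of $\Phi = R(\lambda,\widehat{\mathcal L})\Psi$, we get
$$\lambda\,\varphi_v - \mathcal L\,\varphi_v = \langle \lambda\Phi - \widehat{\mathcal L}\Phi, v\rangle_E = \psi_v,$$
so $\varphi_v = R(\lambda,\mathcal L)\psi_v$ in the scalar sense. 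Since clearly $\|\psi_v\|_{C^\theta_b(E)} \leq |v|_{E^\star}\|\Psi\|_{C^\theta_b(E,E)}$, the scalar Schauder estimate \eqref{e3.1} yields
$$\|\varphi_v\|_{C^{2+\theta}_b(E)} \leq c\,\|\psi_v\|_{C^\theta_b(E)} \leq c\,|v|_{E^\star}\,\|\Psi\|_{C^\theta_b(E,E)},$$
with $c$ independent of $v$.

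Next I would pass from the scalar estimates back to the vectorial norm using \eqref{un14}--\eqref{un15}. For the sup-norm and each derivative of order $j=0,1,2$, \eqref{un15} gives
$$\sup_{x \in E}\|D^j\Phi(x)\|_{\mathcal L^j(E,E)} = \sup_{|v|_{E^\star}\leq 1}\,\sup_{x \in E}|D^j\varphi_v(x)|_{\mathcal L^j(E,\reals)},$$
(with $j=0$ interpreted as sup-norm, using Hahn--Banach to identify $|\Phi(x)|_E$ with the dual sup). For the H\"older seminorm of $D^2\Phi$, \eqref{un14} together with Hahn--Banach gives
$$|D^2\Phi(x)-D^2\Phi(y)|_{\mathcal L^2(E,E)} = \sup_{|v|_{E^\star}\leq 1}|D^2\varphi_v(x)-D^2\varphi_v(y)|_{\mathcal L^2(E,\reals)},$$
and swapping the two suprema (allowed since both are pointwise suprema over independent parameters) produces
$$[D^2\Phi]_{C^\theta(E,E)} = \sup_{|v|_{E^\star}\leq 1}[D^2\varphi_v]_{C^\theta(E)}.$$
Summing these identities yields $\|\Phi\|_{C^{2+\theta}_b(E,E)} = \sup_{|v|_{E^\star}\leq 1}\|\varphi_v\|_{C^{2+\theta}_b(E)}$, and combining with the scalar estimate above gives the bound $c\,\|\Psi\|_{C^\theta_b(E,E)}$.

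The calculation is essentially bookkeeping; there is no genuine analytical obstacle since all the hard work is already done in the scalar Schauder estimate of Theorem \ref{un3} (proved in \cite{CD12} via the interpolation estimates of Proposition \ref{c2.5}, whose vectorial analogue Proposition \ref{cvec2.5} is already in hand). The mildly delicate point is verifying the identification of the vector-valued $C^{2+\theta}$ norm with a supremum over the unit ball of $E^\star$ of the scalar $C^{2+\theta}$ norms, in particular for the H\"older seminorm of $D^2\Phi$; this rests on the fact that operator norms and H\"older quotients of $E$-valued quantities are themselves duals of scalar suprema, and on the routine fact that a double supremum may be interchanged.
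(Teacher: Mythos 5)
Your reduction to the scalar case is natural, and most of the steps are sound: the identity $\varphi_v=R(\lambda,\mathcal L)\psi_v$, the bound $\|\psi_v\|_{C^\theta_b(E)}\leq|v|_{E^\star}\|\Psi\|_{C^\theta_b(E,E)}$, and the application of Theorem \ref{un3} are all correct. The gap is in the reassembly step. The identities \eqref{un14}--\eqref{un15} are stated for $\Phi\in C^j_b(E,E)$: they express the derivatives of the scalar projections $\varphi_v$ in terms of the \emph{already existing} Fr\'echet derivatives of the $E$-valued map $\Phi$. They cannot be run backwards: knowing that every $\varphi_v=\langle\Phi(\cdot),v\rangle_E$ lies in $C^{2+\theta}_b(E)$ with uniformly bounded norms only says that $\Phi$ is \emph{weakly} twice differentiable, and weak (scalar) differentiability of a Banach-space-valued map does not in general imply Fr\'echet differentiability (e.g.\ $t\mapsto(n^{-1}\sin(nt))_n$ into $c_0$ is weakly but not strongly differentiable). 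Since the conclusion of the theorem includes the assertion that $\Phi\in C^{2+\theta}_b(E,E)$, i.e.\ the existence of $D^2\Phi(x)\in{\mathcal L}^2(E,E)$, your argument as written does not deliver it; this is precisely the analytical content that the duality identities presuppose rather than provide. The fix is already in the paper: Proposition \ref{un16} (adapting \cite[Theorem 6.5.1]{C01}) gives $\widehat{P}_t\Psi\in C^3_b(E,E)$ with genuine vector-valued derivatives, and Proposition \ref{cvec2.5} with $j=2$ gives $\|\widehat{P}_t\Psi\|_{C^2_b(E,E)}\leq c(t\wedge1)^{-(2-\theta)/2}\|\Psi\|_{C^\theta_b(E,E)}$, which is integrable against $e^{-\lambda t}$; hence $\Phi=\int_0^\infty e^{-\lambda t}\widehat{P}_t\Psi\,dt$ converges in $C^2_b(E,E)$ and $\Phi$ is twice Fr\'echet differentiable. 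Once that is in hand, your duality bookkeeping (including the interchange of suprema for the H\"older seminorm of $D^2\Phi$; note also that the claimed norm ``identity'' is really a two-sided equivalence up to a fixed constant, which is harmless for \eqref{vece3.1}) correctly yields the estimate.

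With that supplement your route is valid and genuinely different from the paper's. The paper applies the duality $\Phi\mapsto\varphi_v$ at the level of the semigroup to obtain the vectorial smoothing estimates (Propositions \ref{un16} and \ref{cvec2.5}) and then reruns the Laplace-transform/splitting argument of \cite{CD12} in the vectorial setting; you apply the duality directly to the resolvent and quote the scalar Schauder estimate as a black box. Yours is shorter and isolates the new content (the norm identification), but it still needs the semigroup-level estimates to secure the strong differentiability of $\Phi$, so nothing is saved on the analytic side.
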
 

Finally, we would like to stress that, in view of \eqref{un13}, if $\Phi$ solves the equation
\[\la\,\Phi-\widehat{\L}\,\Phi=\Psi,\]
then for any $v \in\,E^\star$ the function $\varphi_v$ solves the equation
\[\la\varphi_v-\L\varphi_v=\psi_v.\]

\medskip

Now, let $\Phi \in\,C_b(E_{\e_1},E_\e)$, for some $\e_1\leq \e_0$ and $\e>0$. According to \eqref{dp161}, we have that $\Phi(X(t,x)) \in\,E_\e$, for any $t>0$ and $x \in\,E$. Therefore, as the mapping $x \in\,E\mapsto X(t,x) \in\,E_{\e_1}$ is continuous, we have that 
\[\Phi \in\,C_b(E_{\e_1},E_\e)\Longrightarrow \widehat{P}_t\Phi \in\,C_b(E,E_\e),\ \ \ \ t>0.\]
In fact, we have the following smoothing property
\begin{Lemma}
\label{l100}
If $\Phi \in\,C_b(E,E)\cap B_b(E_{\e_1},E_\e)$, for some $\e>0$ and $\e_1\leq \e_0$, then $\widehat{P}_t\Phi:E\to E_\e$ is differentiable and 
\begin{equation}
\label{dp164}
\sup_{x \in\,E}|D(\widehat{P}_t\Phi)(x)|_{\L(E,E_\e)}\leq c(t)\,(t\wedge 1)^{-\frac 12}\,\|\Phi\|_{B_b(E_{\e_1},E_\e)}.
\end{equation}
Therefore
\begin{equation}
\label{dp163}
\sup_{x \in\,E}|D(R(\la,\widehat{\L})\Phi)(x)|_{\L(E,E_\e)}\leq c\,\la^{-\frac 12}\,\|\Phi\|_{B_b(E_{\e_1},E_\e)}.
\end{equation}
\end{Lemma}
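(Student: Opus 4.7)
The plan is to derive a vector-valued Bismut–Elworthy representation for $D(\widehat P_t\Phi)(x)$ and then read off the $E_\e$-bound from the fact that, at positive times, $X(t,x)$ lives in the smoother space $E_{\e_1}$, so $\Phi$ evaluated on the path automatically takes values in $E_\e$. For $v\in E^\star$, set $\varphi_v(x)=\langle\Phi(x),v\rangle_E$; then $\varphi_v\in C_b(E)$ and (\ref{dp111}) combined with (\ref{un12}) gives
\[
\langle D(\widehat P_t\Phi)(x)h,\,v\rangle_E
=\frac1t\,\E\,\langle\Phi(X(t,x)),v\rangle_E\int_0^t\langle D_xX(s,x)h,dw(s)\rangle_H.
\]
Since this holds for every $v$, the derivative is identified with the Bochner integral
\[
D(\widehat P_t\Phi)(x)h=\frac1t\,\E\!\left[\Phi(X(t,x))\,\int_0^t\langle D_xX(s,x)h,dw(s)\rangle_H\right],
\]
to be interpreted in $E_\e$ rather than only in $E$.

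The crucial point is that the Bochner integral can indeed be taken in $E_\e$. By (\ref{dp161}), $X(t,x)\in E_{\e_0}\subseteq E_{\e_1}$ almost surely for every $t>0$, hence $|\Phi(X(t,x))|_{E_\e}\leq\|\Phi\|_{B_b(E_{\e_1},E_\e)}$ a.s. The scalar stochastic integral has second moment bounded by $\E\int_0^t|D_xX(s,x)h|_H^2\,ds\leq c(t)\,t\,|h|_E^2$ (using $E\hookrightarrow H$ together with (\ref{dp102}) and the energy estimate for $U^x_{s,0}$ in $H$ coming from (\ref{dp33})). Applying Cauchy–Schwarz inside the Bochner integral,
\[
|D(\widehat P_t\Phi)(x)h|_{E_\e}\leq\frac1t\,\|\Phi\|_{B_b(E_{\e_1},E_\e)}\sqrt{c(t)\,t}\,|h|_E,
\]
which is (\ref{dp164}) in the regime $t\leq 1$. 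For $t>1$ I use the semigroup decomposition $\widehat P_t\Phi=\widehat P_{1/2}(\widehat P_{t-1/2}\Phi)$, noting that $|\widehat P_{t-1/2}\Phi(x)|_{E_\e}\leq \|\Phi\|_{B_b(E_{\e_1},E_\e)}$ for every $x\in E$ (again by (\ref{dp161})), so the bound just proved on $\widehat P_{1/2}$ gives the $(t\wedge1)^{-1/2}$ asymptotics.

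Finally, (\ref{dp163}) is obtained by Laplace transform: integrating the pointwise $E_\e$-bound against $e^{-\lambda t}$ and using $\int_0^\infty e^{-\lambda t}(t\wedge1)^{-1/2}dt\lesssim\lambda^{-1/2}$ (absorbing the exponential factor $e^{\rho t}$ from $c(t)$ by taking $\lambda$ sufficiently large). The main obstacle I expect is the second step: justifying rigorously that the scalar Bismut–Elworthy formula applied to the test pairings $\varphi_v$ really identifies an $E_\e$-valued derivative, and not merely an $E^{\star\star}$-valued functional; this is where the a.s. $E_\e$-regularity of $\Phi(X(t,x))$ via (\ref{dp161}), together with the Bochner integrability coming from the $L^2$-estimate on the stochastic integral, is indispensable.
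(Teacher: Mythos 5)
Your proof is correct and follows essentially the same route as the paper: the Bismut--Elworthy representation $D(\widehat{P}_t\Phi)(x)\cdot h=\frac 1t\,\E\,\Phi(X(t,x))\int_0^t\langle D_xX(s,x)h,dw(s)\rangle_H$, the a.s.\ bound $|\Phi(X(t,x))|_{E_\e}\leq\|\Phi\|_{B_b(E_{\e_1},E_\e)}$ coming from \eqref{dp161}, the It\^o isometry/Cauchy--Schwarz estimate via \eqref{dp120}, and the Laplace-transform step for \eqref{dp163}. The paper states this in three lines; your additional care in justifying that the Bochner integral genuinely identifies an $E_\e$-valued derivative (rather than a weak-star object) is a reasonable elaboration of what the paper leaves implicit, not a different argument.
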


\begin{proof}
As $\Phi \in\,C_b(E,E)$, we have $\widehat{P}_t\Phi \in\,C^1_b(E,E)$ and for any $x,h \in\,E$ 
\[
D(\widehat{P}_t\Phi)(x)\cdot h=\frac 1t\E\,\Phi(X(t,x))\int_0^t\langle D_xX(s,x)h,dw(s)\rangle_H.\]
Thanks to \eqref{dp120}, as $\Phi \in\,B_b(E_{\e_1},E_\e)$, we get
\[|D(\widehat{P}_t\Phi)(x)\cdot h|_{E_\e}\leq \frac{c(t)}{t}\|\Phi\|_{B_b(E_{\e_1},E_\e)}\sqrt{t}|h|_E.\]
This implies  \eqref{dp164} and hence \eqref{dp163} .

\end{proof}

\medskip

Next, we introduce the vectorial semigroup in $H$, by defining 
\[\widehat{P}^H_t\Phi(x)=\E\,\Phi(X(t,x)),\ \ \ \ x \in\,H,\ \ \ \ t\geq 0,\]
for any $\Phi \in\,C_b(H,H)$, where $X(t,x)$ is the unique generalized solution of \eqref{e1.1} in $H$. $\widehat{\L}^H$ is the corresponding  weak generator, defined as $\widehat{\L}$.

By arguing as in the proof of Proposition \ref{un16}, from \eqref{dp130} for any $0\leq \a\leq \beta\leq 1$ we have  
\begin{equation}
\label{dp1300}
\|\widehat{P}^H_t\Phi\|_{C^\beta_b(H,H)}\leq c(t)\,(t\wedge 1)^{-\frac{\beta-\a}2}\|\Phi\|_{C^\a_b(H,H)},\ \ \ t>0.
\end{equation}
and
from Lemma \ref{lemma3.2} we have that $\widehat{P}^H_t$ maps $C_b^\a(H,H)$ into $C_b^{1+\beta}(H,H)$, for any $0\leq \a\leq \beta<1$, and 
\begin{equation}
\label{dp1270}
\|\widehat{P}^H_t\Phi\|_{C^{1+\beta}_b(H,H)}\leq c(t)\le(t\wedge 1\r)^{-(\delta+\frac{\beta-\a}2)}\,\|\Phi\|_{C^{\a}_b(H,H)},
\end{equation}
where $\d$ is the constant defined in \eqref{dp150}.

Finally, from Theorem \ref{un44}, we get that if $\Psi \in\,C_b^\theta(H,H)\cap C^\a_b(E,E)$, with $\a>0$ and $\theta >1/2$, then the series $\sum_{i=1}^\infty D^2(R(\la,\widehat{\L})\Psi)(x)(e_i,e_i)$ is convergent in $H$, uniformly with respect to $x \in\,B_R(E)$, and 
\begin{equation}
\label{dp157}
\le|\sum_{i=1}^\infty D^2(R(\la,\widehat{\L})\Psi)(x)(e_i,e_i)\r|_H\leq c\,\la^{\frac \theta 2-\frac 14}\le(1+|x|_E^{2m-1}\r)\,\|\Psi\|_{C^\theta_b(H,H)}.
\end{equation}

\section{Perturbations}
We study now suitable perturbations of the Kolmogorov operator $\L$, obtained by adding a first order term. We distinguish the case the drift if regular and then in particular there is uniqueness for the corresponding stochastic equation, and the case the drift is only H\"older continuous.

\subsection{Regular perturbations}

We are here concerned with the  operator
 \begin{equation}
 \label{e2.1}
\widehat{ \mathcal L}\Phi+D\Phi \cdot B, \ \ \ \ \Phi \in\,D(\widehat{\L}),
 \end{equation} 
where  $B\in C^1_b(E,E)$.

We consider the stochastic differential equation
\begin{equation}
\label{e2.2}
\left\{\begin{array}{l}
dY(t)=[AY(t)+F(Y(t))+B(Y(t))]\,dt + dw(t),\\
\\
Y(0)=x\in E,
\end{array}\right.
\end{equation} 
which we write in the following mild form
  \begin{equation}
\label{e2.3a}
\begin{array}{lll}
Y(t)&=&\ds e^{tA}x+\int_0^te^{(t-s)A}F(Y(s))ds+W_A(t)+\int_0^te^{(t-s)A}B(Y(s))ds.
\end{array}
\end{equation}
By reasoning as in \cite[Proposition 6.2.2]{C01} for equation \eqref{e1.4},  equation \eqref{e2.2} has a unique mild solution $Y(t,x) \in\,L^p(\Omega;C([0,T];E))$, for any $p\geq 1$ and $T>0$. 

Next lemma shows that a stochastic non-linear variation of constants formula holds, which allows to write equation 
\eqref{e2.2} in terms of the solution of equation \eqref{e1.3} and of the associated first derivative equation. The proof, that we omit, follows from the same argument used in \cite{B66}, adapted to this stochastic case.

\begin{Lemma}
\label{Lvcf}
Let $Y(t,x)$ and $X(t,x)$ be the solutions of equations \eqref{e2.2} and \eqref{e1.3}, respectively. Then we have
\begin{equation}
\label{vcf}
Y(t,x)=X(t,x)+\int_0^t U^{Y(s,x)}_{t,s}B(Y(s,x))\,ds,
\end{equation}
where  $U^x_{t,s}h$ is the solution of the first derivative equation
\[\eta^\prime(t)=A\eta(t)+F^\prime(X(t,x))\eta(t),\ \ \ \ \eta(s)=h,\]
for any $x \in\,E$ and $0\leq s\leq t$ (see \eqref{random} and \eqref{dp225} and Lemma \ref{l2.2}).

\end{Lemma}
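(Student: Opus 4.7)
The plan is to prove \eqref{vcf} by adapting the classical Alekseev--Gr\"obner nonlinear variation of constants formula of \cite{B66} to the present stochastic setting. Let $\tilde{X}(t;s,y)$ denote the mild solution of \eqref{e1.3} starting at time $s$ from $y\in E$, driven by the same cylindrical Wiener process $w$; thus $\tilde{X}(t;0,x)=X(t,x)$, the flow enjoys the cocycle property $\tilde{X}(t;r,\tilde{X}(r;s,y))=\tilde{X}(t;s,y)$ for $s\le r\le t$, and, by construction and the definition \eqref{random}--\eqref{dp225}, the linearization in the initial datum is $D_y\tilde{X}(t;s,y)\cdot h = U^y_{t,s}h$ for $h\in E$.

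For fixed $t>0$, I would consider the auxiliary process $\phi(s):=\tilde{X}(t;s,Y(s,x))$, $s\in[0,t]$. Since $\phi(0)=X(t,x)$ and $\phi(t)=\tilde{X}(t;t,Y(t,x))=Y(t,x)$, the identity \eqref{vcf} is equivalent to
\[
\phi(t)-\phi(0)=\int_0^t U^{Y(s,x)}_{t,s}\,B(Y(s,x))\,ds. \qquad (\ast)
\]
To establish $(\ast)$ I would compute $d\phi(s)$ by an It\^o chain-rule argument applied to the map $(s,y)\mapsto \tilde{X}(t;s,y)$ along the semimartingale $Y(\cdot,x)$. Three kinds of contributions arise: (i)~the derivative in the initial time $s$, which by a backward Kolmogorov identity equals $-D_y\tilde{X}(t;s,y)\cdot[Ay+F(y)]$ and cancels the transport of the unperturbed drift of $Y$; (ii)~the $dw$-integral against $D_y\tilde{X}(t;s,Y(s,x))$, which cancels the corresponding stochastic contribution to $\partial_s\tilde{X}(t;s,y)$ because both $\tilde{X}$ and $Y$ are driven by the same $w$; and (iii)~the quadratic covariation terms, which cancel analogously. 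The only surviving contribution is the one coming from the drift perturbation $B(Y(s,x))$, yielding $d\phi(s)=U^{Y(s,x)}_{t,s}B(Y(s,x))\,ds$, whence $(\ast)$ after integration.

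The main obstacle will be making this It\^o computation rigorous in the Banach space $E$, since $\tilde{X}(\cdot;s,\cdot)$ is only a mild solution and $F$ has polynomial growth, so the formal backward equation for $\partial_s\tilde{X}(t;s,y)$ must be interpreted with care. A natural route is to replace $A$ by its Yosida regularization $AJ_n$ (with $J_n=nR(n,A)$ as in Remark \ref{rem2.6}) and truncate $F$, so that the corresponding approximate flow is a classical strong solution to which the ordinary chain rule applies and the analogue of $(\ast)$ holds at the level of the approximations. The pathwise bounds from Lemmas \ref{l2.2}, \ref{l23}, \ref{dp110} and \ref{l2.7}, combined with the moment estimate \eqref{6.2.2} and the well-posedness in mild sense of both \eqref{e1.3} and \eqref{e2.2}, should then permit passage to the limit $n\to\infty$ in both sides of the approximating identity. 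Uniqueness of mild solutions to \eqref{e2.2} finally identifies $\phi(t)$ with $Y(t,x)$ and yields \eqref{vcf}.
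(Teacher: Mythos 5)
Your proposal is correct and follows essentially the same route as the paper: the authors omit the proof of Lemma \ref{Lvcf} entirely, stating only that it ``follows from the same argument used in \cite{B66}, adapted to this stochastic case,'' and \cite{B66} is precisely the Alekseev--Gr\"obner nonlinear variation-of-constants argument you carry out via the interpolating process $\phi(s)=\tilde X(t;s,Y(s,x))$. Your identification of the technical obstacles (rigorous It\^o/It\^o--Wentzell computation for the composed flow, Yosida regularization of $A$ and truncation of $F$, passage to the limit using the bounds of Lemmas \ref{l2.2}--\ref{l2.7}) supplies more detail than the paper itself provides.
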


Now, we define the corresponding transition semigroup
 \begin{equation}
 \label{E2.3}
 \widehat{Q}_t\Phi(x)=\E[\Phi(Y(t,x))],\quad \Phi\in C_b(E,E),
 \end{equation} 
whose infinitesimal generator $\widehat{\mathcal N}$  is defined in the same way we did before for the generator $\widehat{\L}$ of the semigroup $\widehat{P}_t$. This means that $\widehat{\N}$ is the $m$--dissipative operator in $C_b(E,E)$, whose domain  $D(\widehat{\mathcal N})$ is characterized as the linear space of all functions $\Phi\in C_b(E,E)$ such that  there exists the limit
\[\lim_{t\to 0}\frac{\widehat{Q}_t\Phi(x)-\Phi(x)}{t}= \widehat{\mathcal N} \Phi(x),\ \ \ x  \in\,E,\]
and 
\[\sup_{t \in\,(0,1]}\sup_{x \in\,E} \frac 1t\le|\widehat{Q}_t\Phi(x)-\Phi(x)\r|_{E}<\infty.\]
Notice that, as we are assuming $B \in\,C^2_b(E,E)$, the same arguments used for equation \eqref{e1.3} and the semigroup $\widehat{P}_t$ adapt to equation \eqref{e2.2} 
and hence we have
\[\sup_{x \in\,E}|D(\widehat{Q}_t\Phi)(x)|_{\L(E)}\leq c\,(t\wedge 1)^{-\frac 12}\|\Phi\|_{C_b(E,E)}.\]
This implies that
$D(\widehat{\N})\subset C^1_b(E,E)$ and
\begin{equation}
\label{derN}
\sup_{x \in\,E}|D((\la-\widehat{\N})^{-1}\Phi)(x)|_{\L(E)}\leq \frac c{\sqrt{\la}}\|\Phi\|_{C_b(E,E)}.
\end{equation}

\begin{Proposition}
\label{p2.1}
We have  $D(\widehat{\mathcal N})=D(\widehat{\mathcal L})$
and 
\begin{equation}
\label{e2.4}
\widehat{\mathcal N}\Phi=\widehat{\mathcal L}\Phi+D\Phi\cdot B,\ \ \ \ \Phi\in D(\widehat{\mathcal L})=D(\widehat{\N}). 
\end{equation}
\end{Proposition}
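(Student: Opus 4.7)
\noindent\emph{Plan for the proof of Proposition \ref{p2.1}.} I would split the argument into proving the inclusion $D(\widehat{\mathcal L})\subseteq D(\widehat{\mathcal N})$ together with the identity \eqref{e2.4} directly from the stochastic variation of constants formula \eqref{vcf}, and then obtaining the reverse inclusion by a resolvent fixed-point argument based on the $m$-dissipativity of $\widehat{\mathcal N}$.

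For the first step, I would fix $\Phi\in D(\widehat{\mathcal L})$, which by \eqref{vecder} lies in $C^1_b(E,E)$, and decompose
\[
\frac{\widehat{Q}_t\Phi(x)-\Phi(x)}{t}=\frac{\widehat{P}_t\Phi(x)-\Phi(x)}{t}+\E\!\left[\frac{\Phi(Y(t,x))-\Phi(X(t,x))}{t}\right].
\]
The first summand converges to $\widehat{\mathcal L}\Phi(x)$ pointwise in $x$ and is uniformly bounded for $t\in(0,1]$ by the very definition of the weak generator. For the second, I would combine \eqref{vcf} with the mean-value identity
\[
\Phi(Y(t,x))-\Phi(X(t,x))=\int_0^1 D\Phi\bigl(\zeta_t^\theta(x)\bigr)\cdot\int_0^t U^{Y(s,x)}_{t,s}B(Y(s,x))\,ds\,d\theta,
\]
where $\zeta_t^\theta(x):=\theta Y(t,x)+(1-\theta)X(t,x)$. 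Since $Y(t,x),X(t,x)\to x$ in $E$ as $t\to 0$ and $D\Phi$ is uniformly continuous on $E$, while $U^y_{t,s}$ is uniformly bounded on $E$ (Lemma \ref{l2.2} item 3 with $p=q=\infty$) and strongly continuous at the diagonal $t=s$ via Duhamel's formula and the analytic semigroup $e^{tA}$ on $E$, the average $\tfrac1t\int_0^t U^{Y(s,x)}_{t,s}B(Y(s,x))\,ds$ tends to $B(x)$ in $E$ path by path. A dominated convergence argument, with dominant $\|D\Phi\|_{C_b}\|B\|_{C_b(E,E)}\cdot\mathrm{const}\cdot e^{\rho t}$, then yields pointwise convergence of the second summand to $D\Phi(x)\cdot B(x)$, again with a uniform bound in $t$ and $x$. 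This simultaneously proves $\Phi\in D(\widehat{\mathcal N})$ and the formula \eqref{e2.4}.

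For the reverse inclusion, given $\Psi\in C_b(E,E)$ and $\lambda>0$ I would look for $\tilde\Phi\in D(\widehat{\mathcal L})$ solving the fixed-point equation
\[
\tilde\Phi=R(\lambda,\widehat{\mathcal L})\bigl(\Psi+D\tilde\Phi\cdot B\bigr).
\]
The right-hand side maps $C^1_b(E,E)$ into $D(\widehat{\mathcal L})\subset C^1_b(E,E)$, and combining the elementary bound $\|R(\lambda,\widehat{\mathcal L})\Psi\|_{C_b}\le\lambda^{-1}\|\Psi\|_{C_b}$ with \eqref{vecder} shows that it is a contraction on $C^1_b(E,E)$ with Lipschitz constant $O\bigl((\lambda^{-1}+\lambda^{-1/2})\|B\|_{C_b(E,E)}\bigr)$, hence contractive for $\lambda$ sufficiently large. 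The resulting unique fixed point $\tilde\Phi\in D(\widehat{\mathcal L})$ satisfies $(\lambda-\widehat{\mathcal N})\tilde\Phi=\Psi$ by the first step. Now, given any $\Phi\in D(\widehat{\mathcal N})$, setting $\Psi:=(\lambda-\widehat{\mathcal N})\Phi$ and using injectivity of $\lambda-\widehat{\mathcal N}$ (from its dissipativity) forces $\Phi=\tilde\Phi\in D(\widehat{\mathcal L})$.

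The delicate part is the first step: one must verify that $\tfrac1t\int_0^t U^{Y(s,x)}_{t,s}B(Y(s,x))\,ds\to B(x)$ in $E$, which requires both a uniform $\mathcal L(E)$-bound on the random evolution $U^y_{t,s}$ and its strong continuity on $E$ at $t=s$; and then justify exchanging the limit with the expectation. The $\mathcal L(E)$-bound follows from Lemma \ref{l2.2}(3) and the $E$-strong continuity from the Duhamel representation and analyticity of $A$ on $E$, while the exchange of limits is standard dominated convergence using the integrability of the random constants appearing in Lemma \ref{l2.2}.
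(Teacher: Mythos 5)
Your proof of the inclusion $D(\widehat{\mathcal L})\subseteq D(\widehat{\mathcal N})$ together with the identity \eqref{e2.4} is essentially the paper's argument: the same decomposition of $\bigl(\widehat{Q}_t\Phi-\Phi\bigr)/t$ via the stochastic variation of constants formula \eqref{vcf}, the same mean-value representation of $\Phi(Y(t,x))-\Phi(X(t,x))$ in terms of $R(t,x)=\int_0^t U^{Y(s,x)}_{t,s}B(Y(s,x))\,ds$, and the same uniform bound coming from \eqref{dp103}; you merely spell out in more detail why $\tfrac1t R(t,x)\to B(x)$ in $E$, which the paper asserts without elaboration. Where you genuinely diverge is the reverse inclusion $D(\widehat{\mathcal N})\subseteq D(\widehat{\mathcal L})$: the paper dispatches it in one line by running the same variation-of-constants computation symmetrically, using only that $D(\widehat{\mathcal N})\subset C^1_b(E,E)$, whereas you solve the resolvent equation $\tilde\Phi=R(\lambda,\widehat{\mathcal L})(\Psi+D\tilde\Phi\cdot B)$ by a contraction on $C^1_b(E,E)$ for $\lambda$ large (using \eqref{vecder}) and then invoke injectivity of $\lambda-\widehat{\mathcal N}$ to identify any $\Phi\in D(\widehat{\mathcal N})$ with the fixed point. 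Both routes are valid. Your fixed-point argument has the advantage of not requiring a second pass through the pathwise representation (it only uses the already-proved forward implication plus the $m$-dissipativity built into the definition of $\widehat{\mathcal N}$), and it anticipates the perturbation machinery the paper itself deploys later in Step 3 of Theorem \ref{t3.11} and in Lemma \ref{l54}; the paper's symmetric argument is shorter and works for every $\lambda>0$ at once, while yours needs $\lambda$ large for the contraction — harmless here, since the domain does not depend on $\lambda$, but worth stating explicitly.
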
 

\begin{proof}
In view of Lemma \ref{Lvcf} and of the fact that $D(\widehat{\L})\subset C^1_b(E,E)$, we have for any $\Phi \in\,D(\widehat{\L})$
\[\begin{array}{l}
\ds{\Phi(X(t,x))=\Phi(Y(t,x))-\le[\Phi(X(t,x)+R(t,x))-\Phi(X(t,x))\r]}\\
\vs
\ds{=\Phi(Y(t,x))-\int_0^1D\Phi(X(t,x)+\theta R(t,x))\,d\theta\cdot R(t,x),}
\end{array}\]
where
\[R(t,x)=\int_0^t U_{t,s}^{Y(s,x)}B(Y(s,x))\,ds.\]
so that
\[\frac{\widehat{Q}_t\Phi(x)-\Phi(x)}t=\frac{\widehat{P}_t\Phi(x)-\Phi(x)}t+\E\,\int_0^1D\Phi(X(t,x)+\theta R(t,x))\,d\theta\cdot \frac 1t\,R(t,x).\]
Now, for any $x \in\,E$ we have
\[\begin{array}{l}
\ds{\lim_{t\to 0}\E\,\int_0^1D\Phi(X(t,x)+\theta R(t,x))\,d\theta\cdot \frac 1t\,R(t,x)=D\Phi(x)\cdot B,}
\end{array}\]
and, due to \eqref{dp103}, for $t \in\,(0,1]$ we have
\[\le|\int_0^1D\Phi(X(t,x)+\theta R(t,x))\,d\theta\cdot \frac 1t\,R(t,x)\r|\leq c\,\|B\|_{C_b(E,E)}\,\|\Phi\|_{C^1_b(E,E)}.\]
As we are assuming that $\Phi \in\,D(\widehat{\L})$, this allows us to conclude that
\[\lim_{t\to 0}\frac{\widehat{Q}_t\Phi(x)-\Phi(x)}t=\le(\L \Phi+D\Phi\cdot B\r)(x),\ \ \ \ x \in\,E,\]
and hence $\Phi \in\,D(\widehat{\N})$ and \eqref{e2.4} holds.

The inclusion $D(\widehat{\N})\subset D(\widehat{\L})$ follows from an analogous argument, as $D(\widehat{\N})\subset C^1_b(E,E)$.

\end{proof}

\subsection{H\"older perturbations} 

Now, we aim to study the elliptic equation
\begin{equation}
\label{e3.19}
\lambda\Phi(x)-\widehat{\mathcal L}\Phi(x)-D\Phi(x)\cdot B(x)=G(x),\ \ \ \ x \in\,E,
\end{equation}
where $\lambda>0$,  $G\in C^\alpha_b(E,E)$ and  $B\in C^\alpha_b(E;E)$,  for some $\alpha\in(0,1).$
We are going to show   the following result. 
\begin{Theorem}
\label{t3.11}
Let $B\in C^\alpha_b(E;E)$  and $G\in C^\alpha_b(E,E)$, for some $\alpha\in(0,1).$
Then, for any $\la>0$  there exists a unique solution $\Phi\in D(\widehat{\mathcal L})\cap C^{2+\alpha}_b(E,E)$ of equation \eqref{e3.19}.  Moreover, for any $\e \in\,[0,2]$
  there exists $c_\epsilon>0$ (independent of $\lambda$ and $G$) such that
 \begin{equation}
 \label{e2.7c}
 \|\Phi\|_{C^{2+\alpha-\epsilon}_b(E,E)}\leq c_\e\,\le(\frac {1}{\la^{\e/2}}+ \frac{1}{\lambda}\r)\; \|G\|_{C^\alpha_b(E,E)}.
 \end{equation} 
\end{Theorem}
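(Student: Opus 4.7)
The plan is to recast \eqref{e3.19} as a fixed point equation via the resolvent of the unperturbed vectorial generator,
$$\Phi = R(\lambda,\widehat{\mathcal L})\bigl(G + D\Phi\cdot B\bigr) =: T_\lambda \Phi,$$
and to combine this identity with the vectorial Schauder estimate of Theorem \ref{vecun3}. Since $B\in C^\alpha_b(E,E)$ and $D\Phi(\cdot)\in C^\alpha_b(E,\mathcal L(E,E))$ whenever $\Phi\in C^{1+\alpha}_b(E,E)$, the source $G + D\Phi\cdot B$ lies in $C^\alpha_b(E,E)$, and so Theorem \ref{vecun3} delivers $T_\lambda\Phi\in C^{2+\alpha}_b(E,E)$ with a constant independent of $\lambda$. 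The natural working space is therefore a ball in $C^{1+\alpha}_b(E,E)$.

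To make this rigorous I would first regularize $B$ by a sequence $B_n\in C^1_b(E,E)$ with $\|B_n\|_{C^\alpha_b(E,E)}\le \|B\|_{C^\alpha_b(E,E)}$ and $B_n\to B$ locally uniformly (e.g.\ via convolution in finite-dimensional projections, as in \cite{CD12}). For each $B_n$, Proposition \ref{p2.1} identifies the generator $\widehat{\mathcal N}_n := \widehat{\mathcal L} + D\cdot B_n$ with domain $D(\widehat{\mathcal L})$, so for every $\lambda>0$ there exists a unique $\Phi_n = (\lambda - \widehat{\mathcal N}_n)^{-1} G \in D(\widehat{\mathcal L})$, and $\Phi_n$ satisfies the integral identity $\Phi_n = R(\lambda,\widehat{\mathcal L})(G + D\Phi_n\cdot B_n)$.

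The heart of the argument is a uniform-in-$n$ bound on $\|\Phi_n\|_{C^{2+\alpha}_b(E,E)}$. On the one hand, Theorem \ref{vecun3} applied to this identity yields
$$\|\Phi_n\|_{C^{2+\alpha}_b(E,E)} \le c\bigl(\|G\|_{C^\alpha_b(E,E)} + \|B\|_{C^\alpha_b(E,E)}\,\|D\Phi_n\|_{C^\alpha_b(E,\mathcal L(E,E))}\bigr)$$
with $c$ independent of $\lambda$ and $n$. On the other hand, the resolvent estimates $\|\Phi_n\|_{C_b(E,E)}\le \lambda^{-1}(\|G\|_{C_b}+\|B\|_{C_b}\|D\Phi_n\|_{C_b})$ together with \eqref{vecder} provide $\lambda$-decay at the level of $C_b(E,E)$ and of the first derivative. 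An interpolation inequality of the form
$$\|D\Phi_n\|_{C^\alpha_b(E,\mathcal L(E,E))} \le \eta\,\|\Phi_n\|_{C^{2+\alpha}_b(E,E)} + C_\eta\,\|\Phi_n\|_{C_b(E,E)}$$
then allows me to absorb the $\|D\Phi_n\|_{C^\alpha_b}$ factor, producing, for $\lambda$ sufficiently large, a bound $\|\Phi_n\|_{C^{2+\alpha}_b}\le K\|G\|_{C^\alpha_b}$ with $K$ independent of $n$. Extracting via Ascoli-type compactness on bounded subsets of $E$ a subsequential limit $\Phi_n\to\Phi$ in $C^{2+\alpha-\eta}_b(E,E)$ and passing to the limit in $\Phi_n = R(\lambda,\widehat{\mathcal L})(G + D\Phi_n\cdot B_n)$ (legitimate because $R(\lambda,\widehat{\mathcal L})$ is bounded on $C_b(E,E)$, $B_n\to B$ locally uniformly, and $D\Phi_n\to D\Phi$ locally uniformly) yields $\Phi\in D(\widehat{\mathcal L})\cap C^{2+\alpha}_b(E,E)$ solving \eqref{e3.19}.

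Uniqueness follows by applying the same a priori estimate to the difference of two solutions with $G=0$, after choosing $\lambda$ large; existence and uniqueness for every $\lambda>0$ are then recovered by a resolvent-identity continuation argument. The quantitative bound \eqref{e2.7c} at the intermediate regularity $2+\alpha-\epsilon$ drops out by interpolating between the $\lambda$-independent $C^{2+\alpha}$-bound just obtained and the $\lambda^{-1}$-decaying $C_b$-bound, the exponent $\epsilon/2$ reflecting the diffusive half-power scaling already visible in \eqref{vecder}. The principal obstacle I foresee is the interpolation inequality used to absorb $\|D\Phi_n\|_{C^\alpha_b(E,\mathcal L(E,E))}$: in the Banach-space setting, such inequalities for $E$-valued maps with $\mathcal L(E,E)$-valued derivatives are not off-the-shelf, and one must rely on the tailored interpolation analysis on $E$ developed in \cite{CD12}, which also underlies Theorem \ref{vecun3}.
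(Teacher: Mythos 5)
Your a priori machinery is essentially the paper's: the sup-norm bound $\|\Phi\|_{C_b(E,E)}\le \lambda^{-1}\|G\|_{C_b(E,E)}$ obtained through the regularized generators $\widehat{\mathcal N}_n=\widehat{\mathcal L}+D\cdot B_n$ and Proposition \ref{p2.1}, and the $\lambda$-independent $C^{2+\alpha}_b$ bound obtained from the vectorial Schauder estimate \eqref{vece3.1} plus the interpolation $\|\Phi\|_{C^{1+\alpha}_b}\le\|\Phi\|_{C^{2+\alpha}_b}^{(1+\alpha)/(2+\alpha)}\|\Phi\|_{C_b}^{1/(2+\alpha)}$ to absorb the first-order term, are exactly Steps 1 and 2 of the paper's proof. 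The genuine gap is in how you produce a solution: you propose to extract a subsequential limit of the $\Phi_n$ ``via Ascoli-type compactness on bounded subsets of $E$.'' This step fails in the present setting. Bounded subsets of $E=C([0,1])$ are not relatively compact, and a bounded sequence in $C^{2+\alpha}_b(E,E)$ --- maps between infinite-dimensional Banach spaces --- need not admit a subsequence converging even pointwise, let alone in $C^{2+\alpha-\eta}_b(E,E)$; there is no compact embedding of H\"older spaces over a non-locally-compact domain into an infinite-dimensional target. The paper avoids any compactness by the method of continuity: it deforms $\epsilon\, D\Phi\cdot B$ from $\epsilon=0$ to $\epsilon=1$, proves openness of the solvable set by a contraction argument based on \eqref{vecder}, and proves closedness by applying the a priori $C^{2+\alpha}_b$ estimate to \emph{differences} $\Phi_{\epsilon_n}-\Phi_{\epsilon_m}$, obtaining a Cauchy sequence. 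Your scheme can be repaired in the same spirit --- write the equation for $\Phi_n-\Phi_k$ with source $D\Phi_k\cdot(B_n-B_k)$ and use that $\|B_n-B_k\|_{C^{\alpha'}_b}\to0$ for $\alpha'<\alpha$ (it does \emph{not} tend to $0$ in $C^\alpha_b$, since H\"older functions are not H\"older-approximable by Lipschitz ones) to get a Cauchy sequence in $C^{2+\alpha'}_b$, then bootstrap the limit back to $C^{2+\alpha}_b$ --- but that is a completeness argument, not a compactness one, and as written your step is not valid.

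A secondary quantitative issue: interpolating between the $\lambda$-independent $C^{2+\alpha}_b$ bound and the $\lambda^{-1}$-decaying $C_b$ bound yields $\|\Phi\|_{C^{2+\alpha-\epsilon}_b}\lesssim\lambda^{-\epsilon/(2+\alpha)}\|G\|_{C^\alpha_b}$, which is strictly weaker than the claimed $\lambda^{-\epsilon/2}$ for $\lambda$ large. Estimate \eqref{e2.7c} comes instead from applying the resolvent smoothing directly: by \eqref{evec2.7}, $\|R(\lambda,\widehat{\mathcal L})\Psi\|_{C^{2+\alpha-\epsilon}_b}\le c\int_0^\infty e^{-\lambda t}(t\wedge1)^{-1+\epsilon/2}\,dt\,\|\Psi\|_{C^\alpha_b}\le c_\epsilon(\lambda^{-\epsilon/2}+\lambda^{-1})\|\Psi\|_{C^\alpha_b}$, applied to the source $\Psi=G+D\Phi\cdot B$.
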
 

\begin{proof}

 {\it Step 1}. Let  $\Phi\in D(\widehat{\mathcal N})\cap C^{2+\alpha}_b(E,E)$ be a solution of  \eqref{e3.19}. Then we have
\begin{equation}
\label{e3.20}
\|\Phi\|_{C_b(E,E)}\leq \frac1\lambda\;\|G\|_{C_b(E,E)}.
\end{equation} \medskip

 By an approximation result due to Valentine \cite{V45}, we can choose a sequence $\{B_n\}\subset C^1_b(E,E)$  uniformly convergent to $B$. Then, thanks to Proposition \ref{p2.1} we can  write equation \eqref{e3.19} as
\begin{equation}
\label{e3.21}
\lambda\Phi-\widehat{\mathcal L}\Phi -D\varphi\cdot B_n=G_n,
\end{equation}
where
\begin{equation}
\label{e3.22}
G_n(x)=G(x) +D\Phi\rangle_E(x)\cdot (B(x)-B_n(x)),\ \ \ x \in\,E.
\end{equation}
Consider now the stochastic differential equation
\begin{equation}
\label{e3.23}
\left\{\begin{array}{l}
dX_n(t)=(AX_n(t)+F(X_n(t))+B_n(X_n(t)))dt + dw(t),\\
\\
X_n(0)=x\in H,
\end{array}\right.
\end{equation} 
which has a unique solution $X_n(t,x)$.
Then, if we  introduce  the transition semigroup
 \begin{equation}
 \label{e3.24}
\widehat{ Q}^n_t\Phi(x)=\E\,\Phi(X_n(t,x)),\quad \Phi\in C_b(E,E),
 \end{equation} 
 and the corresponding generator $\widehat{\N}_n$, we have
$$
\lambda \Phi-\widehat{\mathcal N}_n \Phi=G_n.
$$
Consequently,
$$
\| \Phi\|_{C_b(E,E)}\leq \frac1\lambda\;\|G_n\|_{C_b(E,E)}.
$$
Now the conclusion follows letting $n\to\infty$.\medskip

{\it Step 2}.  There exists a constant $c>0$ such that if  $\Phi\in D(\widehat{\mathcal L})\cap C^{2+\alpha}_b(E,E)$ is a solution of  \eqref{e3.19} then
\begin{equation}
\label{e3.25}
\|\Phi\|_{C^{2+\alpha}_b(E,E)}\leq c\,\|G\|_{C^\alpha_b(E,E)}.
\end{equation} \medskip

By \eqref{e3.19} and Schauder's estimate \eqref{e3.1}, there exists $c>0$ (independent of $\la$ and $f$) such that
\[
\|\Phi\|_{C^{2+\alpha}_b(E,E)}\leq c\,(\|G\|_{C^\alpha_b(E,E)}+ \|B\|_{C^\alpha_b(E,E)} \|\Phi\|_{C^{1+\a}_b(E,E)}).
\]
Now the conclusion follows from standard interpolatory estimates,
as, by \eqref{e3.20}
\[\begin{array}{l}
\ds{\|\Phi\|_{C^{2+\alpha}_b(E,E)}\leq c\,\le(\|G\|_{C^\alpha_b(E,E)}+ \|B\|_{C^\alpha_b(E,E)}\,\|\Phi\|_{C^{2+\a}_b(E,E)}^{\frac{1+\a}{2+\a}}\|\Phi\|_{C_b(E,E)}^{\frac{1}{2+\a}}\r)}\\
\vs
\ds{\leq c^\prime\,\|G\|_{C^\alpha_b(E,E)}+\frac 12\|\Phi\|_{C^{2+\a}_b(E,E)}.}
\end{array}\]

\medskip

{\it Step 3}. For any $\e\geq 0$, let us consider the equation
\begin{equation}
\label{un6}
\la \Phi-\widehat{\L}\Phi-\e D\Phi\cdot B=G.
\end{equation}
Then, the set $
\Lambda:=\{\epsilon\in [0,1]:\; \eqref{un6} \;\mbox{has a unique solution $\Phi\in D(\widehat{\mathcal L})\cap C_b^{2+\alpha}(E)$}\} 
$
is open.

\medskip

Assume $\e_0 \in\,\Lambda$. We want to prove that for $\e$ sufficiently close to $\e_0$  equation 
\eqref{un6} has a unique solution. If we set
\begin{equation}
\label{un7}
\la \Phi-\widehat{\L}\Phi-\e_0 D\Phi\cdot B=\Psi,
\end{equation}
equation   \eqref{un6} becomes
\begin{equation}
\label{e3.19a}
\Psi- T_{\la,\e}\Psi=G,
\end{equation}
where
\begin{equation}
\label{e3.20a}
 T_{\la,\e}\Psi(x)=(\e-\e_0)DR(\lambda,\mathcal L)\Psi\cdot B.
\end{equation}
According to \eqref{der}, we  have
$$
\begin{array}{l}
\ds{\|T_{\la,\e}\Psi\|_{C_b(E,E)}\leq |\e_0-\e|\,\|B\|_{C_b(E,E)}\; \sup_{x \in\,E}|D(R(\lambda,\widehat{\mathcal L})\Psi)(x)\|_{\L(E)}}\\
\vs
\ds{\leq \frac {|\e_0-\e|}{\sqrt{\la}}\,\|B\|_{C_b(E,E)}\;\|\Psi\|_{C_b(E,E)}.}
\end{array}
$$
and
$$
\begin{array}{l}
\ds{[T_{\la,\e}\Psi]_{C^\alpha_b(E,E)}\leq  |\e_0-\e|\|B\|_{C_b(E,E)}\; [DR(\lambda,\widehat{\mathcal L})\Psi]_{C^\alpha_b(E,E)}}\\
\vs
\ds{+|\e_0-\e|\,
\|B\|_{C^\alpha_b(E,E)}\; \|D(R(\lambda,\widehat{\mathcal L})\Psi)\|_{C_b(E,E)}\leq c\frac {|\e_0-\e|}{\sqrt{\la}}\,\|B\|_{C^\alpha_b(E,E)}\|\Psi\|_{C^\alpha_b(E,E)}.}
\end{array}
$$
Consequently
\begin{equation}
\label{e3.21a}
\|T_{\la,\e}\Psi\|_{C^\alpha_b(E,E)}\leq \frac{2 c\,|\e_0-\e|}{\sqrt{\la}}\|B\|_{C^\alpha_b(E,E)}\|\Psi\|_{C^\alpha_b(E,E)}
\end{equation}
so that $T_{\la,\e}$ is a contraction on $C_b^{\alpha}(E)$ provided
$|\e_0-\e|<\sqrt{\la}/2c\,\|B\|_{C^\alpha_b(E,E)}.$
By the contraction principle, this allows to conclude that there exists $\Psi \in\,C^\a_b(E,E)$ solving \eqref{e3.19a} and, as $\e_0 \in\,\Lambda$, this implies that there exists  a unique solution $\Phi$  for equation \eqref{e3.19}, which belongs to $C^{2+\a}_b(E,E)$.

\medskip

{\it Step 4}. Conclusion.\medskip

We  use the continuity method. The set $\Lambda$ introduced above is non empty, as $0 \in\,\Lambda$. Moreover, due to the previous step, it is open. Therefore, if  we show that is closed, we have $\Lambda=[0,1]$ and the conclusion follows. Let $\epsilon_n\to \bar{\epsilon}$ with $(\epsilon_n)\subset \Lambda$. We have
$$
\lambda (\Phi_{\e_n}-\Phi_{\e_m})-\widehat{\mathcal L}(\Phi_{\e_n}-\Phi_{\e_m})-\epsilon_nD(\Phi_{\e_n}-\Phi_{\e_m})\cdot B=(\epsilon_n-\epsilon_m)D\Phi_{\epsilon_m}\cdot B.
$$
From the Schauder estimate \eqref{e3.1} and \eqref{e3.25}, we get 
\[\begin{array}{l}
\ds{\|\Phi_{\e_n}-\Phi_{\e_m}\|_{C^{2+\a}_b(E,E)}\leq c\,|\epsilon_n-\epsilon_m|\,\|B\|_{C^\a_b(E,E)}\|\Phi_{\e_m}\|_{C^{1+\a}_b(E,E)}}\\
\vs
\ds{\leq c\,|\epsilon_n-\epsilon_m|\,\|B\|_{C^\a_b(E,E)}\,\|G\|_{C^\a_b(E,E)},}
\end{array}\]
and then we conclude that $\{\Phi_{\epsilon_n}\}_{n \in\,\nat}$ is a Cauchy sequence in $C^{2+\a}_b(E,E)$. This implies that the sequence $\{\Phi_{\e_n}\}_{n \in\,\nat}$ converges to some $\bar{\Phi} \in\,D(\widehat{\mathcal L})\cap C^{2+\a}_b(E,E)$ and such $\bar{\Phi}$ is the unique solution of  equation \eqref{un6}, for $\bar{\e}$. 

\medskip

{\it Step 5}. Proof of estimate \eqref{e2.7c}.

\medskip

Due to \eqref{evec2.7}, we have
\[\|R(\la,\widehat{\L})\Phi\|_{C^{2+\a-\e}_b(E,E)}\leq c\int_0^\infty e^{-\la t}(t\wedge 1)^{-1+\frac \e 2}\,dt\,\|G\|_{C^\a_b(E,E)},\]
so that \eqref{e2.7c} follows immediately.
\end{proof}

In fact, the solution $\Phi$ of equation \eqref{e3.19} satisfies the following properties.

\begin{Lemma}
\label{l54}
Assume that $B, G \in\,C_b^\a(E,E)$, for some $\a>0$. Then, if $\Phi$ is the solution of equation \eqref{e3.19}, if $\la$ is large enough then $D\Phi(x) \in\,\L(H)$, for any $x \in\,E$, and
\begin{equation}
\label{dp242}
|D\Phi(x)-D\Phi(y)|_{\L(H)}\leq c\,|x-y|_E,\ \ \ \ x,y \in\,E.
\end{equation}

Moreover, if we also assume that $G \in\,B_b(E_{\e_1},E_\e)$, for some $\e>0$ and $\e_1\leq \e_0$, then 
\begin{equation}
\label{dp168}
|\Phi(x)-\Phi(y)|_{E_\e}\leq  c\,\la^{-\frac 12}\|G\|_{C_b(E_\e,E_\e)}|x-y|_E,\ \ \ \ x,y \in\,E.
\end{equation}
\end{Lemma}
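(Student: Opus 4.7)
\textbf{Proof plan for Lemma~\ref{l54}.}

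My starting point is to rewrite the elliptic equation \eqref{e3.19} as
\[
\Phi = R(\lambda,\widehat{\mathcal L})\,\Psi,\qquad \Psi:=G+D\Phi\cdot B,
\]
and then feed $\Psi$ into the two quantitative estimates already proved for $R(\lambda,\widehat{\mathcal L})$: \eqref{dp241} (Lipschitzianity of the derivative in the $\mathcal L(H)$-norm) for the first assertion and Lemma~\ref{l100} (smoothing into $E_\e$) for the second. Since Theorem~\ref{t3.11} already gives $\Phi\in C_b^{2+\alpha}(E,E)$ with a quantitative bound, the derivative $D\Phi$ is $\alpha$-Hölder as a map into $\mathcal L(E)$, and hence $\Psi\in C_b^{\alpha}(E,E)$.

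For the first assertion I would apply \eqref{dp241} to $\Phi=R(\lambda,\widehat{\mathcal L})\Psi$ to obtain directly
\[
|D\Phi(x)-D\Phi(y)|_{\mathcal L(H)}\leq c\,\|\Psi\|_{C^\alpha_b(E,E)}\,|x-y|_E \leq c\,|x-y|_E,
\]
which is \eqref{dp242}. To see that $D\Phi(x)$ actually extends from $\mathcal L(E)$ to $\mathcal L(H)$, I would integrate against $e^{-\lambda t}$ the vectorial analogue of \eqref{dp300}, namely $|D(\widehat P_t\Psi)(x)h|_E\leq c(t)(t\wedge 1)^{-1/2}\|\Psi\|_{C_b(E,E)}|h|_H$, which yields $|D\Phi(x)h|_E\leq c\lambda^{-1/2}\|\Psi\|_{C_b(E,E)}|h|_H$.

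For the second assertion the natural tool is Lemma~\ref{l100}, but it requires the argument of the resolvent to lie in $B_b(E_{\e_1},E_\e)$. While $G$ does by hypothesis, it is not at all obvious that $D\Phi\cdot B$ does, because a priori we only know $D\Phi(x)\in\mathcal L(E,E)$. To get around this I would rebuild $\Phi$ via a Picard iteration
\[
\Phi_0:=R(\lambda,\widehat{\mathcal L})G,\qquad \Phi_{n+1}:=R(\lambda,\widehat{\mathcal L})\bigl(G+D\Phi_n\cdot B\bigr),
\]
tracking $M_n:=\sup_{x\in E}|D\Phi_n(x)|_{\mathcal L(E,E_\e)}$. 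Lemma~\ref{l100} applied to $G$ gives $M_0\leq c\lambda^{-1/2}\|G\|_{B_b(E_{\e_1},E_\e)}$; inductively, whenever $D\Phi_n(x)\in\mathcal L(E,E_\e)$, the fact that $B(x)\in E$ yields $D\Phi_n(x)\cdot B(x)\in E_\e$ for every $x\in E$, so $G+D\Phi_n\cdot B\in C_b(E,E)\cap B_b(E,E_\e)\subset C_b(E,E)\cap B_b(E_{\e_1},E_\e)$, and a second application of Lemma~\ref{l100} produces
\[
M_{n+1}\leq c\,\lambda^{-1/2}\bigl(\|G\|_{B_b(E_{\e_1},E_\e)} + \|B\|_{C_b(E,E)}\,M_n\bigr).
\]
For $\lambda$ so large that $c\lambda^{-1/2}\|B\|_{C_b(E,E)}<1/2$ the recursion contracts, so $M_n$ stays bounded by a constant multiple of $\lambda^{-1/2}\|G\|_{B_b(E_{\e_1},E_\e)}$ and the iterates converge to the unique solution of \eqref{e3.19} furnished by Theorem~\ref{t3.11}, i.e.\ to $\Phi$ itself. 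Integrating along the segment from $x$ to $y$ then gives \eqref{dp168}.

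The main obstacle lies precisely in this bootstrap: Theorem~\ref{t3.11} provides no $E_\e$-valued gradient information whatsoever, so the extra regularity carried by $G$ has to be propagated through the nonlocal feedback term $D\Phi\cdot B$, and this can only be closed by a contraction in the norm $\sup_x|D\Phi(x)|_{\mathcal L(E,E_\e)}$ at large $\lambda$. Part 1, by contrast, is a routine consequence of the vectorial Schauder-type estimate \eqref{dp241} once one notes that $\Psi=G+D\Phi\cdot B$ is $\alpha$-Hölder.
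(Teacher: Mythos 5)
Your proposal is correct and follows essentially the same route as the paper: both rest on the fixed-point representation $\Phi=R(\lambda,\widehat{\mathcal L})(I-T_\lambda)^{-1}G$, with \eqref{dp241} giving \eqref{dp242} and Lemma~\ref{l100} propagating the $E_\e$-regularity through the feedback term $D\Phi\cdot B$. Your Picard iteration is just the Neumann series for $(I-T_\lambda)^{-1}$ on $C_b(E,E)\cap B_b(E_{\e_1},E_\e)$, which is exactly the contraction the paper uses.
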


\begin{proof}
By proceeding as in Step 3 in the proof of Theorem \ref{t3.11}, for $\la$ large enough the mapping 
\[T_\la:C_b^\a(E,E)\to C_b^\a(E,E),\ \ \ \ \Psi\mapsto T_\la \Psi=D(R(\la,\widehat{\L})\Psi)\cdot B,\]
is a contraction. Therefore, as $\Phi=R(\la,\widehat{\L})(I-T_\la)^{-1}G$, due to \eqref{dp241} we have that $D\Phi(x) \in\,\L(H)$, for any $x \in\,E$, and \eqref{dp242} holds.

In view of Lemma \ref{l100} and \eqref{dp163}, we have that for any $\Psi \in\,C_b(E,E)\cap B_b(E_{\e_1},E_\e)$ the mapping $x \in\,E\mapsto D(R(\la,\widehat{\L})\Psi)(x)\cdot B(x) \in\,E_\e$ is well defined and continuous,  and 
\[\sup_{x \in\,E}\,|D(R(\la,\widehat{\L})\Psi)(x)\cdot B(x)|_{E_\e}\leq c\,\la^{-\frac 12}\|\Psi\|_{B_b(E_{\e_1},E_\e)}.\]
This implies that if $\la$ is large enough
\[T_\la:C_b(E,E)\cap B_b(E_{\e_1},E_\e)\to C_b(E,E)\cap B_b(E_{\e_1},E_\e),\]
is a contraction. Therefore, as $\Phi=R(\la,\widehat{\L})(I-T_\la)^{-1}G$, due to \eqref{dp163} we have that $\Phi$ is continuous from $E$ into $E_\e$.

Now, for any $x, y \in\,E$, we have
\[\Phi(x)-\Phi(y)=\int_0^1D\Phi(\theta x+(1-\theta)y)\cdot(x-y)\,d\theta,\]
and then, as
\[D\Phi(\theta x+(1-\theta)y)\cdot(x-y)=D(R(\la,\widehat{\L})(I-T_\la)^{-1}G)(\theta x+(1-\theta)y)\cdot (x-y),\]
according to \eqref{dp163} we conclude
\[|\Phi(x)-\Phi(y)|_{E_\e}\leq c\,\la^{-\frac 12}\|(I-T_\la)^{-1}G\|_{B_b(E_{\e_1},E_\e)}|x-y|_E.\]

\end{proof}

Finally, we show that under stronger assumptions on $B$ and $G$, the solution $\Phi$ of equation \eqref{e3.19} has some further 
properties.

\begin{Theorem}
\label{theo5.5}
Assume that $B, G \in\,C^\a_b(E,E)\cap C^\theta_b(H,H)$, for some $\theta \in\,[0,1)$, and take $\la$  sufficiently large. Then,
\begin{enumerate}
\item we have $\Phi \in\,C^{1+\theta}_b(H,H)$;
\item if  $\theta>1/2$,   the series $\sum_{i=1}^\infty D^2\Phi(x)(e_i,e_i)$ and  $\sum_{i=1}^\infty D^2\Phi(J_nx)(J_n e_i,J_n e_i)$ are convergent in $H$, uniformly with respect to $n \in\,\nat$ and $x \in\,B_R(E)$, for any $R>0$.  In particular, for any $x \in\,E$
\begin{equation}
\label{dp160}
\lim_{n\to\infty}\sum_{i=1}^\infty D^2\Phi(J_nx)(J_n e_i,J_n e_i)=\sum_{i=1}^\infty D^2\Phi(x)(e_i,e_i)\ \ \ \text{in}\ E.
\end{equation}
\end{enumerate}

\begin{proof}
{\it Proof of 1.} According to \eqref{dp1270} we have that $\widehat{P}_t$ maps $C^\theta_b(H,H)$ into $C_b^{1+\theta}(H,H)$ with
\[\|\widehat{P}_t\Psi\|_{C^{1+\theta}_b(H,H)}\leq c(t)(t\wedge 1)^{-\d}\|\Psi\|_{C^\theta_b(H,H)},\]
where $\d$ is the constant, strictly less than $1$, defined in \eqref{dp150}.
This implies that
\begin{equation}
\label{dp1550}
R(\la,\widehat{\L}):C^\theta_b(H,H)\to C^{1+\theta}_b(H,H),
\end{equation}
and
\[\sup_{x \in\,H}|D(R(\la,\widehat{\L})\Psi)(x)h|_H\leq \frac{c}{\la^{\d-1}}\,|h|_H\|\Psi\|_{C_b(H,H)},\]
Hence, as we are assuming $B \in\,C_b^\theta(H,H)$, if we pick $\la$ large enough, we have that the mapping
\[T_\la:C^\theta_b(H,H)\to C^\theta_b(H,H),\ \ \ \ \Psi\mapsto T_\la \Psi=D(R(\la,\widehat{\L})\Psi)\cdot B,\]
is a contraction.
Now, as $\Phi=R(\la,\widehat{\L})(I-T_\la)^{-1}G$ and $G \in\,C_b^\theta(H,H)$, thanks to \eqref{dp1550}, we conclude that $\Phi \in\,C^{1+\theta}_b(H,H)$.

\medskip

{\it Proof of 2.} Due to the previous step, we have  $D\Phi\cdot B+G \in\,C_b^\theta(H,H)$. Then, as we have
\[\Phi=R(\la,\widehat{\L})\le[D\Phi\cdot B+G\r],\]
and we are assuming $\theta>1/2$, we can conclude from \eqref{dp157} and from Remark \ref{rem2.7}.

\end{proof}

\end{Theorem}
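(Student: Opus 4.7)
The plan is to rewrite equation \eqref{e3.19} in fixed-point form
\[\Phi = R(\la,\widehat{\L})\bigl[D\Phi\cdot B + G\bigr],\]
and bootstrap regularity by combining the $H$-smoothing of the resolvent, established in \eqref{dp1270}, with a contraction argument analogous to Step 3 of Theorem \ref{t3.11}, but carried out in $C^\theta_b(H,H)$ instead of $C^\a_b(E,E)$.

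For part 1, I would apply \eqref{dp1270} with $\a=\beta=\theta$ to get
\[\|\widehat{P}^H_t\Psi\|_{C^{1+\theta}_b(H,H)} \leq c\,(t\wedge 1)^{-\delta}\,\|\Psi\|_{C^\theta_b(H,H)},\]
where $\delta<1$ is the constant from \eqref{dp150}. Integrating against $e^{-\la t}$ shows that $R(\la,\widehat{\L}):C^\theta_b(H,H)\to C^{1+\theta}_b(H,H)$, with operator norm of the derivative part of order $\la^{\delta-1}\to 0$ as $\la\to\infty$. Consequently the map
\[T_\la:\Psi\mapsto D\bigl(R(\la,\widehat{\L})\Psi\bigr)\cdot B\]
sends $C^\theta_b(H,H)$ into itself (using $B\in C^\theta_b(H,H)$), and is a strict contraction for $\la$ large enough. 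Setting $\Psi:=D\Phi\cdot B+G$, one has $\Psi=T_\la\Psi+G$, hence $\Psi=(I-T_\la)^{-1}G\in C^\theta_b(H,H)$ and therefore $\Phi=R(\la,\widehat{\L})\Psi\in C^{1+\theta}_b(H,H)$.

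For part 2, assuming $\theta>1/2$, I would apply the vectorial trace estimate \eqref{dp157} to $\Psi=D\Phi\cdot B+G$. Two memberships must be checked. First, $\Psi\in C^\theta_b(H,H)$ follows from part 1 together with $B,G\in C^\theta_b(H,H)$. Second, $\Psi\in C^\a_b(E,E)$ follows from Theorem \ref{t3.11}, which gives $\Phi\in C^{2+\a}_b(E,E)$ and hence $D\Phi\in C^{1+\a}_b(E,\L(E,E))$, combined with the hypothesis $B,G\in C^\a_b(E,E)$. Then \eqref{dp157} yields convergence of $\sum_i D^2\Phi(x)(e_i,e_i)$ in $H$, uniformly on $B_R(E)$. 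The statement for $\sum_i D^2\Phi(J_n x)(J_ne_i,J_ne_i)$ follows by the same reasoning using Remark \ref{rem2.7}, which upgrades the uniform convergence of Lemma \ref{dp110} to the regularised arguments; the resulting uniformity in $n$ then permits passing to the limit to obtain \eqref{dp160}.

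The main obstacle I anticipate is part 1, specifically verifying that $T_\la$ genuinely contracts on $C^\theta_b(H,H)$: this depends on the crucial fact that the exponent $\delta$ in \eqref{dp150} is strictly less than $1$, which is what makes the time integral $\int_0^\infty e^{-\la t}(t\wedge 1)^{-\delta}\,dt$ finite and small for large $\la$, and on the fact that the pointwise product of a $C^\theta_b(H,\L(H,H))$ operator-valued function with a $C^\theta_b(H,H)$ vector-valued function remains in $C^\theta_b(H,H)$. Once part 1 is in place, part 2 is a bookkeeping exercise in applying the already-prepared trace bound, and the $J_n$-limit reduces to invoking the uniform estimates recorded in Remark \ref{rem2.7}.
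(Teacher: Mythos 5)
Your proposal is correct and follows essentially the same route as the paper: part 1 via the smoothing estimate \eqref{dp1270} with $\a=\beta=\theta$, the resulting mapping property $R(\la,\widehat{\L}):C^\theta_b(H,H)\to C^{1+\theta}_b(H,H)$ with derivative norm $O(\la^{\delta-1})$, and the contraction $T_\la$ on $C^\theta_b(H,H)$ giving $\Phi=R(\la,\widehat{\L})(I-T_\la)^{-1}G$; part 2 by applying \eqref{dp157} and Remark \ref{rem2.7} to $\Psi=D\Phi\cdot B+G$. Your explicit verification that $\Psi\in C^\a_b(E,E)$ (via Theorem \ref{t3.11}) is a detail the paper leaves implicit, but it is the same argument.
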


\section{Pathwise uniqueness}

We want to prove that pathwise uniqueness holds in the class of mild solutions of the equation
\begin{equation}
\label{perturbe1.3}
\left\{\begin{array}{lll}
dY(t)=[A Y(t)+F(Y(t))+B(Y(t))]dt+dw(t),\\
\\
Y(0)=x,
\end{array}\right.
\end{equation}
where $A$, $F$ and $W$ are as in section \ref{sec2} and $B$ satisfies the following condition.

\begin{Hypothesis}
\label{H2}
There exist $\a, \e>0$ and $\e_1\leq \e_0$ such that 
\[B \in\,C_b^\a(E,E)\cap B_b(E_{\e_1},E_\e).\]

\end{Hypothesis}

\begin{Remark}
{\em We have already seen that the mappings $B$ described in Subsection \ref{subsection examples} are both in $C^\a_b(E,E)$. 
Moreover, they belong to $B_b(E_{\e_1},E_\e)$, for suitable positive constants as in Hypothesis \ref{H2}.

Let \[
B\left(  x\right)  \left(  \xi\right)  =b\left(  x\left(  \xi_{0}\right)
\right)  g\left(  \xi\right),\ \ \ \ \xi \in\,[0,1],
\]
for some $g\in E$, $\xi_{0}\in\left[  0,1\right]  $ and  $b\in C_{b}^{\alpha}\left(
\mathbb{R},\mathbb{R}\right) $. If we assume that $g \in\,E_\e$, then $B$ maps $E_\e$ into $E_\e$.
Actually,
for any $\xi_1, \xi_2 \in\,[0,1]$ we have
\[B(x)(\xi_1)-B(x)(\xi_2)=b(x(\xi_0))(g(\xi_1)-g(\xi_2)),\]
so that
\[[B(x)]_{E_\e}\leq \|b\|_\infty\,[g]_{E_\e}.\]

\medskip

Now, let 
\[
B\left(  x\right)  \left(  \xi\right)  =b\left(  \max_{s\in\left[
0,\xi\right]  }x\left(  s\right)  \right),\ \ \ \ \xi \in\,[0,1],
\]
for some $b\in C_{b}^{\alpha}\left(
\mathbb{R},\mathbb{R}\right) $. Then,
$B$ maps $E_\e$ into $E_{\e\a}$. Actually, for any $\xi_1, \xi_2 \in\,[0,1]$, with $\xi_1>\xi_2$,  we have
\[\begin{array}{l}
\ds{|B(x)(\xi_1)-B(x)(\xi_2)|\leq [b]_{C^\a(\reals)}|\max_{s\leq \xi_1}x(s)-\max_{s\leq \xi_2}x(s)|^\a}\\
\vs
\ds{=[b]_{C^\a(\reals)}(x(\bar{\xi}_1)-x(\bar{\xi}_2))^\a,}
\end{array}\]
for some $\bar{\xi}_i\leq \xi_i$, $i=1,2$. If $\bar{\xi}_1\leq \xi_2$, then $x(\bar{\xi}_1)=x(\bar{\xi}_2)$ and we are done. Thus, assume $\xi_2\leq \bar{\xi}_1\leq \xi_1$. We have 
\[\begin{array}{l}
\ds{0\leq x(\bar{\xi}_1)-x(\bar{\xi}_2)\leq x(\bar{\xi}_1)-x(\xi_2)\leq |x(\bar{\xi}_1)-x(\xi_2)|\leq |\bar{\xi}_1-\xi_2|^\e\leq |\xi_1-\xi_2|^\e.}
\end{array}\]

}
\end{Remark}

\bigskip

As in \cite{DF10}, the main idea here is to represent the {\em bad term} $B(Y(t))$ in terms of nicer objects, by using It\^o's formula.

To this purpose, we show how we can point-wise approximate the mapping $B$ by nicer mappings $B_m$.

\begin{Lemma}
\label{l5.2}
Under Hypothesis  \ref{H2}, there exists a sequence
$\{B_m\}_{m \in\,\nat}\subset C^\a_b(E,E)\cap C^\infty_b(H,E)$ such that
\[\le\{\begin{array}{l}
\ds{\lim_{m\to \infty}|B_m(x)-B(x)|_E=0,\ \ \ \ x \in\,E,}\\
\vs
\ds{\sup_{m \in\,\nat}\|B_m\|_{C^\a_b(E,E)}<\infty.}
\end{array}\r.\]
\end{Lemma}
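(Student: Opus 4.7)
Plan: The plan is to construct $B_m$ as a Gaussian-mollified Yosida approximant, combining two smoothing mechanisms. Since $B$ is only defined on $E$ whereas $B_m$ must be defined on all of $H$, I first pre-compose with the Yosida approximation $J_m := mR(m,A)$, which maps $H$ continuously into $D(A) \subset E$, is a contraction on $E$ (so $\|J_m\|_{\L(E)} \leq 1$, since $A$ generates a contraction semigroup on $E$), and satisfies $J_m x \to x$ in $E$ for every $x \in E$. To generate $C^\infty$ dependence in the $H$-directions, I then convolve with a sequence $\mu_m$ of centered Gaussian measures on $H$ chosen so that: (i) $\mu_m$ is supported on $E$; (ii) the Cameron-Martin space $K_m = Q_m^{1/2}(H)$ of $\mu_m$ contains $D(A)$, and hence $J_m H$; and (iii) $\mu_m$ concentrates at the origin, i.e.\ $\E|Z_m|_E^p \to 0$ for $Z_m \sim \mu_m$ and every $p \geq 1$. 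A concrete choice is $Q_m = m^{-1}(I-A)^{-\beta}$ with $\beta \in (1/2,1]$. The proposed definition is
\[
B_m(x) := \int_H B(J_m x + z)\,\mu_m(dz), \quad x \in H,
\]
which is well-defined because $J_m x + z \in E$ for $\mu_m$-a.e.\ $z$.

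The uniform H\"older bound follows at once from $\|J_m\|_{\L(E)} \leq 1$: for $x, x' \in E$,
\[
|B_m(x) - B_m(x')|_E \leq \|B\|_{C^\alpha_b(E,E)}\,|J_m x - J_m x'|_E^\alpha \leq \|B\|_{C^\alpha_b(E,E)}\,|x - x'|_E^\alpha,
\]
together with $|B_m|_{C_b(H,E)} \leq \|B\|_{C_b(E,E)}$. Pointwise convergence on $E$ then follows from the split
\[
|B_m(x) - B(x)|_E \leq \|B\|_{C^\alpha_b(E,E)}\,\E|Z_m|_E^\alpha + |B(J_m x) - B(x)|_E,
\]
where both terms vanish as $m \to \infty$, the first by (iii) and the second by the strong continuity of $J_m$ in $E$ combined with the continuity of $B$.

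To obtain $B_m \in C^\infty_b(H,E)$ I would use Cameron-Martin translation. For any $h \in H$ the direction $J_m h$ lies in $K_m$ by (ii), so the translated measure $\mu_m(\,\cdot\, - J_m h)$ is absolutely continuous with respect to $\mu_m$ with an exponential Radon-Nikodym density $\Delta_m(h, z)$ which is analytic in $h$ and has all moments in every $L^p(\mu_m)$ finite. Then
\[
B_m(x+h) = \int_H B(J_m x + z)\,\Delta_m(h, z)\,\mu_m(dz),
\]
and differentiating $k$ times in $h$ under the integral expresses the $k$-th Fr\'echet derivative of $B_m$ at $x$ as an $E$-valued Bochner integral pairing $B(J_m x + z)$ against a polynomial in Wiener integrals. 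Dominated convergence in the Banach space $E$ justifies the exchange of differentiation and integration and yields $B_m \in C_b^\infty(H,E)$ with bounds on $D^k B_m$ that depend on $m$ but not on $x$.

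The main obstacle is this last step: carrying out the Malliavin-type differentiation with target space $E$ rather than $\mathbb R$. One must verify that every difference quotient converges in the strong $E$-topology (not merely weakly or in $H$), that the resulting multilinear maps $H^k \to E$ are bounded, and that the standard Cameron-Martin calculus extends cleanly to $E$-valued integrands -- a point requiring some care because the natural framework for Malliavin calculus is Hilbert-valued. The verifications of (i)--(iii) are by comparison routine, relying on standard Gaussian field regularity on $[0,1]$ and the spectral calculus of $-A$.
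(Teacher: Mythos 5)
Your construction is correct in substance but follows a genuinely different route from the paper's. The paper also composes $B$ with a regularizing linear map that is uniformly bounded on $E$ and converges strongly to the identity there, but instead of the Yosida approximants $J_m=mR(m,A)$ it uses the Fej\'er (Ces\`aro) means $\hat P_m=\frac1m\sum_{k\le m}P_k$ of the Fourier partial sums --- chosen precisely because $\sup_m\|\hat P_m\|_{\mathcal L(E)}<\infty$ while the raw partial sums $P_m$ are not uniformly bounded on $C([0,1])$; your observation that $\|J_m\|_{\mathcal L(E)}\le1$ (by the maximum principle) plays exactly the same role. The decisive difference is the mollification: since $\hat P_m x$ depends on $x$ only through the finitely many coordinates $(x_1,\dots,x_m)$, the paper convolves with a compactly supported $C^\infty_c(\mathbb R^m)$ density, so that after a change of variables the $x$-dependence sits entirely inside $\rho_m(\cdot+Q_mx)$ and $B_m\in C^\infty_b(H,E)$ is elementary finite-dimensional calculus. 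You instead convolve with an infinite-dimensional Gaussian measure and obtain smoothness via Cameron--Martin translation, which is exactly the step you leave as a plan. That step does go through --- the density is scalar, the integrand $B(J_mx+\cdot)$ is a bounded $E$-valued Bochner-integrable function, the difference quotients of the density converge in every $L^p(\mu_m)$, and $\|D^kB_m\|\le c_k\|B\|_{C_b(E,E)}\|Q_m^{-1/2}J_m\|^k_{\mathcal L(H)}$ --- so your worry about a genuinely ``Hilbert-valued Malliavin calculus'' is overstated; but it is real work that the paper's finite-dimensional reduction avoids entirely. The Hölder bound, the boundedness in $C^\alpha_b(E,E)$, and the pointwise convergence argument are essentially identical in the two proofs (the paper's error term $|\hat P_mT_m\xi|_E^\alpha$ corresponds to your $\mathbb E|Z_m|_E^\alpha$). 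In short: your approach is valid and self-contained once the Cameron--Martin differentiation is written out, but the paper's is lighter because all the smoothing happens in $\mathbb R^m$.
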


\begin{proof}
For any $m \in\,\nat$ we define
\[T_m \xi=\sum_{k=1}^m\xi_k e_k,\ \ \ \xi \in\,\reals^m,\ \ \ \ \ Q_mx=(x_1,\ldots,x_m),\ \ \ P_m x=\sum_{k=1}^m x_ke_k,\ \ \ x \in\,H,\]
where $x_k=\le<x,e_k\r>_H$. If we define
\[\hat{P}_mx =\frac 1m\sum_{k=1}^m P_k x,\ \ \ \ x \in\,H,\]
then
Fej\'er's Theorem states that $\hat{P}_mx$ converges to $x$ in $E$, as $m\uparrow \infty$, when $x \in\,E$. In particular, as a consequence of the uniform boundedness theorem, 
\begin{equation}
\label{fine1}
\sup_{m \in\,\nat}\|\hat{P}_m\|_{\L(E)}<\infty.
\end{equation}
Now, as for any $x \in\,H$ we have $\hat{P}_m x \in\,E$, we can define
\[B_m(x)=\int_{\reals^m}B(\hat{P}_m(x-T_m \xi))\rho_{m}(\xi)\,d\xi,\ \ \ \ x \in\,H,\]
where $\rho_m \in\,C_c^\infty(\reals^m)$ is a probability density with support in $\{ \xi \in\,\reals^m,\ |\xi|_{\reals^m}\leq 1/m^2\}$. 
We have
clearly that $B_m:H\to E$ and due to \eqref{fine1} for any $x,y \in\,E$
\[\begin{array}{l}
\ds{|B_m(x)-B_m(y)|_E\leq \int_{\reals^m}|B(\hat{P}_m(x-T_m \xi))-B(\hat{P}_m(y-T_m \xi))|_E\,\rho_{m}(\xi)\,d\xi}\\
\vs
\ds{\leq [B]_{C^\a(E,E)}|\hat{P}_mx-\hat{P}_my|_E^\a\int_{\reals^m}\rho_{m}(\xi)\,d\xi\leq c\,|x-y|^\a_E.}
\end{array}\]
This implies that $\{B_m\}_{m \in\,\nat}$ is a bounded sequence in $C^\a(E,E)$. 

Moreover, as $\hat{P}_{m_1} P_{m_2}=\hat{P}_{m_1}$, for any $m_1\leq m_2$, with a change of variable we have
\[B_m(x)=\int_{\reals^m}B(\hat{P}_m(x-T_m \xi))\,\rho_{m}(\xi)\,d\xi=\int_{\reals^m}B(\hat{P}_m T_m \eta)\,\rho_{m}(\eta+Q_m x)\,d\eta,\]
and, as $\rho_m$ is in $C_c^\infty(\reals^m)$, this implies that $B_m \in\,C^\infty_b(H,E)$.

Finally, for any $x \in\,E$ we have
\[\begin{array}{l}
\ds{|B_m(x)-B(x)|_E\leq c_\a\,[B]_{C^\a(E,E)}\int_{\reals^m}\le(|\hat{P}_mx-x|_E^\a+|\hat{P}_m T_m \xi|_E^\a\r)\,\rho_{m}(\xi)\,d\xi,}
\end{array}\]
and then,
as for any $|\xi|_{\reals^m}\leq 1/m^2$ we have
\[|\hat{P}_m T_m \xi|_E\leq \frac 1m\sum_{k\leq m}\sum_{i\leq k}|\xi_i|\leq \frac {m(m+1)}{2 m^3}\to 0,\ \ \ \ \text{as}\ m\to \infty,\]
recalling that $|\hat{P}_mx-x|_E\to 0$, we conclude that $B_m(x)$ converges to $B(x)$ in $E$, for any $x \in\,E$.

\end{proof}

Now we define 
\[Y_n(t,x)=J_n Y(t,x),\]
where $J_n=nR(n,A)$, we have
\begin{equation}
\label{un20}
\le\{\begin{array}{l}
\ds{dY_n(t)=\le[A Y_n(t)+J_n F(Y(t))+J_n B(Y(t))\r]\,dt+J_n dw(t),}\\
\vs
\ds{Y_n(0)=J_n x.}
\end{array}
\r.
\end{equation}

Notice that, if $Y(t,x)$ is a mild solution  of equation \eqref{perturbe1.3}, we have that $Y_n(t,x)$ is a strong solution of equation \eqref{un20}, that is
\begin{equation}
\label{un25}
Y_n(t,x)=J_n x+\int_0^t \le[AY_n(s,x)+J_n F(Y(s,x))+J_n B(Y(s,x))\r]\,ds+J_nW(t).
\end{equation}
Moreover,
\begin{equation}
\label{dp170}
\begin{cases}
\ds{|Y_n(t,x)|_E\leq |Y(t,x)|_E,}\\
\vs
\ds{\lim_{n\to \infty}|Y_n(t,x)-Y(t,x)|_E=0,}
\end{cases}\end{equation}
for any $t\geq 0$ and $x \in\,E$, $\Pro$-a.s.

Now, for each $\la>0$ we consider the elliptic equation 
\begin{equation}
\label{un24}
\la \Phi_m-\widehat{\L}\Phi_m-D\Phi_m\cdot B_m=B_m,
\end{equation}
where $B_m$ is the  mapping introduced in Lemma \ref{l5.2}. Later on we will choose $\la>0$ large enough. We denote by $\Phi_m$ its unique solution. According to what we have seen in Theorem \ref{t3.11}, we have that $\Phi_m \in\,C^{2+\a}_b(E,E)$
and as the sequence $\{B_m\}_{m \in\,\nat}$ is equi-bounded in $C^\a_b(E,E)$, we have 
\begin{equation}
\label{dp169}
\sup_{m \in\,\nat}\|\Phi_m\|_{C^{2+\a}_b(E,E)}<\infty.
\end{equation} 

\begin{Lemma}
\label{lemma-dp200}
If $\la$ is large enough, we have
\begin{equation}
\label{dp205}
\lim_{m\to \infty}\,|\Phi_m(x)-\Phi(x)|_{E}=0,\ \ \ \ x \in\,E,
\end{equation}
and
\begin{equation}
\label{dp210}
\lim_{m\to \infty}|D\Phi(x)-D\Phi_m(x)|_{\L(H,E)}=0,\ \ \ \ x \in\,E.
\end{equation}
\end{Lemma}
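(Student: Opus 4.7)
The plan is to subtract the two elliptic equations and reduce \eqref{dp205}--\eqref{dp210} to a Neumann-series argument in which every term involves $R(\lambda,\widehat{\L})$ (or its first derivative) applied to a function that converges pointwise in $E$ to zero. Starting from
\[
\lambda\Phi-\widehat{\L}\Phi=D\Phi\cdot B+B,\qquad \lambda\Phi_m-\widehat{\L}\Phi_m=D\Phi_m\cdot B_m+B_m,
\]
subtracting and rearranging yields, for $u_m:=\Phi_m-\Phi$,
\[
u_m = S^m_\lambda u_m + g_m,
\]
with $S^m_\lambda v := R(\lambda,\widehat{\L})[Dv\cdot B_m]$ and $g_m := R(\lambda,\widehat{\L})h_m$, where $h_m := D\Phi\cdot(B_m-B)+(B_m-B)$. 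By Step 3 of the proof of Theorem \ref{t3.11}, for $\lambda$ large enough $S^m_\lambda$ is a contraction on $C^{1+\alpha}_b(E,E)$ with constant $\kappa<1$ \emph{independent of $m$}, since $\sup_m\|B_m\|_{C^\alpha_b(E,E)}<\infty$ by Lemma \ref{l5.2}. Hence $u_m=\sum_{k\geq 0}(S^m_\lambda)^k g_m$, with the series converging geometrically and $\sup_m\|u_m\|_{C^{1+\alpha}_b(E,E)}<\infty$. The key observation is that the \emph{outer} resolvent is $R(\lambda,\widehat{\L})$, involving the unperturbed process $X(t,x)$ which does not depend on $m$.

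The main analytic ingredient is the following stability property. If $\{\Psi_m\}\subset C_b(E,E)$ is equibounded and $\Psi_m(y)\to\Psi(y)$ in $E$ for every $y\in E$, then for every $x\in E$,
\[
R(\lambda,\widehat{\L})\Psi_m(x)\longrightarrow R(\lambda,\widehat{\L})\Psi(x)\;\text{ in }E,\qquad
D(R(\lambda,\widehat{\L})\Psi_m)(x)\longrightarrow D(R(\lambda,\widehat{\L})\Psi)(x)\;\text{ in }\L(H,E).
\]
The first assertion is dominated convergence applied to $\widehat{P}_t\Psi_m(x)=\E\,\Psi_m(X(t,x))$: since $X(t,x)$ is $m$-independent, $\Psi_m(X(t,x))\to\Psi(X(t,x))$ in $E$ $\Pro$-a.s., followed by a second dominated convergence in the Laplace integral. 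The second uses the vectorial Bismut--Elworthy formula
\[
D(\widehat{P}_t\Psi)(x)h=\frac{1}{t}\,\E\,\Psi(X(t,x))\int_0^t\langle D_xX(s,x)h,dw(s)\rangle_H,
\]
which, combined with Cauchy--Schwarz, the It\^o isometry, and the bound $|D_xX(s,x)h|_H\leq c|h|_H$ from Lemma \ref{l23}, gives
\[
\sup_{|h|_H\leq 1}\bigl|D(\widehat{P}_t\Psi_m)(x)h-D(\widehat{P}_t\Psi)(x)h\bigr|_E \leq c\,t^{-1/2}\bigl(\E|\Psi_m(X(t,x))-\Psi(X(t,x))|_E^2\bigr)^{1/2}\to 0
\]
pointwise in $(t,x)$. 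Integrating against $e^{-\lambda t}$, with integrable majorant $c\,e^{-\lambda t}t^{-1/2}\sup_m\|\Psi_m\|_{C_b(E,E)}$, yields $\L(H,E)$-convergence.

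I would now iterate this stability property along the Neumann series. Since $B_m\to B$ pointwise in $E$ and $D\Phi$ is bounded in $\L(E,E)$, we have $h_m(y)\to 0$ in $E$ for every $y\in E$ with $\sup_m\|h_m\|_{C_b(E,E)}<\infty$; hence $g_m(x)\to 0$ in $E$ and $Dg_m(x)\to 0$ in $\L(H,E)$ for every $x\in E$. Because $|B_m(y)|_H$ is uniformly bounded and $E\hookrightarrow H$ continuously, this forces $Dg_m(y)\cdot B_m(y)\to 0$ in $E$ pointwise, while $\{Dg_m\cdot B_m\}$ remains equibounded in $C_b(E,E)$; applying the stability property again yields $S^m_\lambda g_m(x)\to 0$ in $E$ and $D(S^m_\lambda g_m)(x)\to 0$ in $\L(H,E)$. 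Iterating shows that, for each $k\geq 0$, both $(S^m_\lambda)^k g_m(x)\to 0$ in $E$ and $D[(S^m_\lambda)^k g_m](x)\to 0$ in $\L(H,E)$, as $m\to\infty$. Finally, for any $\delta>0$ truncate the Neumann series at some $N$ so that the geometric tail has $C^{1+\alpha}_b(E,E)$-norm smaller than $\delta$ uniformly in $m$; the pointwise vanishing of each of the first $N$ terms together with the tail estimate deliver both \eqref{dp205} and \eqref{dp210}.

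The main obstacle is keeping the stability property usable under the iteration. Pointwise convergence of $B_m$ in $E$ is a weak hypothesis, and the derivative $D(R(\lambda,\widehat{\L})h_m)$ must be controlled in $\L(H,E)$ rather than merely $\L(E,E)$ so that the subsequent evaluation at $B_m(y)\in E\hookrightarrow H$ still produces an $E$-small error. This is precisely where the Bismut--Elworthy representation, together with the Hilbert-space structure of $H=L^2(0,1)$, becomes indispensable even though the ambient Banach space is $E=C([0,1])$.
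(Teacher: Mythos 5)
Your proof is correct and follows essentially the same route as the paper's: both reduce to the identity $\Phi_m-\Phi=R(\la,\widehat{\L})(\Psi_m-\Psi)$ (your $h_m+Du_m\cdot B_m$ \emph{is} $\Psi_m-\Psi$), both rely on the uniform-in-$m$ contraction for large $\la$ coming from $\sup_m\|B_m\|_{C^\a_b(E,E)}<\infty$, and both conclude by the dominated-convergence stability of $R(\la,\widehat{\L})$ and of $D R(\la,\widehat{\L})$ in $\L(H,E)$ via the Bismut--Elworthy formula; the only organizational difference is that the paper runs the Neumann series for the $m$-independent contraction $T_\la$ on $\Psi-\Psi_m$ and applies a single outer resolvent, whereas you run it for the $m$-dependent $S^m_\la$ on $\Phi_m-\Phi$ directly. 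One small point to tighten: the $C^{1+\a}_b(E,E)$-smallness of the tail controls $|D(\cdot)(x)|_{\L(E,E)}$ but not directly $|D(\cdot)(x)|_{\L(H,E)}$ as needed for \eqref{dp210}; this is repaired by noting that each tail term is $R(\la,\widehat{\L})$ of a function with geometrically small $C_b(E,E)$-norm, so the bound $\sup_x\|D(R(\la,\widehat{\L})\Psi)(x)\|_{\L(H,E)}\leq c\,\la^{-1/2}\|\Psi\|_{C_b(E,E)}$, already implicit in your stability step, gives the required geometric decay of the tail in $\L(H,E)$ as well.
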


\begin{proof}
We have
\[\Phi-\Phi_m=R(\la,\widehat{\L})\le(\Psi-\Psi_m\r),\]
where
\[\Psi=(I-T_\la)^{-1}B,\ \ \ \ \Psi_m=(I-T_{\la,m})^{-1}B_m,\]
and
\[T_{\la}\Psi(x)=D(R(\la,\widehat{\L})\Psi)(x)\cdot B(x),\ \ \ \ T_{\la,m}\Psi(x)=D(R(\la,\widehat{\L})\Psi)(x)\cdot B_m(x),\ \ \ \ x \in\,E.\]
Therefore, if we show that the sequence $\{\Psi_m\}_{m \in\,\nat}$ is bounded in $C_b(E,E)$ and
\begin{equation}
\label{fine2}
\lim_{m\to \infty}|\Psi(x)-\Psi_m(x)|_E=0,\end{equation}
in view of what we have seen in Section \ref{sec4}, it is immediate to check  that 
\begin{equation}
 \label{fime3}
 \lim_{m\to\infty}|R(\la,\widehat{\L})(\Psi-\Psi_m)(x)|_E=0,\ \ \ \ x \in\,E,
\end{equation}
 and 
\begin{equation}
 \label{fime4}
 \lim_{m\to\infty}|D(R(\la,\widehat{\L})(\Psi-\Psi_m))(x)|_{\L(H,E)}=0,\ \ \ x \in\,E,
\end{equation}
 and \eqref{dp205} and \eqref{dp210} follow.
 
 We have
\[\begin{array}{l}
\ds{(\Psi-\Psi_m)-T_{\la}(\Psi-\Psi_m)=\le[\Psi-T_{\la}\Psi\r]-\le[\Psi_m-T_{\la,m}\Psi_m\r]+
D(R(\la,\widehat{\L})\Psi_m)\cdot(B-B_m)}\\
\vs
\ds{=\le[I+D(R(\la,\widehat{\L})\Psi_m)\r]\cdot(B_m-B).}
\end{array}\]
If $\la>0$ is large enough, the mapping $T_{\la}:C_b(E,E)\to C_b(E,E)$ is a contraction and then
\[\begin{array}{l}
\ds{\Psi-\Psi_m=(I-T_{\la})^{-1}\le[I+D(R(\la,\widehat{\L})\Psi_m)\r]\cdot(B_m-B).}
\end{array}\]
Now, due to \eqref{vecder}, for any $x \in\,E$ we have
\[|D(R(\la,\widehat{\L})\Psi_m)\cdot(B_m-B)(x)|_E\leq \frac{c}{\sqrt{\la}}\|\Psi_m\|_{C_b(E,E)}|B_m(x)-B(x)|_E,\]
so that
\[\lim_{m\to\infty}\le| \le[I+D(R(\la,\widehat{\L})\Psi_m)\r]\cdot(B_m-B)(x)\r|_E=0,\ \ \ \ x \in\,E,\]
and
\[\sup_{m \in\,\nat}\,\le\|\le[I+D(R(\la,\widehat{\L})\Psi_m)\r]\cdot(B_m-B)\r\|_{C_b(E,E)}<\infty.\]
According to \eqref{fime4}, this implies
\[\lim_{m\to \infty}|T_\la \le[I+D(R(\la,\widehat{\L})\Psi_m)\r]\cdot(B_m-B)(x) |_E=0,\ \ \ \ x \in\,E.\]
Therefore, as
\[\begin{array}{l}
\ds{(I-T_\la)^{-1}\le[I+D(R(\la,\widehat{\L})\Psi_m)\r]\cdot(B_m-B)}\\
\vs
\ds{=\sum_{k=0}^\infty T_\la^k\le[I+D(R(\la,\widehat{\L})\Psi_m)\r]\cdot(B_m-B),}
\end{array}\]
and $T_\la$ is a contraction, we conclude that
\[\lim_{m\to\infty}|(I-T_\la)^{-1}\le[I+D(R(\la,\widehat{\L})\Psi_m)\r]\cdot(B_m-B)(x)|_E=0,\ \ \ \ x \in\,E,\]
so that \eqref{fine2} follows.

\end{proof}

As $\Phi_m$ belongs to $C^2_b(E,E)$, and
 $X_n(t,x)$ solves equation \eqref{un25}, we can use the generalization of the Ito formula in the space of continuous functions, proved in \cite[Appendix A]{CD12bis}, and we have
 \[\begin{array}{l}
 \ds{d\Phi_m(Y_n(t,x))=\le[\frac 12\sum_{i=1}^\infty D^2\Phi_m(Y_n(t,x))(e_i,e_i)+D\Phi_m\cdot B_m(Y_n(t,x))\r]\,dt}\\
 \vs
 \ds{+D\Phi_m(Y_n(t,x))\cdot J_n dw(t)+R_{n,m}(t)\,dt,}
 \end{array}\]
 where
 \[\begin{array}{l}
 \ds{R_{n,m}(t)=D\Phi_m(Y_n(t,x))\cdot\le[J_n F(Y(t,x))-F(Y_n(t,x))\r]}\\
 \vs
 \ds{+D\Phi_m(Y_n(t,x))\cdot\le[J_n B(Y(t,x))-B_m(Y_n(t,x))\r]}\\
 \vs
 \ds{+\frac 12\sum_{i=1}^\infty D^2\Phi_m(Y_n(t,x))(J_ne_i,J_ne_i)-\frac 12\sum_{i=1}^\infty D^2\Phi_m(Y_n(t,x))(e_i,e_i).}
 \end{array}\]
 Therefore, since $\Phi_m$ solves equation \eqref{un24}, we have
\[\begin{array}{l}
\ds{ d\Phi_m(Y_n(t,x))=\la\,\Phi_m(Y_n(t,x))\,dt-B_m(Y_n(t,x))\,dt}\\
\vs
\ds{+D\Phi_m(Y_n(t,x))\cdot J_n dw(t)+R_{n,m}(t)\,dt,}
\end{array}\]
and then, for any $0\leq s\leq t$ we have
\[\begin{array}{l}
\ds{d\,e^{(t-s)A}\Phi_m(Y_n(s,x))=(\la-A)e^{(t-s)A}\Phi_m(Y_n(s,x))\,ds-e^{(t-s)A}B_m(Y_n(s,x))\,ds}\\
\vs
\ds{+e^{(t-s)A}D\Phi_m(Y_n(s,x))\cdot J_n dw(s)+e^{(t-s)A}R_{n,m}(s)\,ds.}
\end{array}\]
This yields
\begin{equation}
\label{dp171}
\begin{array}{l}
\ds{\int_0^te^{(t-s)A}B_m(Y_n(s,x))\,ds=e^{tA}\Phi_m(J_nx)-\Phi_m(Y_n(t,x))}\\
\vs
\ds{+\int_0^t (\la-A)e^{(t-s)A}\Phi_m(Y_n(s,x))\,ds+\int_0^te^{(t-s)A}D\Phi_m(Y_n(s,x))\cdot J_n dw(s)}\\
\vs
\ds{+
\int_0^te^{(t-s)A}R_{n,m}(s)\,ds.}
\end{array}\end{equation}

\begin{Lemma}
\label{lemma6.2}
Under Hypotheses  \ref{H1} and \ref{H2}, if $Y(t,x)$ is a mild solution of \eqref{perturbe1.3}, we
have
\begin{equation}
\label{dp215}
\begin{array}{l}
\ds{\int_0^te^{(t-s)A}B(Y(s,x))\,ds=e^{tA}\Phi(x)-\Phi(Y(t,x))}\\
\vs
\ds{+\int_0^t (\la-A)e^{(t-s)A}\Phi(Y(s,x))\,ds+\int_0^te^{(t-s)A}D\Phi(Y(s,x))\cdot dw(s),}
\end{array}\end{equation}
where $\Phi$ is the unique solution of the elliptic equation
\begin{equation}
\label{dp220}
\la \Phi-\widehat{\L}\Phi-D\Phi\cdot B=B.
\end{equation}
\end{Lemma}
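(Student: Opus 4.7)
The strategy is to pass to the limit in the It\^o-type identity \eqref{dp171}, which is already established at the approximation level of $Y_n = J_n Y$ and of the smoothed pair $(B_m,\Phi_m)$ from Lemmas \ref{l5.2} and \ref{lemma-dp200}. I would take two successive limits: first $n\to\infty$ with $m$ fixed, and then $m\to\infty$ (with $\la$ large enough for Lemma \ref{lemma-dp200} to apply). The order of the limits is essential: for fixed $m$, since $B_m\in C^\infty_b(H,E)\subset C^\theta_b(H,H)$ for every $\theta\in(0,1)$, Theorem \ref{theo5.5} applies to $\Phi_m$ and provides uniformly convergent trace series for $D^2\Phi_m$; the limit object $\Phi$ need not enjoy such trace structure under Hypothesis \ref{H2} alone, which is why the trace terms must be disposed of before $m\to\infty$.

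For the inner limit, one uses \eqref{dp170} to get $Y_n(s,x)\to Y(s,x)$ in $E$ and $J_n x\to x$ in $E$, $\Pro$-a.s. By continuity and uniform boundedness of $\Phi_m$, $D\Phi_m$, and $B_m$, each deterministic term in \eqref{dp171} converges to its natural limit by dominated convergence; for the integral containing $(\la-A)e^{(t-s)A}$, one splits it as $\la\int_0^t e^{(t-s)A}\Phi_m(Y_n(s,x))\,ds+\int_0^t A e^{(t-s)A}\Phi_m(Y_n(s,x))\,ds$ and uses the time regularity of $s\mapsto \Phi_m(Y_n(s,x))$ inherited from $Y$. The stochastic integral converges in $L^2(\Omega;E)$ by It\^o's isometry and the strong convergence $J_n\to I$ on $H$. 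The remainder $R_{n,m}$ breaks into three pieces: $D\Phi_m(Y_n)\cdot[J_n F(Y)-F(Y_n)]$ vanishes in $E$; the trace difference $\sum_i[D^2\Phi_m(Y_n)(J_n e_i,J_n e_i)-D^2\Phi_m(Y_n)(e_i,e_i)]$ vanishes thanks to the uniform-in-$n$ tail estimate of Theorem \ref{theo5.5}.2 for $\Phi_m$ combined with the termwise convergence $J_n e_i\to e_i$; and $D\Phi_m(Y_n)\cdot[J_n B(Y)-B_m(Y_n)]$ tends to $D\Phi_m(Y)\cdot[B(Y)-B_m(Y)]$, which is carried as an extra remainder to the next step. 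The outcome is an intermediate identity of the form \eqref{dp215} with $(\Phi_m,B_m)$ in place of $(\Phi,B)$, augmented by the additional term $\int_0^t e^{(t-s)A}D\Phi_m(Y(s,x))\cdot[B(Y(s,x))-B_m(Y(s,x))]\,ds$.

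For the outer limit $m\to\infty$, Lemma \ref{lemma-dp200} supplies $\Phi_m(x)\to\Phi(x)$ in $E$ and $D\Phi_m(x)\to D\Phi(x)$ in $\mathcal{L}(H,E)$ pointwise, with the uniform bound \eqref{dp169} as a majorant. Dominated convergence handles the deterministic integrals, It\^o's isometry combined with the pointwise convergence of $D\Phi_m$ handles the stochastic integral, and the extra remainder vanishes because $B_m\to B$ in $E$ pointwise by Lemma \ref{l5.2} while $D\Phi_m$ stays equi-bounded. One thus obtains \eqref{dp215}, with $\Phi$ the unique solution of \eqref{dp220}.

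The main obstacle is the cancellation of the trace terms in the inner limit. It requires the two series $\sum_i D^2\Phi_m(Y_n)(J_n e_i,J_n e_i)$ and $\sum_i D^2\Phi_m(Y_n)(e_i,e_i)$ to converge uniformly in $n$, which is supplied by Theorem \ref{theo5.5}.2 only because $B_m$ has been smoothed; an attempt to take the direct limit $n\to\infty$ in \eqref{dp171} with $\Phi$ in place of $\Phi_m$ would leave trace quantities whose convergence is not available under Hypothesis \ref{H2}. A secondary technical point is the interpretation of the singular integral $\int_0^t (\la - A)e^{(t-s)A}\Phi_m(Y_n(s,x))\,ds$, handled via the splitting described above together with the analyticity of $e^{tA}$ on $E$.
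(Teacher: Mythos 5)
Your proposal follows the same route as the paper: the identity is obtained by passing to the limit in \eqref{dp171}, first in $n$ with $m$ fixed (so that the trace series for $D^2\Phi_m$, available through Theorem \ref{theo5.5} precisely because $B_m$ has been smoothed, cancel inside $R_{n,m}$ via \eqref{dp160}), and then in $m$ using Lemma \ref{lemma-dp200} together with the equiboundedness \eqref{dp169}, carrying along the extra remainder $\int_0^t e^{(t-s)A}D\Phi_m(Y(s,x))\cdot[B(Y(s,x))-B_m(Y(s,x))]\,ds$ exactly as the paper does. The only divergence is technical: for the singular term $\int_0^t(\la-A)e^{(t-s)A}\Phi_m(Y_n(s,x))\,ds$ the paper invokes a maximal regularity estimate yielding convergence in $L^2(0,T;H)$, whereas your splitting plus ``time regularity'' argument would require controlling a H\"{o}lder-in-time seminorm of $s\mapsto\Phi_m(Y_n(s,x))-\Phi_m(Y(s,x))$, which is workable but less direct; likewise the stochastic integral is naturally controlled in $L^2(\Omega;H)$ via the It\^{o} isometry and \eqref{dp242}, not in $L^2(\Omega;E)$ as you state.
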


\begin{proof}
We first take the limit in \eqref{dp171} as $n$ goes to infinity and then the limit as $m$ goes to infinity.

\bigskip

{\it Step 1: Limit as $n$ goes to infinity}

\medskip

Due to a maximal regularity result, we have
\[\begin{array}{l}
\ds{\int_0^T\le|\int_0^t (\la-A)e^{(t-s)A}\Phi_m(Y_n(s,x))\,ds-\int_0^t (\la-A)e^{(t-s)A}\Phi_m(Y(s,x))\,ds\r|_H^2\,dt}\\
\vs
\ds{\leq c_\la(T)\int_0^T\le|\Phi_m(Y_n(s,x))-\Phi_m(Y(s,x))\r|_H^2\,ds,}
\end{array}\]
for some constant $c_\la(T)$, independent of $\Phi_m$ and $Y(t,x)$.
Then, according to \eqref{dp170} and \eqref{dp169}, we conclude 
\begin{equation}
\label{dp200}
\lim_{n\to \infty}\int_0^{\cdot} (\la-A)e^{(\cdot-s)A}\Phi_m(Y_n(s,x))\,ds=\int_0^{\cdot} (\la-A)e^{(\cdot-s)A}\Phi_m(Y(s,x))\,ds.
\end{equation}
$\Pro$-a.s. in $L^2(0,T;H)$.

Next, as $\Phi_m \in\,C^{1+\theta}_b(H,H)$, due to Theorem \ref{theo5.5} we have
\[\begin{array}{l}
\ds{\E\le|\int_0^te^{(t-s)A}D\Phi_m(Y_n(s,x))\cdot J_n dw(s)-\int_0^te^{(t-s)A}D\Phi_m(Y(s,x))\cdot J_n dw(s)\r|_H^2}\\
\vs
\ds{=\E\int_0^t\sum_{i=1}^\infty\le|e^{(t-s)A}\le[D\Phi_m(Y_n(s,x))-D\Phi_m(Y(s,x))\r]J_n e_i\r|_H^2\,ds}\\
\vs
\ds{\leq \E\int_0^t\sum_{i=1}^\infty \sum_{j=1}^\infty\le|\le<\le[D\Phi_m(Y_n(s,x))-D\Phi_m(Y(s,x))\r]e_i,e^{(t-s)A}e_j\r>_H\r|^2}\\
\vs
\ds{=\E\int_0^t\sum_{j=1}^\infty e^{-2(t-s)\a_j}\sum_{i=1}^\infty\le|\le<e_i,\le[D\Phi_m(Y_n(s,x))-D\Phi_m(Y(s,x))\r]^\star e_j\r>_H\r|^2}\\
\vs
\ds{\leq c(t)\int_0^t(t-s)^{-\frac 12}\E\,|Y_n(s,x)-Y(s,x)|_H^2\,ds.}

\end{array}\]
As clearly
\[\E\le|\int_0^te^{(t-s)A}D\Phi_m(Y(s,x))\cdot J_n dw(s)-\int_0^te^{(t-s)A}D\Phi_m(Y(s,x))\cdot dw(s)\r|_H^2=0,\]
due to \eqref{dp170} this implies that for any $t \in\,[0,T]$
\begin{equation}
\label{dp201}
\lim_{n\to\infty}\int_0^te^{(t-s)A}D\Phi_m(Y_n(s,x))\cdot J_n dw(s)=\int_0^te^{(t-s)A}D\Phi_m(Y(s,x))\cdot dw(s),
\end{equation}
in $L^2(\Omega;H)$.

Finally, as according to \eqref{dp160} we immediately have
\[\lim_{n\to\infty}R_{n,m}(t)=D\Phi_m(Y(t,x))\cdot\le[B(Y(t,x))-B_m(Y(t,x))\r],\]
from the dominated convergence theorem we have that for any $t \in\,[0,T]$
\begin{equation}
\label{dp202}
\lim_{n\to\infty}\int_0^te^{(t-s)A}R_{n,m}(s)\,ds=\int_0^te^{(t-s)A}D\Phi_m(Y(s,x))\cdot\le[B(Y(s,x))-B_m(Y(s,x))\r]\,ds,
\end{equation}
$\Pro$-a.s. in $H$.

Therefore, collecting together \eqref{dp200}, \eqref{dp201} and \eqref{dp202}, from \eqref{dp169} and \eqref{dp171} we get
\[\begin{array}{l}
\ds{\lim_{n\to \infty}\int_0^{t}e^{(t-s)A}B_m(Y_n(s,x))\,ds=e^{tA}\Phi_m(x)-\Phi_m(Y(t,x))}\\
\vs
\ds{+\int_0^t (\la-A)e^{(t-s)A}\Phi_m(Y(s,x))\,ds+\int_0^te^{(t-s)A}D\Phi_m(Y(s,x))\cdot dw(s)}\\
\vs
\ds{+\int_0^te^{(t-s)A}D\Phi_m(Y(s,x))\cdot\le[B(Y(s,x))-B_m(Y(s,x))\r]\,ds,}
\end{array}\]
in $L^2(0,T;L^2(\Omega;H))$. 

\bigskip

{\it Step 2: Limit as $m$ goes to infinity}

\medskip
By using arguments analogous to those used in the previous step, from Lemma \ref{lemma-dp200} we have
\[\begin{array}{l}
\ds{\int_0^{t}e^{(t-s)A}B(Y(s,x))\,ds=\lim_{m\to \infty}\,\lim_{n\to \infty}\int_0^{t}e^{(t-s)A}B_m(Y_n(s,x))\,ds=e^{tA}\Phi(x)-\Phi(Y(t,x))}\\
\vs
\ds{+\int_0^t (\la-A)e^{(t-s)A}\Phi(Y(s,x))\,ds+\int_0^te^{(t-s)A}D\Phi(Y(s,x))\cdot dw(s),}
\end{array}\]
in $L^2(0,T;L^2(\Omega;H))$. This allows to conclude that \eqref{dp215} holds.

\end{proof}

The previous lemma has provided a nice representation of the {\em bad term}
\[\int_0^t e^{(t-s)A}B(Y(s,x))\,ds,\]
for any mild solution $Y(t,x)$ of equation \eqref{perturbe1.3} and this allows to conclude that pathwise uniqueness holds.

\begin{Theorem}
Under Hypotheses \ref{H1} and \ref{H2}, pathwise uniqueness holds for equation \eqref{perturbe1.3}.
\end{Theorem}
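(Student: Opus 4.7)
The approach is a Zvonkin--Veretennikov type transformation, following the scheme of \cite{DF10} now adapted to the Banach-space setting. Let $Y_1, Y_2$ be two mild solutions of \eqref{perturbe1.3} on the same stochastic basis with the same initial datum $x \in\,E$. Fix $\la > 0$ large enough for Lemma \ref{l54} and Lemma \ref{lemma-dp200} to apply with $G = B$, and denote by $\Phi\in D(\widehat{\L}) \cap C^{2+\a}_b(E,E)$ the unique solution of the elliptic equation \eqref{dp220}. Setting
\[V(t) := Y_1(t,x) - Y_2(t,x),\qquad \Phi_\Delta(t) := \Phi(Y_1(t,x)) - \Phi(Y_2(t,x)),\qquad Z(t) := V(t) + \Phi_\Delta(t),\]
subtraction of the representation identity of Lemma \ref{lemma6.2} for $Y_1$ and for $Y_2$ yields
\[Z(t) = \int_0^t e^{(t-s)A}[F(Y_1(s)) - F(Y_2(s))]\,ds + \int_0^t (\la - A)e^{(t-s)A}\Phi_\Delta(s)\,ds + \int_0^t e^{(t-s)A}[D\Phi(Y_1(s)) - D\Phi(Y_2(s))]\,dw(s).\]
The gain over the original difference equation is that the singular piece $\int_0^t e^{(t-s)A}[B(Y_1) - B(Y_2)]\,ds$ has been absorbed into $\Phi_\Delta$ and into Lipschitz-type contributions: by \eqref{dp242} the new diffusion coefficient is Lipschitz from $E$ to $\L(H)$, and by \eqref{e2.7c} the Lipschitz constants of $\Phi$ and $D\Phi$ can be made as small as desired by enlarging $\la$.

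The plan is to apply It\^o's formula to $|J_n Z(t)|_H^2$, where $J_n = nR(n,A)$, exactly as was done in the derivation of \eqref{dp171}--\eqref{dp215}, and then pass to the limit $n\to\infty$ using the convergence arguments of Lemma \ref{lemma6.2}. In strong form $J_nZ$ satisfies an SDE in $H$ whose drift contains $AJ_nZ + J_n(\la - A)\Phi_\Delta + J_n[F(Y_1) - F(Y_2)]$, up to $n$-dependent errors that vanish in the limit. Applied to $|J_nZ|_H^2$, It\^o produces four terms: the dissipative $2\le<J_nZ, AJ_nZ\r>_H \leq 0$; the $F$-drift contribution, bounded by $2\rho\,|J_nZ|_H^2$ via the one-sided bound \eqref{dp33}; the $(\la - A)\Phi_\Delta$ contribution, to be split as $2\la\le<J_nZ, \Phi_\Delta\r>_H - 2\le<AJ_nZ, J_n\Phi_\Delta\r>_H$, with the second piece paired against the positive term arising from the dissipativity of $A$; and the It\^o correction $\sum_i|J_n[D\Phi(Y_1) - D\Phi(Y_2)]e_i|_H^2$, controlled via the Lipschitz bound \eqref{dp242}. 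Combining everything with an interpolation between the a priori $E_{\e_0}$-regularity of $V(s)$ guaranteed by \eqref{dp161} and its $H$-norm, and using that for large $\la$ the map $\Gamma := I + \Phi$ is a bi-Lipschitz homeomorphism of $E$ (so that $|V|_H$ and $|Z|_H$ are equivalent), Gronwall's lemma forces $Z \equiv 0$, hence $Y_1 = Y_2$ almost surely.

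The main obstacle is the delicate balancing of the role of $\la$: on the one hand, $\la$ must be large to shrink the $C^1$-norm of $\Phi$ so that $\Gamma$ is invertible and the new diffusion is a small perturbation of the identity in the It\^o correction; on the other hand, the same $\la$ reappears multiplicatively in the $\la\Phi_\Delta$ piece of the drift, and these two effects must be reconciled. The rigorous justification of It\^o's formula on the mild form of $Z$ is handled by the same $J_n$--$B_m$ double approximation procedure developed in the proof of Lemma \ref{lemma6.2}, together with Lemma \ref{lemma-dp200} to transition from the approximant $\Phi_m$ to $\Phi$; and the control of the quadratic variation $\sum_i|J_n[D\Phi(Y_1) - D\Phi(Y_2)]e_i|_H^2$ without assuming any extra $C^\theta_b(H,H)$-regularity on $B$ relies on the parabolic regularization $E \to E_{\e_0}$ of the flow.
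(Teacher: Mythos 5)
Your starting point --- subtracting the representation identity \eqref{dp215} for the two solutions, so that the H\"older drift is absorbed into $\Phi$ --- is exactly the paper's strategy, and your remarks on the role of $\lambda$ and on the $J_n$--$B_m$ double approximation are on target. But the second half of your argument, the It\^o energy estimate for $|J_nZ(t)|_H^2$, is not what the paper does, and it breaks down at two specific points.

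First, the quadratic variation. Estimate \eqref{dp242} controls $D\Phi(Y_1)-D\Phi(Y_2)$ only in the operator norm of $\mathcal{L}(H)$, whereas the It\^o correction $\sum_i|J_n[D\Phi(Y_1)-D\Phi(Y_2)]e_i|_H^2$ is a squared Hilbert--Schmidt norm. Since $\|J_n\|_{HS}^2=\sum_k n^2/(n+\alpha_k)^2\sim c\sqrt{n}$ (the eigenvalues $\alpha_k$ of $-A$ grow like $k^2$), the only bound this gives blows up as $n\to\infty$, and the passage to the limit fails. The paper never forms this trace: it estimates the stochastic convolution $I_4$ in the $E$-norm by stochastic factorization, so that the heat kernel sits inside the quadratic variation and $\sum_i|(e^{(s-\sigma)A}Te_i)(\xi)|^2=|T^\star K_{s-\sigma}(\xi,\cdot)|_H^2\leq c\,(s-\sigma)^{-1/2}\|T\|_{\mathcal{L}(H)}^2$ produces an integrable singularity. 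The semigroup smoothing is what makes the trace finite; without it there is no summability.

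Second, the term $\int_0^t(\lambda-A)e^{(t-s)A}\Phi_\Delta(s)\,ds$. You propose to integrate by parts and pair $\langle AJ_nZ,J_n\Phi_\Delta\rangle_H$ against the dissipativity of $A$, but this requires $\Phi_\Delta(s)\in D((-A)^{1/2})$ with norm comparable to that of $Z$. All that is available is \eqref{dp168}: $\Phi$ is Lipschitz from $E$ into the H\"older space $E_\epsilon$, with $\epsilon$ possibly small; an $\epsilon$-H\"older function need not lie in $H^{1/2}$, so the pairing is not even defined, let alone small. The paper handles this term deterministically, again by factorization: the $E_\epsilon$-regularity of $\Phi_\Delta$ converts $(\lambda-A)e^{(s-\sigma)A}$ into an integrable singularity $(s-\sigma)^{-1+\epsilon}$ in $E$. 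Third, and more minor, the one-sided bound \eqref{dp33} gives $\langle F(u)-F(v),u-v\rangle_H\leq\rho|u-v|_H^2$ and therefore applies to the pairing of $F(Y_1)-F(Y_2)$ with $V=Y_1-Y_2$, not with $Z=V+\Phi_\Delta$; the residual pairing with $\Phi_\Delta$ requires the local Lipschitz bound $|F(Y_1)-F(Y_2)|\leq c(1+|Y_1|_E^{2m}+|Y_2|_E^{2m})|Y_1-Y_2|$, usable only after localizing with the stopping time $\tau_R=\inf\{t:\,|Y_1(t)|_E\vee|Y_2(t)|_E\geq R\}$ --- a device your proposal omits entirely. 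The paper's actual proof is a Gronwall argument for $\mathbb{E}\,|Y_1(t\wedge\tau_R,x)-Y_2(t\wedge\tau_R,x)|_E^p$ with $p$ large, carried out wholly in the $E$-norm with each of the four terms treated by (stochastic) factorization; the $H$-energy route you sketch cannot be completed with the estimates the paper establishes.
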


\begin{proof}

Let $Y_1(t,x)$ and $Y_2(t,x)$ be two mild solutions of equation \eqref{perturbe1.3} in $L^2(\Omega;C([0,T];E))$.  Thanks to Lemma \ref{lemma6.2} we have
\[\begin{array}{l}
\ds{Y_1(t,x)-Y_2(t,x)=\Phi(Y_2(t,x))-\Phi(Y_1(t,x))}\\
\vs
\ds{+\int_0^t e^{(t-s)A}\le[F(Y_1(s,x))-F(Y_2(s,x))\r]\,ds}\\
\vs
\ds{
+\int_0^t (\la-A)e^{(t-s)A}\le[\Phi(Y_1(s,x))-\Phi(Y_2(s,x))\r]\,ds}\\
\vs
\ds{+
\int_0^te^{(t-s)A}\le[D\Phi(Y_1(s,x))-D\Phi(Y_2(s,x))\r]\cdot dw(s)=:\sum_{i=1}^4 I_i(t),}\end{array}\]
where $\Phi$ is the unique solution of the elliptic  equation \eqref{dp220}.

Now, for any $R>0$ we denote by $\tau_R$ the stopping time 
\[\tau_R:=\inf \le\{\,t\geq 0,\ |Y_1(t,x)|_E\vee |Y_2(t,x)|_E\geq R\r\},\]
and we define
\[\tau:=\lim_{R\to \infty}\tau_R.\]
Clearly, we have
$\Pro(\tau=T)=1$.

\medskip

{\it Step 1.} According to \eqref{e2.7c}, with $\e=1+\a$, we have
\[|\Phi(x)-\Phi(y)|_E\leq c(\la)|x-y|_E,\]
for some function $c(\la)\downarrow 0$, as $\la\uparrow \infty$. Therefore, for any $p\geq 1$ there exists $\la_p>0$ such that
\begin{equation}
\label{dp221}
|I_1(t)|_E^p\leq \frac 12|Y_1(t,x)-Y_2(t,x)|^p_E,\ \ \ \ t\geq 0,\ \ \ \la\geq \la_p.
\end{equation}

\medskip

{\it Step2.} 
As $F$ is locally Lipschitz-continuous in $E$, for any $R>0$ we have
\[\begin{array}{l}
\ds{|I_2(t\wedge \tau_R)|_E\leq c\int_0^{t\wedge \tau_R}\le(1+|Y_1(s,x)|_E^{2m}+|Y_2(s,x)|_E^{2m}\r)|Y_1(s,x)-Y_2(s,x)|_E\,ds}\\
\vs
\ds{\leq  c\,(1+R^{2m})\int_0^t|Y_1(s\wedge \tau_R,x)-Y_2(s\wedge \tau_R,x)|_E\,ds,}
\end{array}\]
so that, for any $p\geq 1$
\begin{equation}
\label{dp256}
|I_2(t\wedge \tau_R)|_E^p\leq c_{R,p}(t)\int_0^t|Y_1(s\wedge \tau_R,x)-Y_2(s\wedge \tau_R,x)|_E^p\,ds.
\end{equation}

\medskip

{\it Step 3.} By a factorization argument, for any $R>0$ and $\beta \in\,(0,1)$ we have
\[I_3(t\wedge \tau_R)=c_\beta\int_0^{t\wedge \tau_R}(t\wedge \tau_R-s)^{\beta-1}(\la-A)e^{(t\wedge \tau_R-s)A}v_\beta(s)\,ds,\]
where
\[v_\beta(s)=\int_0^s (s-\si)^{-\beta}(\la-A)e^{(s-\si)A}\le[\Phi(Y_1(\si,x))-\Phi(Y_2(\si,x))\r]\,d\si.\]
This implies that for any $p>1/\beta$
\[\begin{array}{l}
\ds{|I_3(t\wedge \tau_R)|^p_E\leq c_{\beta,p}\le(\int_0^{t\wedge \tau_R}(t\wedge \tau_R-s)^{(\beta-1)\frac p{p-1}}\r)^{p-1}\int_0^{t\wedge \tau_R}I_{\{s<\tau_R\}}|v_\beta(s)|^p_E\,ds}\\
\vs
\ds{\leq c_{\beta,p}(t)\int_0^tI_{\{s<\tau_R\}}|v_\beta(s)|^p_E\,ds.}
\end{array}\]
Now, in view of \eqref{dp168}, if we assume $\beta<\e$ and $p>1/\beta\vee 1/(\e-\beta)$, we have
\[\begin{array}{l}
\ds{\int_0^tI_{\{s<\tau_R\}}|v_\beta(s)|^p_E\,ds}\\
\vs
\ds{\leq \int_0^tI_{\{s<\tau_R\}}\le|\int_0^s (s-\si)^{-\beta}(\la-A)e^{(s-\si)A}\le[\Phi(Y_1(\si,x))-\Phi(Y_2(\si,x))\r]\,d\si\r|^p\,ds}\\
\vs
\ds{\leq c_p(\la)\int_0^tI_{\{s<\tau_R\}}\le(\int_0^s (s-\si)^{-\beta-1+\e}\le|\Phi(Y_1(\si,x))-\Phi(Y_2(\si,x))\r|_{E_\e}\,d\si\r)^p\,ds}\\
\vs
\ds{\leq \int_0^t\le(\int_0^s (s-\si)^{-\beta-1+\e}\le|Y_1(\si\wedge \tau_R,x)-Y_2(\si\wedge \tau_R,x)\r|_{E}\,d\si\r)^p\,ds}\\
\vs
\ds{\leq \le(\int_0^ts^{-(1+\beta-\e)\frac p{p-1}}\,ds\r)^{p-1}\int_0^t\le|Y_1(s\wedge \tau_R,x)-Y_2(s\wedge \tau_R,x)\r|^p_{E}\,ds,}
\end{array}\]
so that
\begin{equation}
\label{dp254}
|I_3(t\wedge \tau_R)|^p_E\leq c_{\beta,p,\la}(t)\int_0^t\le|Y_1(s\wedge \tau_R,x)-Y_2(s\wedge \tau_R,x)\r|^p_{E}\,ds,
\end{equation}
for some function $c_{\beta,p,\la}(t)\downarrow 0$, as $t\downarrow 0$.

\medskip

{\it Step 4.} By a stochastic factorization argument, for any $R>0$ and $\beta \in\,(0,1)$ we have
\[I_4(t\wedge \tau_R)=c_\beta\int_0^{t\wedge \tau_R}(t\wedge \tau_R-s)^{\beta-1}e^{(t\wedge \tau_R-s)A}v_\beta(s)\,ds,\]
where
\[v_\beta(s)=\int_0^s (s-\si)^{-\beta}e^{(s-\si)A}\le[D\Phi(Y_1(\si,x))-D\Phi(Y_2(\si,x))\r]\cdot dw(\si).\]
Therefore, if $\e<2\beta$ and $p>2/(2\beta-\e)\vee 1/\e$ we have
\[\begin{array}{l}
\ds{|I_4(t\wedge \tau_R)|^p_E\leq |I_4(t\wedge \tau_R)|^p_{W^{\e,p}(0,1)}}\\
\vs
\ds{\leq c_{\beta,p}\le(\int_0^{t\wedge \tau_R}(t\wedge \tau_R-s)^{\beta-1-\frac \e 2}I_{\{s<\tau_R\}}|v_\beta(s)|_{L^p(0,1)}\,ds\r)^{\frac 1p}}\\
\vs
\ds{\leq c_{\beta,p}\le(\int_0^{t\wedge \tau_R}(t\wedge \tau_R-s)^{(\beta-1-\frac \e 2)\frac p{p-1}}\,ds\r)^{p-1}\int_0^tI_{\{s<\tau_R\}}|v_\beta(s)|^p_{L^p(0,1)}\,ds}\\
\vs
\ds{\leq c_{\beta,p}(t)\int_0^tI_{\{s<\tau_R\}}|v_\beta(s)|^p_{L^p(0,1)}\,ds.}
\end{array}\]
For any $\xi \in\,[0,1]$, we have
\[\begin{array}{l}
\ds{I_{\{s<\tau_R\}}|v_\beta(s,\xi)|^p}\\
\vs
\ds{\leq \le|\sum_{i=1}^\infty \int_0^s (s-\si)^{-\beta} \le(e^{(s-\si)A}\le[D\Phi(Y_1(\si\wedge \tau_R,x))-D\Phi(Y_2(\si\wedge \tau_R,x))\r]e_i\r)(\xi) \,d\beta_i(\si)\r|^p,}
\end{array}\]
then, from the Burkholder-Davies-Gundy inequality we have
\[\begin{array}{l}
\ds{\E\,I_{\{s<\tau_R\}}|v_\beta(s,\xi)|^p\leq c_p\,\E\le(\int_0^s(s-\si)^{-2\beta}\r.}\\
\vs
\ds{\le. \sum_{i=1}^\infty  \le|\le(e^{(s-\si)A}\le[D\Phi(Y_1(\si\wedge \tau_R,x))-D\Phi(Y_2(\si\wedge \tau_R,x))\r]e_i\r)(\xi)\r|^2\,d\si\r)^{\frac p 2}.}
\end{array}\]
Now, thanks to \eqref{dp242} we have
\[\begin{array}{l}
\ds{\sum_{i=1}^\infty  \le|\le(e^{(s-\si)A}\le[D\Phi(Y_1(\si\wedge \tau_R,x))-D\Phi(Y_2(\si\wedge \tau_R,x))\r]e_i\r)(\xi)\r|^2}\\
\vs
\ds{=\sum_{i=1}^\infty \le|\le<\le[D\Phi(Y_1(\si\wedge \tau_R,x))-D\Phi(Y_2(\si\wedge \tau_R,x))\r]e_i,K_{s-\si}(\xi,\cdot)\r>_H\r|^2}\\
\vs
\ds{=\le|\le[D\Phi(Y_1(\si\wedge \tau_R,x))-D\Phi(Y_2(\si\wedge \tau_R,x))\r]^\star K_{s-\si}(\xi,\cdot)\r|_H^2}\\
\vs
\ds{\leq c\,(s-\si)^{-\frac 12}\le|Y_1(\si\wedge \tau_R,x)-Y_2(\si\wedge \tau_R,x)\r|^2_E,}
\end{array}\]
and then
\[\E\,I_{\{s<\tau_R\}}|v_\beta(s)|^p_{L^p(0,1)}\leq c_p\,\E\le(\int_0^s(s-\si)^{-\le(2\beta+\frac 12\r)} \le|Y_1(\si\wedge \tau_R,x)-Y_2(\si\wedge \tau_R,x)\r|^2_E\,d\si\r)^{\frac p 2}.\]
Hence, if $\beta<1/4$ and $p>2/(1-4\beta)$, this implies
\[\begin{array}{l}
\ds{\E\int_0^tI_{\{s<\tau_R\}}|v_\beta(s)|^p_{L^p(0,1)}\,ds\leq c_{\beta,p}(t)\int_0^t\E\,\le|Y_1(s\wedge \tau_R,x)-Y_2(s\wedge \tau_R,x)\r|^p_E\,ds,}
\end{array}\]
so that
\begin{equation}
\label{dp255}
\E|I_4(t\wedge \tau_R)|^p_E\leq c_{\beta,p}(t)\int_0^t\E\,\le|Y_1(s\wedge \tau_R,x)-Y_2(s\wedge \tau_R,x)\r|^p_E\,ds.\end{equation}

\medskip

{\it Step 5. Conclusion.} From \eqref{dp221}, \eqref{dp256}, \eqref{dp254} and \eqref{dp255}, for any $R>0$ and for any  $p$ and $\la$ large enough, we have
\[\begin{array}{l}
\ds{\E\,\le|Y_1(t\wedge \tau_R,x)-Y_2(t\wedge \tau_R,x)\r|_E^p\leq \frac 12 \E\,\le|Y_1(t\wedge \tau_R,x)-Y_2(t\wedge \tau_R,x)\r|_E^p}\\
\vs
\ds{+c_{p,R}(T)\int_0^t\E\,\le|Y_1(s\wedge \tau_R,x)-Y_2(s\wedge \tau_R,x)\r|^p_E\,ds,\ \ \ \ t \in\,[0,T].}
\end{array}\]
This implies that for any fixed $R>0$
\[\E\,\le|Y_1(t\wedge \tau_R,x)-Y_2(t\wedge \tau_R,x)\r|_E^p=0.\]
Therefore, if we take the limit as $R>0$, since $\tau_R\uparrow T$, $\Pro$-a.s. as $R\uparrow +\infty$, we conclude that
\[\E\,\le|Y_1(t,x)-Y_2(t,x)\r|_E^p=0,\ \ \ \  t \in\,[0,T].\]

\end{proof}

\end{document}